\newtheorem{theorem}{Theorem}[section]
\newtheorem{lemma}[theorem]{Lemma}
\newtheorem{proposition}[theorem]{Proposition}
\newtheorem{corollary}[theorem]{Corollary}
\newtheorem{condition}[theorem]{Conditions}
\theoremstyle{remark}
\newtheorem{remark}[theorem]{Remark}
\newtheorem{definition}[theorem]{Definition}
\numberwithin{equation}{section}
\newcommand \id{\mathds 1}
\newcommand {\R} {\mathbb{R}}
	\tikzset{
		set arrow inside/.code={\pgfqkeys{/tikz/arrow inside}{#1}},
		set arrow inside={end/.initial=>, opt/.initial=},
		/pgf/decoration/Mark/.style={
			mark/.expanded=at position #1 with
			{
				\noexpand\arrow[\pgfkeysvalueof{/tikz/arrow inside/opt}]{\pgfkeysvalueof{/tikz/arrow inside/end}}
			}
		},
		arrow inside/.style 2 args={
			set arrow inside={#1},
			postaction={
				decorate,decoration={
					markings,Mark/.list={#2}
				}
			}
		},
	}
\begin{document}

\title[On the number of excursion sets of Planar Gaussian fields]{On the number of excursion sets of Planar Gaussian fields}
\author{Dmitry Beliaev\textsuperscript{1}}
\address{\textsuperscript{1}Mathematical Institute, University of Oxford}
\email{belyaev@maths.ox.ac.uk}
\author{Michael McAuley\textsuperscript{1}}
\email{mcauley@maths.ox.ac.uk}
\author{Stephen Muirhead\textsuperscript{2}}
\address{\textsuperscript{2}Department of Mathematics, King's College London\\
	\emph{Present address: School of Mathematical Sciences, Queen Mary University of London}}
\email{s.muirhead@qmul.ac.uk}
\thanks{The first author was supported by the Engineering \& Physical Sciences Research Council (EPSRC) Fellowship EP/M002896/1. The third author was supported by the EPSRC Grant EP/N009436/1 ``The many faces of random characteristic polynomials''. The authors would like to thank Manjunath Krishnapur for pointing out~\cite{swerling1962statistical} and also Igor Wigman and Mikhail Sodin for useful comments and suggestions.}
\subjclass[2010]{60G60, 60G15, 58K05}
\keywords{Gaussian fields, nodal set, level sets, critical points}


\begin{abstract}
The Nazarov-Sodin constant describes the average number of nodal set components of smooth Gaussian fields on large scales. We generalise this to a functional describing the corresponding number of level set components for arbitrary levels. Using results from Morse theory, we express this functional as an integral over the level densities of different types of critical points, and as a result deduce the absolute continuity of the functional as the level varies. We further give upper and lower bounds showing that the functional is at least bimodal for certain isotropic fields, including the important special case of the random plane wave.
\end{abstract}

\maketitle

\section{Introduction}\label{s:introduction}
\subsection{The Nazarov-Sodin constant}
Let $f: \mathbb{R}^2 \rightarrow \mathbb{R}$ be a continuous stationary planar Gaussian field normalised to have zero mean and unit variance. The \emph{nodal set} of $f$ is the random set
\begin{displaymath}
	\mathcal{N}=\left\{x\in\R^2:f(x)=0\right\} .
\end{displaymath}
Let $\kappa:\R^2\rightarrow[-1,1]$ denote the covariance kernel of $f$, i.e.\ $\kappa(x)=\mathbb{E}[f(x)f(0)]$. We assume throughout that $\kappa$ is $C^{4+}$, which ensures that almost surely $f$ is $C^{2+}$. Since $\kappa$ is positive definite, continuous and $\kappa(0)=1$, by Bochner's theorem there exists a probability measure $\rho$ such that
\begin{equation}\label{e:spectral measure}
	\kappa(x)=\int_{\R^2}e^{i\langle t,x\rangle} \, d\rho(t) ;
\end{equation}
this is known as the \emph{spectral measure} of $f$, and must be Hermitian (that is, $\rho(-A)=\rho(A)$ for all Borel sets $A$). Since the distribution of $f$ is uniquely determined by its covariance function (Kolmogorov's theorem), \eqref{e:spectral measure} shows that the distribution of $f$ is uniquely determined by $\rho$.

The geometric properties of $\mathcal{N}$ are of interest, in part, because in the case that $f$ is a random eigenfunction of the Laplacian they relate to a significant conjecture in the physics literature: the Berry conjecture \cite{berry1977regular}. A summary of this conjecture and other research on this topic may be found in \cite{wigman2012nodal}. One of the main analytical results concerning this set, due to Nazarov and Sodin (\cite{SodinNazarov2015asymptotic} and \cite{sodin2016lectures}), states that the number of components of $\mathcal{N}$ in a large domain scales like the area of the domain. Specifically, if $N_R$ denotes the number of components of $\mathcal{N}$ inside the centred ball of radius $R >0$, then provided $f$ is ergodic, there exists a constant $c_{LS}=c_{LS}(\rho)\geq 0$ such that
\begin{displaymath}
	\frac{N_R}{\pi R^2}\rightarrow c_{LS}
\end{displaymath}
as $R\to\infty$, where convergence occurs almost surely and in $L^1$. Nazarov-Sodin also obtained analogous results in higher dimensions and for Gaussian ensembles on manifolds \cite{SodinNazarov2015asymptotic}. In the case that $f$ is not ergodic, it has been shown \cite{kurlberg2017variation} (under the additional assumption that $\rho$ has compact support) that the expected number of nodal components, scaled by the area, still converges, i.e.\
\begin{displaymath}
	\frac{\mathbb{E}[N_R]}{\pi R^2}\rightarrow c_{LS}
\end{displaymath}
as $R\to\infty$. Further, in \cite{kurlberg2017variation} it was also shown that among fields with compactly supported spectral measures, the constant $c_{LS}$ varies continuously with $\rho$ (in the weak-$*$ topology).

\subsection{The main results}
The first contribution of this paper is to extend the results of Nazarov-Sodin and \cite{kurlberg2017variation} to arbitrary levels. For $u \in \mathbb{R}^2$ and $R > 0$ let $B(u,R)$ be the ball of radius $R$ centred at $u$ and $B(R):=B(0,R)$. Let $\mathcal{N}_\ell=\{x\in\R^2:f(x)=\ell\}$ denote a level set of $f$ and let $N_{LS,R}(\ell)$ be the number of components of $\mathcal{N}_\ell$ contained in $B(R)$ (i.e.\ those which intersect $B(R)$ but not $\partial B(R)$). We consider fields satisfying the following assumptions: 
\begin{condition}\label{Conditions 1}
	The Gaussian field $f$ satisfies:
	\begin{enumerate}
		\item For some $\nu>0$, $f \in C^{2+\nu}_\text{loc}(\R^2)$ almost surely;
		\item $\nabla^2 f(0)$ is a non-degenerate Gaussian vector (here, and later on, we treat $\nabla^2 f$ as a vector of distinct partial derivatives $(f_{xx},f_{xy},f_{yy})$);
		\item For any $t\in\R^2$, if $f(t)-f(0)$ is a non-degenerate Gaussian variable then the Gaussian vector $(f(t)-f(0),\nabla f(t),\nabla f(0))$ is non-degenerate.
	\end{enumerate} 
\end{condition}
We note that these assumptions are quite minimal. The first condition holds if the covariance function $\kappa$ is $C^{4+\epsilon}$ for some $\epsilon>2\nu$, or equivalently when $\int_{\R^2}\lvert\lambda\rvert^{4+\epsilon}\;d\rho(\lambda)<\infty$. The second condition is equivalent to the support of $\rho$ not being contained in the union of two lines through the origin. The third condition holds provided that the support of $\rho$ is not too degenerate; in particular it holds if the support of $\rho$ contains an open set.

\begin{theorem}\label{t:main level}
	Let $f$ satisfy Conditions \ref{Conditions 1}. For each $\ell\in\R$, there exists $c_{LS}(\rho,\ell)\geq 0$ such that
	\begin{displaymath}
		\mathbb{E}[N_{LS,R}(\ell)]=c_{LS}(\rho,\ell)\cdot\pi R^2+O\left(R\right)
	\end{displaymath}
	as $R\to\infty$. The constant implied by the $O(\cdot)$ notation may depend on $\rho$ but is independent of $\ell$. If $f$ is also ergodic, then
	\begin{displaymath}
		\frac{N_{LS,R}(\ell)}{\pi R^2}\rightarrow c_{LS}(\rho,\ell)
	\end{displaymath}
	almost surely and in $L^1$.
\end{theorem}
There is also interest in studying the number of excursion sets of Gaussian fields. Let $N_{ES,R}(\ell)$ denote the number of components of $\{x\in\R^2:f(x)\geq\ell\}$ contained in $B(R)$.
\begin{samepage}
\begin{theorem}\label{t:main excursion}
	Let $f$ satisfy Conditions \ref{Conditions 1}. For each $\ell\in\R$, there exists $c_{ES}(\rho,\ell)\geq 0$ such that
	\begin{displaymath}
		\mathbb{E}[N_{ES,R}(\ell)]=c_{ES}(\rho,\ell)\cdot\pi R^2+O\left(R\right)
	\end{displaymath}
	as $R\to\infty$. The constant implied by the $O(\cdot)$ notation may depend on $\rho$ but is independent of $\ell$. If $f$ is also ergodic, then
	\begin{displaymath}
		\frac{N_{ES,R}(\ell)}{\pi R^2}\rightarrow c_{ES}(\rho,\ell)
	\end{displaymath}
	almost surely and in $L^1$.
\end{theorem}
\end{samepage}
\begin{remark}
	Our use of the domain $B(R)$ in the definitions of $N_{LS,R}(\ell)$ and $N_{ES,R}(\ell)$ is mainly for simplicity, and after minor modifications the proofs of Theorems~\ref{t:main level} and \ref{t:main excursion} go through equally well for rescaled copies of any bounded reference domain $\Omega \subset \mathbb{R}^2$, provided that $\Omega$ is convex and $\partial\Omega$ is piecewise smooth (and probably more generally as well). Moreover the limiting constants $c_{LS}$ and $c_{ES}$ do not depend on $\Omega$ (after replacing the scaling factor $\pi R^2$ by $\text{Area}(\Omega) R^2$).
\end{remark}

\begin{remark}
	Theorem~\ref{t:main excursion} can also be applied to lower excursion sets (the components of $\{x\in\R^2:f(x)\leq\ell\}$) since $\{x\in\R^2:f(x)\leq\ell\}=\{x\in\R^2:-f(x)\geq-\ell\}$ and $f$ has symmetric distribution. Theorem~\ref{t:main excursion} can then be used to prove Theorem~\ref{t:main level} by making use of Euler's formula to show that the number of level set components $N_{LS,R}(\ell)$ is equal to the number of upper and lower excursion set components in $B(R)$ and a bounded error term (see the proof of Lemma~\ref{l:main topological} for details of this argument).
\end{remark}

The symmetry of $f$ along with the observations in the previous remark immediately give the following corollary.
\begin{corollary}\label{c:basic}
	Let $f$ satisfy Conditions \ref{Conditions 1}. Then
	\begin{enumerate}[(1)]
		\item $c_{LS}(\ell)=c_{LS}(-\ell)$ for all $\ell\in\R$,
		\item $c_{LS}(\ell)=c_{ES}(\ell)+c_{ES}(-\ell)$ for all $\ell\in\R$,
		\item $c_{LS}(0)=2c_{ES}(0)$.
	\end{enumerate}
\end{corollary}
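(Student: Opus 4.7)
The three parts follow by combining Theorems~\ref{t:main level} and~\ref{t:main excursion} with two elementary observations: the distributional symmetry $f \stackrel{d}{=} -f$ (since $f$ is a centred Gaussian field), and the topological identity relating level set components to excursion set components in a bounded domain.

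For part (1), I would note that the level set $\mathcal{N}_\ell$ of $f$ coincides as a random set (in distribution) with the level set $\mathcal{N}_{-\ell}$ of $-f$, and $-f$ has the same spectral measure $\rho$ as $f$ since $\rho$ is Hermitian. Consequently $N_{LS,R}(\ell)$ and $N_{LS,R}(-\ell)$ have the same distribution for every $R$. Taking expectations, dividing by $\pi R^2$ and passing to the limit using Theorem~\ref{t:main level} yields $c_{NS}(\rho,\ell) = c_{NS}(\rho,-\ell)$.

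For part (2), I would split $\mathcal{N}_\ell$ into components that bound an upper excursion component, components that bound a lower excursion component, and components that bound both (or equivalently, relate these via the Euler characteristic of excursion sets, as is forecast by the paragraph preceding the corollary and formalised in Lemma~\ref{l:main topological}). Concretely, each component of $\mathcal{N}_\ell$ entirely contained in $B(0,R)$ bounds exactly one excursion component either of $\{f > \ell\}$ or $\{f < \ell\}$ contained in $B(0,R)$, while the discrepancy between counting these in the two ways is controlled by components that meet $\partial B(0,R)$. By the same techniques used to prove the $O(R)$ boundary error in Theorem~\ref{t:main excursion}, this discrepancy is itself $O(R)$, so
\begin{displaymath}
\mathbb{E}[N_{LS,R}(\ell)] = \mathbb{E}[N_{ES,R}(\ell)] + \mathbb{E}[N_{ES,R}^{\text{lower}}(\ell)] + O(R),
\end{displaymath}
where $N_{ES,R}^{\text{lower}}(\ell)$ counts components of $\{f < \ell\}$ in $B(0,R)$. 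Symmetry of $f$ identifies this with $N_{ES,R}(-\ell)$ in distribution, so dividing by $\pi R^2$ and letting $R\to\infty$ via Theorems~\ref{t:main level} and~\ref{t:main excursion} gives $c_{NS}(\rho,\ell) = c_{ES}(\rho,\ell) + c_{ES}(\rho,-\ell)$.

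Part (3) is then immediate: apply part (2) at $\ell=0$ to obtain $c_{NS}(\rho,0) = 2 c_{ES}(\rho,0)$. The only mildly delicate step is the topological accounting in part (2); the rest is genuinely formal manipulation of the limits supplied by the two main theorems. Since the boundary error bound has already been established in proving Theorem~\ref{t:main excursion}, no new estimates are required here.
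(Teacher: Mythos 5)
Your proposal is correct and follows essentially the same route as the paper: the paper derives part (1) from the symmetry $f\stackrel{d}{=}-f$, part (2) from the remark preceding the corollary (lower excursion sets of $f$ are upper excursion sets of $-f$ at level $-\ell$, combined with the Euler-formula accounting of level set components versus excursion set components up to an $O(R)$ boundary term, as carried out in the proof of Lemma~\ref{l:main topological}), and part (3) by setting $\ell=0$ in part (2). No substantive differences.
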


Theorems~\ref{t:main level} and~\ref{t:main excursion} are, in isolation, only a modest improvement on previous results; they could be proven by slightly adapting the analysis in \cite{SodinNazarov2015asymptotic} and \cite{kurlberg2017variation}. The main novelty of our work is to relate the functionals $c_{LS}$ and $c_{ES}$ to the density of critical points of $f$ of different types and at different levels. To state the relationship, we shall require, in particular, a classification of the saddle points of the field into two types:
\begin{definition}\label{d:lower connected aperiodic}
	Let $x_0$ be a saddle point of a function $g:\R^2\to\R$ such that there are no other critical points at the same level as $x_0$. We say that $x_0$ is an \emph{upper connected saddle} if it is in the closure of only one component of $\{x\in\R^2:g(x)>g(x_0)\}$. Similarly, $x_0$ is said to be \emph{lower connected} if it is in the closure of only one component of $\{x\in\R^2:g(x)<g(x_0)\}$.
\end{definition}
We say that a Gaussian field $f$ satisfying Conditions \ref{Conditions 1} is \textit{periodic} if there exists $x\neq 0$ with $\kappa(x)=1$ and is \textit{aperiodic} otherwise. A Gaussian field which is aperiodic almost surely has no two critical points at the same level (see Lemma~\ref{Gaussian fields main theorem}), and we will show that for such fields, all saddle points of $f$ satisfy exactly one of the two conditions in Definition~\ref{d:lower connected aperiodic}. In the case that $f$ is periodic, we will require a more general definition for classifying saddle points as upper or lower connected, which is given in Section~\ref{s:critical points}.

Previous work has shown that, for sufficiently regular isotropic\footnote{A Gaussian field is said to be \textit{isotropic} if its covariance function $\kappa(x)$ can be expressed as a function of $\lvert x\rvert$ where $\lvert \cdot\rvert$ denotes the Euclidean norm.} Gaussian fields, the expected number of local maxima, local minima or saddle points with value in a certain interval can be expressed as the integral of an explicit density function \cite{cheng2015expected,Dennis2007}. In Section~\ref{s:critical points} we prove the following version of this result, which applies to more general Gaussian fields and also isolates the upper and lower connected saddles, but does not explicitly identify the densities. This proposition uses Definition~\ref{d:lower connected general} for upper and lower connected saddle points (which coincides with Definition~\ref{d:lower connected aperiodic} for aperiodic fields).

\begin{proposition}\label{p:density existence}
	Let $f$ satisfy Conditions \ref{Conditions 1}. Then there exist non-negative functions $p_{m^+}$, $p_{m^-}$, $p_{s^+}$, $p_{s^-}$ and $p_s$ on $\R$ such that the following holds. Let $\Omega\subset\R^2$ be compact and $\partial\Omega$ have finite Hausdorff-1 measure. Let $\ell \in \mathbb{R}$ and let $N_{m^+}(\ell)$, $N_{m^-}(\ell)$, $N_{s^+}(\ell)$, $N_{s^-}(\ell)$ and $N_s(\ell)$ denote the number of local maxima, local minima, upper connected saddles, lower connected saddles and saddles of $f$ in $\Omega$ with level above $\ell$ respectively. Then
	\begin{displaymath}
		\mathbb{E}[N_h(\ell)]=\emph{Area}(\Omega)\int_\ell^\infty p_h(x) \, dx
	\end{displaymath}
	for $h=m^+,m^-,s^+,s^-,s$. Furthermore, these functions can be chosen to satisfy the relations $p_{m^+}(x) = p_{m^-}(-x)$, $p_{s^+}(x) = p_{s^-}(-x)$ and $p_{s^-}+p_{s^+}=p_s$, and such that $p_{m^+}$, $p_{m^-}$ and $p_s$ are continuous.\end{proposition}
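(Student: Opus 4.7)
My plan is to handle the ``local'' types $h \in \{m^+, m^-, s\}$ via the Kac--Rice formula, and then to obtain $p_{s^\pm}$ by a Radon--Nikodym argument based on the pointwise bound $\mu_{s^\pm} \le \mu_s$.

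Since $\rho \in \mathcal{P}$ ensures that $f$ is $C^{2+}$ and that $\nabla^2 f(0)$ is non-degenerate Gaussian, the Kac--Rice formula applies. For $h \in \{m^+, m^-, s\}$ the type condition is a local constraint on the Hessian (negative definite, positive definite, or indefinite respectively), so
\[
\mathbb{E}[N_h] = \int_\Omega \mathbb{E}\bigl[|\det \nabla^2 f(x)|\, \mathbf{1}_{E_h}\, \mathbf{1}_{f(x)\ge \ell} \,\big|\, \nabla f(x)=0\bigr]\, p_{\nabla f(x)}(0)\, dx.
\]
Stationarity makes the integrand independent of $x$, producing the prefactor $\mathrm{Area}(\Omega)$. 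Using the independence of $\nabla f(0)$ from $(f(0),\nabla^2 f(0))$ (an elementary consequence of stationarity and evenness of $\kappa$), this inner expectation rewrites as
\[
\int_\ell^\infty p_h(y)\, dy, \qquad p_h(y) := p_{\nabla f(0)}(0)\, p_{f(0)}(y)\, \mathbb{E}\bigl[|\det \nabla^2 f(0)|\,\mathbf{1}_{E_h} \,\big|\, f(0)=y\bigr].
\]
Continuity of $p_h$ then follows from the continuous dependence on $y$ of the conditional Gaussian law of $\nabla^2 f(0)$ given $f(0)=y$, combined with dominated convergence (justified by the Gaussian moments of the entries of $\nabla^2 f(0)$).

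For $p_{s^\pm}$, I would define the set function $\nu_{s^+}(A, \Omega) := \mathbb{E}[\#\{\text{upper connected saddles in } \Omega \text{ with level in } A\}]$ for Borel $A \subset \mathbb R$. Stationarity gives translation invariance in $\Omega$, while the fact that the upper/lower classification of any saddle is a property of the whole field (independent of $\Omega$), combined with almost-sure avoidance of $\partial\Omega$ by critical points, yields finite additivity over disjoint unions. A standard grid-approximation argument then forces $\nu_{s^+}(A,\Omega) = \mu_{s^+}(A)\cdot\mathrm{Area}(\Omega)$ for a Borel measure $\mu_{s^+}$ on $\mathbb R$, and likewise for $\mu_{s^-}$. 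A separate topological argument, essentially the local classification of Morse-type singularities in the plane together with the generalisation covering the periodic case, shows that every saddle is exactly one of upper or lower connected, whence $\mu_{s^+} + \mu_{s^-} = \mu_s$ and in particular $\mu_{s^\pm} \le \mu_s$. Since $\mu_s$ is absolutely continuous with density $p_s$, Radon--Nikodym produces densities $p_{s^\pm}$ with $p_{s^+} + p_{s^-} = p_s$.

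The symmetry relations follow from $-f \stackrel{d}{=} f$: this map sends a local maximum (resp.\ upper connected saddle) at level $y$ to a local minimum (resp.\ lower connected saddle) at level $-y$, so matching the expected counts gives $p_{m^+}(y) = p_{m^-}(-y)$ and $p_{s^+}(y) = p_{s^-}(-y)$, after choosing pointwise representatives. The main obstacle in the plan is the global step for $p_{s^\pm}$: verifying that $\nu_{s^\pm}(A,\cdot)$ scales linearly with area is delicate precisely because the upper/lower classification of a given saddle depends on the field far outside $\Omega$, so the translation-invariance and additivity arguments must be set up carefully, and one must also verify measurability of the classification. Once this measure-theoretic scaling is in place, the remainder reduces to standard Kac--Rice analysis.
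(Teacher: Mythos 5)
Your proposal follows essentially the same route as the paper: Kac--Rice plus Gaussian regression for $p_{m^\pm}$ and $p_s$, domination of the upper/lower connected saddle count by the total saddle count to extract $p_{s^\pm}$ via absolute continuity (equivalently Radon--Nikodym), symmetry from $-f\stackrel{d}{=}f$, and the identity $p_{s^+}+p_{s^-}=p_s$ deferred to the separate topological lemma that almost surely every saddle is exactly one of upper or lower connected. The only point the paper treats that you gloss over is the possible degeneracy of the joint law of $(f(0),\nabla^2 f(0))$ (which occurs, e.g., for the random plane wave, where $\Delta f=-f$); the paper handles this by substituting the a.s.\ linear relation for $f(0)$ before applying Kac--Rice, a minor technicality.
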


The main theorem of the paper gives an explicit expression for the functionals $c_{LS}$ and $c_{ES}$ in terms of the densities introduced in Proposition~\ref{p:density existence}. As a result, we deduce the absolute continuity of these functionals as the level varies. In the case of spectral measures with compact support, we also show the joint continuity of these functionals with respect to both the level and the spectral measure.

For each $f$ satisfying Conditions \ref{Conditions 1} we can associate a spectral measure $\rho$ via \eqref{e:spectral measure}. Let $\mathcal{P}_c$ denote the collection of such measures that are supported in the closure of $B(1)$. By rescaling the axes, the results we state for $\mathcal{P}_c$ can be shown to hold for any spectral measures with compact support.
\begin{theorem}\label{t:integral equality}
	Let $f$ satisfy Conditions \ref{Conditions 1} and let $p_{m^+}$, $p_{m^-}$, $p_{s^+}$, $p_{s^-}$ denote the densities specified in Proposition~\ref{p:density existence}. Then
	\begin{equation}
		\label{e:integral equality1}
		c_{LS}(\rho,\ell)=\int_\ell^\infty p_{m^+}(x)-p_{s^-}(x)+p_{s^+}(x)-p_{m^-}(x) \, dx
	\end{equation}
	and
	\begin{equation}
		\label{e:integral equality2}
		c_{ES}(\rho,\ell)=\int_\ell^\infty p_{m^+}(x)-p_{s^-}(x) \, dx
	\end{equation}
	and hence $c_{LS}$ and $c_{ES}$ are absolutely continuous in $\ell$. In addition $c_{LS}$ and $c_{ES}$ are jointly continuous in $(\rho,\ell)\in\mathcal{P}_c\times\R$ where $\mathcal{P}_c$ is given the weak-$*$ topology.
\end{theorem}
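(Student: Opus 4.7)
The plan is to derive \eqref{e:integral equality2} from a deterministic Morse-theoretic identity at finite scale, obtain \eqref{e:integral equality1} from it by symmetry, and then read off the continuity statements from the resulting integral representations. The key pointwise identity is: for a $C^2$ function $g:B(0,R)\to\mathbb{R}$ that is Morse in the interior, transverse to its boundary values, and has no two critical values coinciding,
\begin{displaymath}
N_{ES,R}(g,\ell) = N_{m^+,R}(g,\ell) - N_{s^-,R}(g,\ell) + E(g,R,\ell),
\end{displaymath}
where $N_{m^+,R}$ and $N_{s^-,R}$ count the local maxima and lower connected saddles of $g$ in $B(0,R)$ at level above $\ell$, and the error $E$ is bounded by the number of critical points and level-set components meeting a unit-width annular neighbourhood of $\partial B(0,R)$. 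This identity comes from scanning $\ell$ from $+\infty$ down to $-\infty$ and observing that crossing a local maximum adds $+1$ to the component count of $\{g>\ell\}$, crossing a lower connected saddle merges two components and contributes $-1$, while upper connected saddles and local minima contribute $0$ (a bounded hole in $\{g>\ell\}$ opens or closes, but the component count is unchanged). Combined with Kac-Rice-type bounds on the expected number of critical points in a boundary strip, this gives $\mathbb{E}|E(f,R,\ell)|=O(R)$ uniformly in $\ell$. Taking expectations, applying Proposition~\ref{p:density existence} with $\Omega=B(0,R)$, and comparing with Theorem~\ref{t:main excursion} as $R\to\infty$ yields \eqref{e:integral equality2}. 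Formula \eqref{e:integral equality1} then follows from Corollary~\ref{c:basic}(2) and the symmetries $p_{m^-}(x)=p_{m^+}(-x)$, $p_{s^+}(x)=p_{s^-}(-x)$ of Proposition~\ref{p:density existence}, together with the global identity $\int_{\mathbb{R}}(p_{m^+}-p_{s^-})\,dx=0$, which is obtained by letting $\ell\to-\infty$ in \eqref{e:integral equality2} and noting that $c_{ES}(\rho,\ell)\to 0$ (for very low $\ell$ every component of $\{f>\ell\}$ is eventually unbounded, so the density of contained components vanishes).

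Absolute continuity of $c_{ES}(\rho,\cdot)$ and $c_{NS}(\rho,\cdot)$ in $\ell$ is immediate from the integral representations, since each $p_h$ lies in $L^1(\mathbb{R})$ (Kac-Rice ensures a finite expected critical-point density whenever $\rho\in\mathcal{P}$). For joint continuity on $\mathcal{P}_c\times\mathbb{R}$ with the weak-$*$ topology on $\mathcal{P}_c$, I would combine two ingredients: (i) a uniform Lipschitz estimate on $\ell\mapsto c_{ES}(\rho,\ell)$ for $\rho\in\mathcal{P}_c$, which follows from uniform $L^\infty$ bounds on the densities $p_{m^+,\rho}$ and $p_{s^-,\rho}$ (the joint Gaussian law of $(f(0),\nabla f(0),\nabla^2 f(0))$ has uniformly bounded variances and a uniformly non-degenerate covariance as $\rho$ ranges over $\mathcal{P}_c$); and (ii) pointwise continuity of $\rho\mapsto c_{ES}(\rho,\ell)$ at each fixed $\ell$, obtained by a coupling argument in the spirit of \cite{kurlberg2017variation} that approximates $f_{\rho_n}$ by $f_\rho$ in $C^2$ on bounded domains with high probability, so that component counts and the lower-connectedness classification of saddles transfer between the two.

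The main obstacle is the Morse-theoretic identity itself: the counting requires careful bookkeeping of boundary effects, in particular distinguishing which saddles of the restriction $g|_{B(0,R)}$ should be classified as upper or lower connected from the perspective of the global field $f$ on $\mathbb{R}^2$, and verifying that these boundary mismatches contribute only an $O(R)$ error in expectation. A related secondary difficulty for joint continuity in $\rho$ is that the lower-connectedness condition defining $p_{s^-}$ is non-local, so (unlike $p_{m^+}$) it is not directly accessible by standard Kac-Rice computations, and its continuity in $\rho$ will need to be handled via the topological coupling argument rather than by analytic manipulation of an explicit density formula.
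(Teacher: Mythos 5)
Your overall route is the same as the paper's: a deterministic Morse-theoretic scanning identity relating $N_{ES,R}$ to counts of local maxima and lower connected saddles with a boundary error, expectations taken via Kac--Rice, and joint continuity obtained by splitting into continuity in $\ell$ (controlled by the expected number of critical points with level in a shrinking interval, uniformly over $\mathcal{P}_c$) and continuity in $\rho$ at fixed level (imported from \cite{kurlberg2017variation} applied to $f_\rho-\ell$). Your derivation of \eqref{e:integral equality1} from \eqref{e:integral equality2} via Corollary~\ref{c:basic}(2), the symmetries of Proposition~\ref{p:density existence}, and the global identity $\int_{\R}(p_{m^+}-p_{s^-})=0$ is a legitimate shortcut (the paper instead proves a level-set version of the counting identity directly, but via the same Euler-formula argument that underlies Corollary~\ref{c:basic}), and the identity $\int_{\R}(p_{m^+}-p_{s^-})=0$ does hold. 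For the uniform Lipschitz bound in step (i) you should bound $p_{s^-}$ by $p_s$, which \emph{is} Kac--Rice accessible; your closing worry about $p_{s^-}$ being non-local is a red herring there.

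Two concrete gaps remain. First, your scanning identity, with its ``each lower connected saddle contributes $-1$, each upper connected saddle contributes $0$'' bookkeeping and its hypothesis that no two critical values coincide, is only valid for aperiodic fields. The class $\mathcal{P}$ contains periodic fields (e.g.\ lattice-supported spectral measures not concentrated on two lines), for which all periodic translates of a saddle sit at the same level and a single saddle level creates order $R^2$ new compact components inside $B(0,R)$ simultaneously; the classification of saddles must then be made on the torus or cylinder and the count recovered by a tiling argument, which is where a substantial part of the paper's Lemma~\ref{l:main topological} lives. Second, your proposed error bound --- critical points and level-set components meeting a unit-width annulus around $\partial B(0,R)$ --- does not control the troublesome saddles: a lower connected saddle can lie deep inside $B(0,R)$ while both of the components it creates escape through $\partial B(0,R)$ (the ``four-arm'' saddles), so it is counted in $N_{s^-,R}$ but produces no contained component. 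These are not localised near the boundary; the paper bounds their number by $3N_{\text{tang}}(\partial B(0,R))$ via a non-trivial injective assignment of such saddles to boundary tangent points (Lemma~\ref{l:four arm}), and separately must show (probabilistically, not deterministically) that almost surely no saddle is simultaneously upper and lower connected. Your sketch correctly identifies this bookkeeping as the main obstacle but does not supply the mechanism that makes the error $O(N_{\text{tang}}(\partial B(0,R)))$.
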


\begin{remark}
	Theorem~\ref{t:integral equality} provides a new tool with which to analyse the Nazarov-Sodin constant. Since the densities $p_{m^+}$, $p_{m^-}$ and $p_s$ are in principle known by the Kac-Rice formula, our result demonstrates that the study of the Nazarov-Sodin constant can be reduced to an analysis of the density $p_{s^-}$ (or, equivalently, $p_{s^+}=p_s-p_{s^-}$), which may be an easier quantity to handle.
\end{remark}

\begin{remark}
	Theorems~\ref{t:main level},~\ref{t:main excursion} and~\ref{t:integral equality} can be generalised to many examples of non-Gaussian stationary random fields, since our proof requires only that the field satisfies certain topological properties almost surely and is sufficiently regular to apply the Kac-Rice formula (see the more general version of our main results stated in Propositions~\ref{NS integral equality} and~\ref{ES integral equality} below).
	
	We also believe Theorems~\ref{t:main level},~\ref{t:main excursion} and~\ref{t:integral equality} could be generalised to higher dimensions, with the analogues of \eqref{e:integral equality1} and \eqref{e:integral equality2} still valid once the saddle points defining $p_{s^+}$ and $p_{s^-}$ are replaced with critical points of index $1$ and $d-1$ respectively (using a more general definition for upper and lower connected saddles). However, since some of the topological arguments in the proof increase in complexity in higher dimensions, in the interest of simplicity we do not pursue this generalisation here.
\end{remark}

Let us mention the key intuition behind the proof of Theorem~\ref{t:integral equality}. This theorem is based on a deterministic relationship between the excursion sets and critical points of sufficiently regular planar functions which is closely related to Morse theory. The excursion set of such a function above a level deforms continuously as the level increases, provided it does not pass through a critical point. In particular, there is no change in the number of components of the excursion set. When passing through a critical point, the topology of the excursion set changes in a way that is predicted by Morse theory and depends on the index of the critical point. For local maxima and local minima, the number of components of the excursion set changes in a consistent way. For saddle points, the change in the number of components is determined by whether it is upper connected or lower connected (see Figure~\ref{Fig_1+2}). Ultimately, Theorem~\ref{t:integral equality} is a probabilisitic expression, in the setting of Gaussian fields, of this deterministic relationship between excursion set components and critical points of various types.
\begin{figure*}[h!]
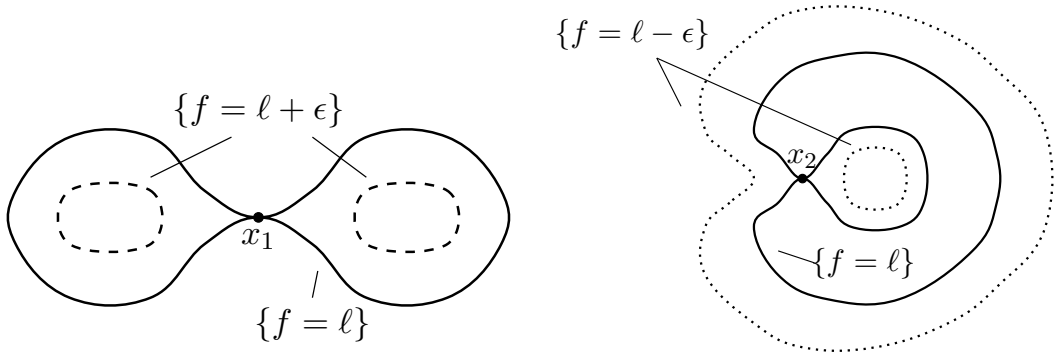

	\centering
	\subfloat{\input{Fig_1.tikz}}
	\subfloat{\input{Fig_2.tikz}}
	\caption{The number of excursion set components increases by one on passing through the lower connected saddle $x_1$ and is constant on passing through the upper connected saddle $x_2$.}\label{Fig_1+2}
\end{figure*}

We also briefly discuss the assumptions required for our results (i.e.\ Conditions \ref{Conditions 1}). The assumption that $f$ is almost surely $C^{2+\nu}$ is necessary to apply the topological arguments that we borrow from Morse theory. The non-degeneracy assumptions on $\nabla^2 f(0)$ and $(f(t)-f(0),\nabla f(t),\nabla f(0))$ are used with the Kac-Rice theorem (see Section~\ref{s:proof main results}) to show certain non-degeneracy properties of $f$, which are again necessary for our topological arguments.

It is natural, therefore, to ask whether our results still apply when these assumptions are weakened. While we suspect that this is true, we are not able to show it with our methods. On the other hand, for a certain special case of non-trivial degenerate field -- namely, the case where the spectral measure $\rho$ is supported on at most five points -- we are able to give a complete description of $c_{LS}$ and $c_{ES}$, which in particular shows that the main results hold also in this case (see Section~\ref{s:degen} below).

\subsection{Bounds on $c_{LS}$ and $c_{ES}$}
\label{s:bounds}
It is possible to bound the expected number of excursion sets or level sets of stationary Gaussian fields using local estimates (i.e.\ estimates which depend only on the derivatives of $\kappa$ at the origin). Here we outline how these estimates apply to the functionals $c_{ES}$ and $c_{LS}$ and how they can be better characterised by making use of our results.

\begin{corollary}\label{c:isotropic bounds 1}
	Let $f$ be a Gaussian field satisfying Conditions \ref{Conditions 1} with covariance function $\kappa$. Then for $\ell\in\R$
	\begin{equation}\label{e:c_ES difference}
		c_{ES}(\ell)-c_{ES}(-\ell)=\sqrt{\det\nabla^2\kappa(0)}\:\frac{\ell}{2\pi}\: \phi(\ell)
	\end{equation}
	where $\phi$ denotes the standard normal probability density function.
\end{corollary}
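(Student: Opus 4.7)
The plan is to combine the integral representation of $c_{ES}$ from Theorem~\ref{t:integral equality} with a Morse-theoretic identification of the Euler characteristic of planar excursion sets, and then evaluate the resulting Euler characteristic density in closed form via a Kac-Rice computation (equivalently, the two-dimensional Adler-Taylor Gaussian kinematic formula).

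First, combining~\eqref{e:integral equality2} with the symmetry relations $p_{m^+}(x)=p_{m^-}(-x)$ and $p_{s^-}(x)=p_{s^+}(-x)$ from Proposition~\ref{p:density existence}, I would write
\begin{displaymath}
c_{ES}(\ell)-c_{ES}(-\ell)=-\int_{-\ell}^{\ell}(p_{m^+}-p_{s^-})(x)\,dx;
\end{displaymath}
the substitution $x\mapsto -x$ shows that this same integral equals $-\int_{-\ell}^{\ell}(p_{m^-}-p_{s^+})(x)\,dx$ as well, and averaging the two expressions together with $p_{s^-}+p_{s^+}=p_s$ gives the symmetric form
\begin{displaymath}
c_{ES}(\ell)-c_{ES}(-\ell)=\tfrac{1}{2}\int_{-\ell}^{\ell}(p_s-p_{m^+}-p_{m^-})(x)\,dx.
\end{displaymath}

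Next I would recognise the integrand as minus the derivative of the expected per-unit-area Euler characteristic of the upper excursion set. By elementary Morse theory, on a compact planar domain $\Omega$ with no critical values on $\partial\Omega$, $\chi(\{f\ge\ell\}\cap\Omega)=N_{m^+}-N_s+N_{m^-}$ up to a boundary correction of order $|\partial\Omega|$. Applying this on $B(0,R)$, taking expectations, dividing by $\pi R^2$, letting $R\to\infty$ and invoking Proposition~\ref{p:density existence} identifies the per-unit-area expected Euler characteristic of $\{f\ge\ell\}$ as $\chi_\rho(\ell):=\int_\ell^\infty(p_{m^+}-p_s+p_{m^-})(x)\,dx$, so the previous display reduces to $c_{ES}(\ell)-c_{ES}(-\ell)=\tfrac{1}{2}(\chi_\rho(\ell)-\chi_\rho(-\ell))$.

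Finally, Kac-Rice evaluates $\chi_\rho$ in closed form. Using stationarity, the independence of $\nabla^2 f(0)$ from $\nabla f(0)$ and the conditional mean $\mathbb{E}[\nabla^2 f(0)\mid f(0)=\ell]=\ell\,\nabla^2\kappa(0)$, a direct computation yields $\mathbb{E}[\det\nabla^2 f(0)\mid\nabla f(0)=0,f(0)=\ell]=(\ell^2-1)\det\nabla^2\kappa(0)$; combining this with the Kac-Rice prefactor and the antiderivative identity $\int_\ell^\infty(x^2-1)\phi(x)\,dx=\ell\phi(\ell)$ produces a closed-form expression for $\chi_\rho$ proportional to $\ell\phi(\ell)$, which is odd in $\ell$. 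Oddness then collapses $\tfrac{1}{2}(\chi_\rho(\ell)-\chi_\rho(-\ell))$ to $\chi_\rho(\ell)$, yielding the stated identity. The main technical obstacle lies in the passage to the per-unit-area limit in Step~2: one must show that the $O(|\partial B(0,R)|)=O(R)$ boundary corrections in the Morse identity are negligible after taking expectations on $B(0,R)$, which follows from a Kac-Rice estimate on the circle $\partial B(0,R)$ of the same type as those already developed in the proof of Theorems~\ref{t:main level} and~\ref{t:main excursion}.
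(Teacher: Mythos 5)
Your route is essentially the paper's: both arguments reduce $c_{ES}(\ell)-c_{ES}(-\ell)$ via \eqref{e:integral equality2} and the symmetries of Proposition~\ref{p:density existence} to the alternating sum $\int_\ell^\infty (p_{m^+}-p_s+p_{m^-})(x)\,dx=\mathbb{E}(N_{m^+}-N_s+N_{m^-})$ on a unit-area domain, and then evaluate that quantity by Kac--Rice; the paper simply cites Lemma~11.7.1 of \cite{RFG} for the evaluation, whereas you redo the computation by hand. One comment on efficiency: your Step~2 (realising the integrand as the derivative of a per-unit-area Euler characteristic obtained as an $R\to\infty$ limit over $B(0,R)$, with attendant boundary corrections) is an unnecessary detour. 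Proposition~\ref{p:density existence} already identifies $\int_\ell^\infty(p_{m^+}-p_s+p_{m^-})$ with $\mathbb{E}(N_{m^+}-N_s+N_{m^-})$ on any \emph{fixed} compact domain of unit area, so no limit and no boundary control are required; the ``main technical obstacle'' you flag does not arise. (The Euler characteristic interpretation is exactly what the paper relegates to a side remark.)

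The one genuine gap is that you never actually compute the constant, which is the entire content of \eqref{e:c_ES difference}: you stop at ``proportional to $\ell\phi(\ell)$'' and then assert the stated identity. Completing your own calculation, the Kac--Rice prefactor is $p_{\nabla f(0)}(0)=\bigl(2\pi\sqrt{\det\nabla^2\kappa(0)}\bigr)^{-1}$ (note $\det(-\nabla^2\kappa(0))=\det\nabla^2\kappa(0)$ in two dimensions), so together with your (correct) identity $\mathbb{E}[\det\nabla^2f(0)\mid f(0)=x]=(x^2-1)\det\nabla^2\kappa(0)$ and $\int_\ell^\infty(x^2-1)\phi(x)\,dx=\ell\phi(\ell)$, one obtains $(\det\nabla^2\kappa(0))^{1/2}\,\tfrac{\ell}{2\pi}\,\phi(\ell)$ --- the square root of the determinant, not the determinant itself. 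This matches Lemma~11.7.1 of \cite{RFG} and is consistent with the paper's explicit isotropic densities: for the random plane wave one has $\nabla^2\kappa(0)=-\tfrac12 I$ and $p_{m^+}+p_{m^-}-p_s=\tfrac{1}{4\pi}(x^2-1)\phi(x)$, giving $\tfrac{1}{4\pi}\ell\phi(\ell)=(\det\nabla^2\kappa(0))^{1/2}\tfrac{\ell}{2\pi}\phi(\ell)$ rather than $\det\nabla^2\kappa(0)\tfrac{\ell}{2\pi}\phi(\ell)$. So the exponent in the printed formula \eqref{e:c_ES difference} appears to be a typo, and finishing your computation would have exposed it; as written, your proposal neither verifies the stated constant nor detects the discrepancy. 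The oddness argument at the end, which collapses $\tfrac12(\chi_\rho(\ell)-\chi_\rho(-\ell))$ to $\chi_\rho(\ell)$, is fine.
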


\begin{remark}
	Since $-\nabla^2\kappa(0)$ is the covariance matrix of $\nabla f(0)$, this result implies that the difference $c_{ES}(\ell)-c_{ES}(-\ell)$ depends only on the covariance of $\nabla f(0)$ and not on the covariance of higher order derivatives of
	$f$.
\end{remark}

\begin{remark}
	This result was proven for isotropic fields in \cite{swerling1962statistical} using a winding number calculation. We prove this result using Theorem \ref{t:integral equality} which simplifies the calculation in the non-isotropic case and also highlights an interesting identity. Specifically, the proof is as follows: substituting~\eqref{e:integral equality2} into the left hand side of~\eqref{e:c_ES difference} and using the symmetries $p_{m^+}(x) = p_{m^-}(-x)$, $p_{s^+}(x) = p_{s^-}(-x)$ and $p_{s^-}+p_{s^+}=p_s$, we see that
	\begin{align*}
		c_{ES}(\ell)-c_{ES}(-\ell)&=\int_\ell^\infty p_{m^+}(x)-p_s(x)+p_{m^-}(x)\;dx\\
		&=\mathbb{E}(N_{m^+}(\ell)-N_s(\ell)+N_{m^-}(\ell))
	\end{align*}
	where $N_{m^+}(\ell)$, $N_s(\ell)$ and $N_{m^-}(\ell)$ are the number of critical points above level $\ell$ as defined in Proposition \ref{p:density existence} for $\Omega=B(1/\sqrt{\pi})$. Lemma 11.7.1 of \cite{RFG} states that this alternating sum is precisely the right hand side of~\eqref{e:c_ES difference}.
	
	The alternating sum of critical points of a function of different indices above a certain level can be used to calculate the Euler characteristic of the excursion set of the function above the level (see Chapter 9 of \cite{RFG}). When working on finite subsets of the plane, boundary effects must be considered, but these become negligible as the area of the subset increases. Formally, if we let $\varphi(A)$ denote the Euler characteristic of a set $A$, then Theorem 11.7.2 of \cite{RFG} gives an expression for the expected Euler characteristic of an excursion set on a cube (including boundary effects) which implies that
	\begin{displaymath}
		c_{ES}(\ell)-c_{ES}(-\ell)=\lim_{R\to\infty}\frac{1}{R^2}\mathbb{E}\left(\varphi\left(\{x\in[0,R]^2 : f(x)\geq\ell\}\right)\right)
	\end{displaymath}
\end{remark}

Corollary~\ref{c:isotropic bounds 1} immediately gives a lower bound for $c_{ES}(\ell)$ since $c_{ES}(-\ell)\geq 0$. We now consider an upper bound on $c_{LS}$. This is most easily formulated when $f$ is an isotropic Gaussian field. In this case its covariance function may be expressed as $\kappa(x)=K(\lvert x\rvert)$. It is shown in \cite{cheng2015expected} that 
\begin{equation}\label{e:isotropic parameters}
	\lambda:=\frac{-\sqrt{3}K^{(2)}(0)}{\sqrt{K^{(4)}(0)}} \in \left(0, \sqrt{2}\right] \quad \text{and} \quad  \eta^2:=\frac{-6K^{(2)}(0)}{K^{(4)}(0)} \in [0, \infty)
\end{equation}
parameterise the critical point densities $p_{m^+}$, $p_{m^-}$ and $p_s$ of isotropic fields in any dimension. It can also be shown that, for planar isotropic fields, if $\lambda=\sqrt{2}$ then $\kappa(x)=J_0(\sqrt{8/\eta^2}\:\lvert x\rvert)$, where $J_0$ is the $0$-th Bessel function. When $\eta^2=8$, this particular field is known as the random plane wave (hereafter abbreviated to RPW), and is an object of great interest as it is the subject of the Berry conjecture \cite{wigman2012nodal}.

\begin{proposition}[\cite{swerling1962statistical}]\label{p:isotropic bounds 2}
	Let $f$ be an isotropic Gaussian field satisfying Conditions \ref{Conditions 1} with covariance function $\kappa(x)=K(\lvert x\rvert)$. Then for $\ell\geq 0$
	\begin{equation}\label{e:c_NS upper bound}
		c_{LS}(\ell)\leq \frac{\lambda^2}{\pi\eta^2}\:\phi(\ell)\left(\frac{2\sqrt{3-\lambda^2}}{\lambda}\:\phi\left(\lambda\ell/\sqrt{3-\lambda^2}\right)+\ell\left(2\Phi\left(\lambda\ell/\sqrt{3-\lambda^2}\right)-1\right)\right) .
	\end{equation}
	where $\phi$ and $\Phi$ denote the standard normal probability density and cumulative density functions respectively.
	
	In particular, if $\lambda^2>\frac{6e}{2e+\pi}\approx 1.9$ then by Corollary \ref{c:isotropic bounds 1}, $c_{LS}(0)<c_{LS}(1)$ so that $c_{LS}$ is at least bimodal (that is, it has at least two local maxima). 
\end{proposition}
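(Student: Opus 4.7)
The proof naturally splits into two parts: establishing the explicit upper bound \eqref{e:c_NS upper bound} and then deducing bimodality from it together with Corollary~\ref{c:isotropic bounds 1}.

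For the upper bound, I would combine Theorem~\ref{t:integral equality} and Corollary~\ref{c:basic}(2) to write $c_{NS}(\ell) = c_{ES}(\ell)+c_{ES}(-\ell)$, and then drop the non-negative $p_{s^-}$ contribution in each $c_{ES}$ integral, yielding
\[
c_{NS}(\ell) \;\leq\; \int_\ell^\infty p_{m^+}(x)\,dx \;+\; \int_{-\ell}^\infty p_{m^+}(x)\,dx.
\]
For a planar isotropic Gaussian field with parameters $\lambda,\eta^2$, \cite{cheng2015expected} supplies an explicit Kac--Rice expression for $p_{m^+}(u)$ as the expectation of $|\det H|\,\id_{H<0}$ against the conditional Gaussian law of the Hessian $H=\nabla^2 f(0)$ given $f(0)=u$ and $\nabla f(0)=0$, times the appropriate Gaussian densities of $f(0)$ and $\nabla f(0)$. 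Substituting this representation into the display and carrying out the resulting Gaussian integrals (over the negative-definite cone of symmetric $2\times 2$ matrices and then in $u$) produces precisely the right-hand side of \eqref{e:c_NS upper bound}; this amounts to a re-derivation of Swerling's calculation \cite{swerling1962statistical} via a slightly different route.

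For the bimodality claim, I would observe that $c_{NS}$ is continuous in $\ell$ by Theorem~\ref{t:integral equality}, symmetric about $\ell=0$ by Corollary~\ref{c:basic}(1), and vanishes as $|\ell|\to\infty$ since the RHS of \eqref{e:c_NS upper bound} decays like $\ell\phi(\ell)$. Under these three properties, to produce two local maxima it suffices to exhibit a single $\ell_0>0$ with $c_{NS}(\ell_0) > c_{NS}(0)$: symmetry and decay then force a local maximum in each of $(0,\infty)$ and $(-\infty,0)$. I would take $\ell_0 = 1$ and sandwich. Since $c_{ES}(-1)\geq 0$, Corollary~\ref{c:isotropic bounds 1} gives
\[
c_{NS}(1) \;\geq\; c_{ES}(1)-c_{ES}(-1) \;=\; \tfrac{1}{2\pi}\,\det\nabla^2\kappa(0)\,\phi(1),
\]
and for the isotropic field the determinantal factor is an explicit elementary function of $\lambda$ and $\eta$. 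Meanwhile, setting $\ell=0$ in \eqref{e:c_NS upper bound}, with $\phi(0)=(2\pi)^{-1/2}$ and $2\Phi(0)-1=0$, gives $c_{NS}(0) \leq \lambda\sqrt{3-\lambda^2}/(\pi^2\eta^2)$. The inequality $c_{NS}(1)>c_{NS}(0)$ then reduces, after the $\eta$-factors cancel, to the one-variable algebraic inequality $\lambda^2(2e+\pi) > 6e$, i.e.\ $\lambda^2 > 6e/(2e+\pi)$.

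The main technical obstacle is the Kac--Rice computation of $p_{m^+}$ and the subsequent integration producing \eqref{e:c_NS upper bound}: this is standard in the Gaussian-field literature but lengthy, involving the Gaussian integral of $|\det H|$ over the negative-definite cone conditioned on the field value. Once that explicit form is in hand, the remainder of the argument, including the bimodality step, is a short analytic and algebraic comparison. The conceptual novelty relative to \cite{swerling1962statistical}, who used a winding-number argument, is the reduction via Theorem~\ref{t:integral equality} to the single density $p_{m^+}$.
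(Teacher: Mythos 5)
There is a genuine gap in your derivation of the upper bound \eqref{e:c_NS upper bound}. Dropping the $p_{s^-}$ terms from $c_{ES}(\ell)+c_{ES}(-\ell)$ does give a valid bound, namely $c_{NS}(\ell)\le\int_\ell^\infty p_{m^+}(x)\,dx+\int_{-\ell}^\infty p_{m^+}(x)\,dx$, but this is \emph{not} equal to the right-hand side of \eqref{e:c_NS upper bound}; the paper states explicitly (in the remark following the proposition) that the bound obtained from the critical point densities is strictly larger than \eqref{e:c_NS upper bound} at every level. You can see this concretely at $\ell=0$ for the random plane wave ($\lambda=\sqrt2$): using the explicit density $p_{m^+}(x)=\tfrac{\sqrt2}{\pi^{3/2}\eta^2}\bigl((x^2-1)e^{-x^2/2}+e^{-3x^2/2}\bigr)\id_{x\ge0}$ one gets $2\int_0^\infty p_{m^+}=2/(\sqrt3\,\pi\eta^2)\approx0.368/\eta^2$, whereas \eqref{e:c_NS upper bound} at $\ell=0$ equals $\lambda\sqrt{3-\lambda^2}/(\pi^2\eta^2)=\sqrt2/(\pi^2\eta^2)\approx0.143/\eta^2$. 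So no Gaussian integration will turn your display into \eqref{e:c_NS upper bound}. The bound actually comes from Swerling's ``flip point'' argument, which is a different idea: fix a direction $u$; every compact component of $\{f=\ell\}$ contains at least two points $t$ with $\partial_u f(t)=0$, hence $N_{LS,R}(\ell)\le\tfrac12\#\{t\in B(0,R):f(t)=\ell,\ \partial_u f(t)=0\}$, and the Kac--Rice formula for this two-equation point process yields precisely the right-hand side of \eqref{e:c_NS upper bound}. This matters downstream: with your weaker bound $c_{NS}(0)\le0.368/\eta^2$ the comparison against the lower bound for $c_{NS}(1)$ fails even for $\lambda=\sqrt2$, so the bimodality conclusion would not follow from your version.

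Your bimodality step is structurally sound (symmetry, continuity and decay plus $c_{NS}(1)>c_{NS}(0)$ do force a local maximum on each side of the origin, since the global maximum is then attained at some $\pm\ell^*\ne0$), and the threshold $6e/(2e+\pi)$ does indeed emerge from comparing $c_{ES}(1)-c_{ES}(-1)$ with \eqref{e:c_NS upper bound} at $\ell=0$. But the asserted cancellation of the $\eta$-factors needs care: the comparison only closes if the prefactor in Corollary~\ref{c:isotropic bounds 1} is $-K^{(2)}(0)=2\lambda^2/\eta^2$ (i.e.\ $\sqrt{\det\nabla^2\kappa(0)}$, which is what the Euler characteristic formula of \cite{RFG} actually gives and what matches the large-$\ell$ asymptotics of \eqref{e:c_NS upper bound}); taking $\det\nabla^2\kappa(0)=(K^{(2)}(0))^2=4\lambda^4/\eta^4$ literally, the lower bound scales as $\eta^{-4}$ against an upper bound scaling as $\eta^{-2}$ and nothing cancels. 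With the correct prefactor the inequality $\lambda^2\phi(1)/(\pi\eta^2)>2\lambda\sqrt{3-\lambda^2}\,\phi(0)^2/(\pi\eta^2)$ squares to exactly $\lambda^2>6e/(2e+\pi)$.
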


\begin{remark}
	This result is proven in \cite{swerling1962statistical} using the method of `flip points'. Specifically, for any fixed direction $u$, the number of level set components at level $\ell$ in a finite region is bounded above by half the number of points $t$ in the region such that $f(t)=\ell$ and $\partial_u f(t)=0$ where $\partial_u$ denotes the partial derivative in the direction $u$. The expected number of such points can be computed using the Kac-Rice formula. For isotropic fields, the choice of direction $u$ is irrelevant. This method could also be applied to non-isotropic fields, and the bound could be optimised over the direction $u$, but we omit this for simplicity.
	
	It is also possible to construct an upper bound on $c_{LS}$ using the inequality $0\leq p_{s^-}\leq p_s$ and the densities $p_{m^+}$, $p_s$ and $p_{m^-}$, which are explicitly known for isotropic fields. However this bound is larger than the bound in~\eqref{e:c_NS upper bound} at all levels $\ell$.
\end{remark}

\begin{remark}
	Figure~\ref{Fig_3+4} shows lower and upper bounds for $c_{ES}$ and $c_{LS}$ for the RPW based on~\eqref{e:c_ES difference}, ~\eqref{e:c_NS upper bound} and the equality $c_{LS}(\ell)=c_{ES}(\ell)+c_{ES}(-\ell)$ (recall that $\lambda = \sqrt{2}$ and $\eta^2 = 8$ in this case). Although these bounds are not particularly tight for $\ell$ close to $0$, they quickly become accurate as $\lvert\ell\rvert$ increases. For $\ell\geq 1$ both the upper and lower bounds on $c_{LS}(\ell)$ are within 5.1\% of the true value (since the upper bound is within 5.1\% of the lower bound) while for $\ell\geq 1.5$ both bounds are within 0.6\% of the true value.
	
	For fields with lower values of $\lambda$, the percentage difference between the upper and lower bounds on $c_{LS}(\ell)$ is a bit larger. Figure~\ref{Fig_5+6} shows the corresponding bounds on $c_{ES}$ and $c_{LS}$ for the Bargmann-Fock field: the centred, planar Gaussian field with covariance kernel $\kappa(x)=\exp(-\lvert x\rvert^2/2)$, for which $\lambda=1$ and $\eta^2=2$. In this case for $\ell\geq 1$ the upper and lower bounds on $c_{LS}(\ell)$ are within 40\% of the true value, and the corresponding accuracies for $\ell\geq 1.5$, $\ell\geq 2$ and $\ell\geq 2.5$ are 14\%, 5\% and 2\% respectively.
\end{remark}

\begin{figure*}[h!]
	\centering
	\subfloat[\label{Fig_3}]{\includegraphics[scale=0.5]{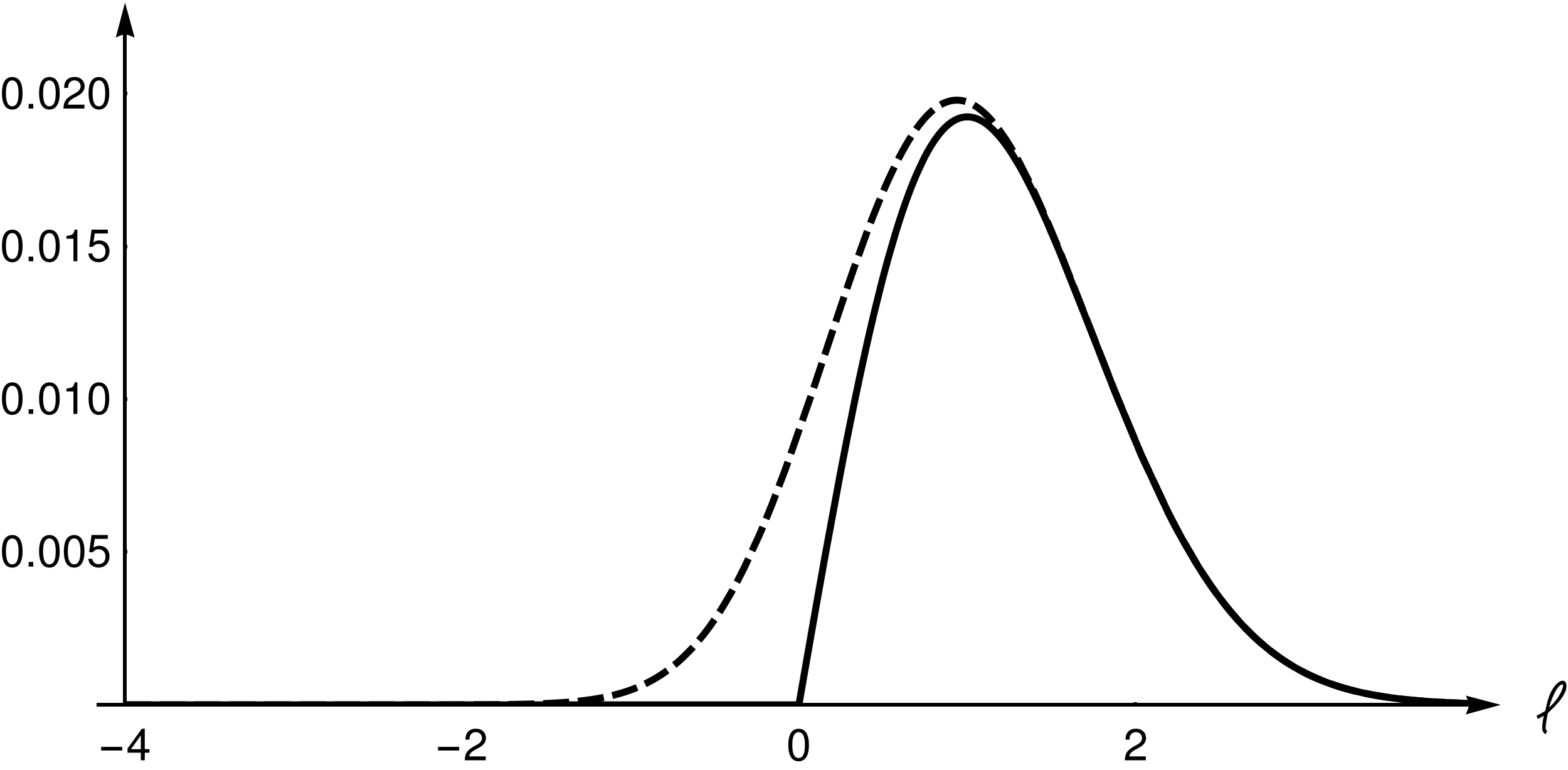}}\qquad
	\subfloat[\label{Fig_4}]{\includegraphics[scale=0.5]{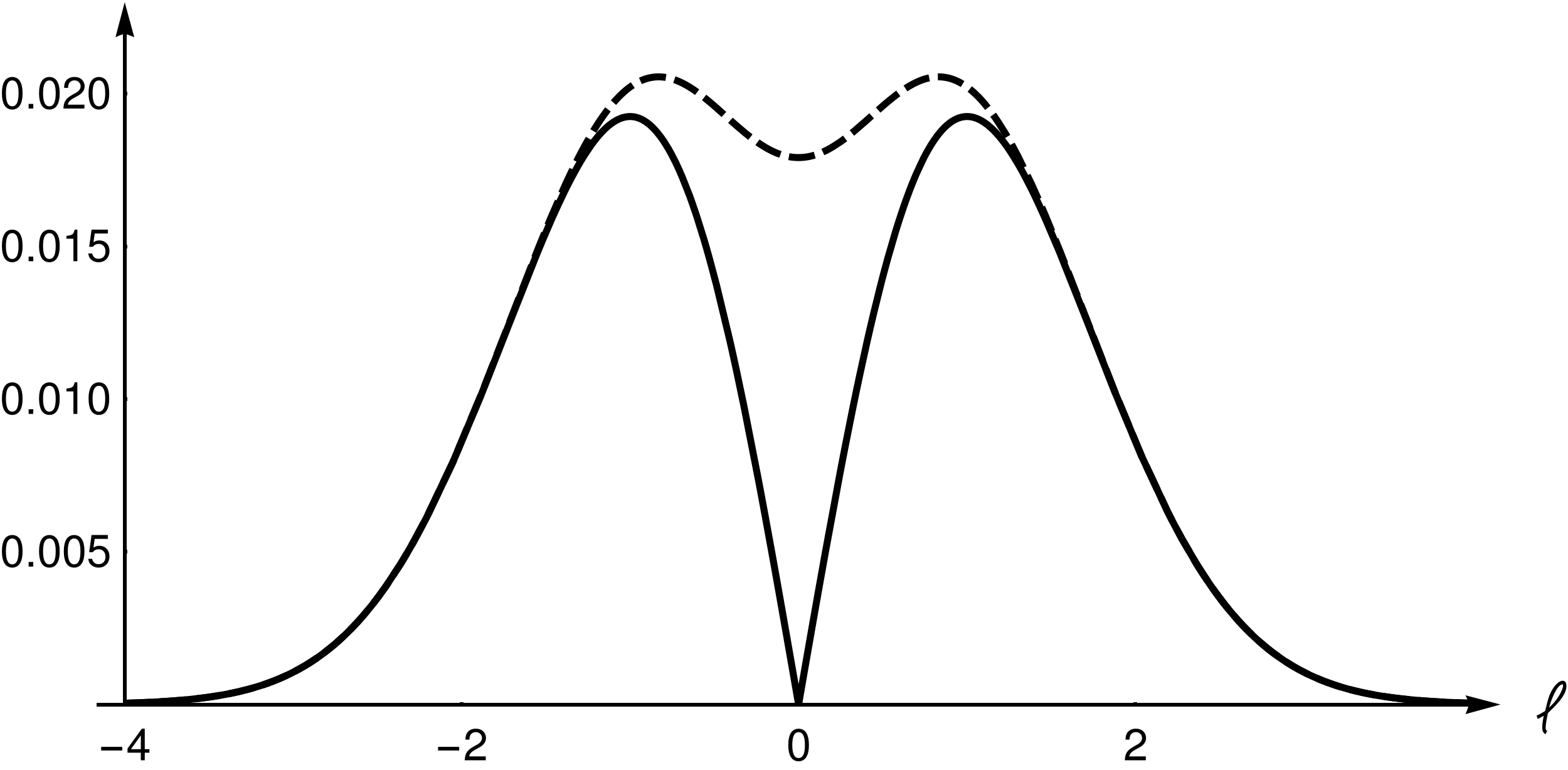}}
	\caption{Subfigures~\ref{Fig_3} and~\ref{Fig_4} show lower bounds (solid) and upper bounds (dashed) for $c_{ES}(\rho,\ell)$ and $c_{LS}(\rho,\ell)$ respectively in the RPW case, for which $\lambda = \sqrt{2}$ and $\eta^2 = 8$.}\label{Fig_3+4}
\end{figure*}

\begin{figure*}[h!]
	\centering
	\subfloat[\label{Fig_5}]{\includegraphics[scale=0.5]{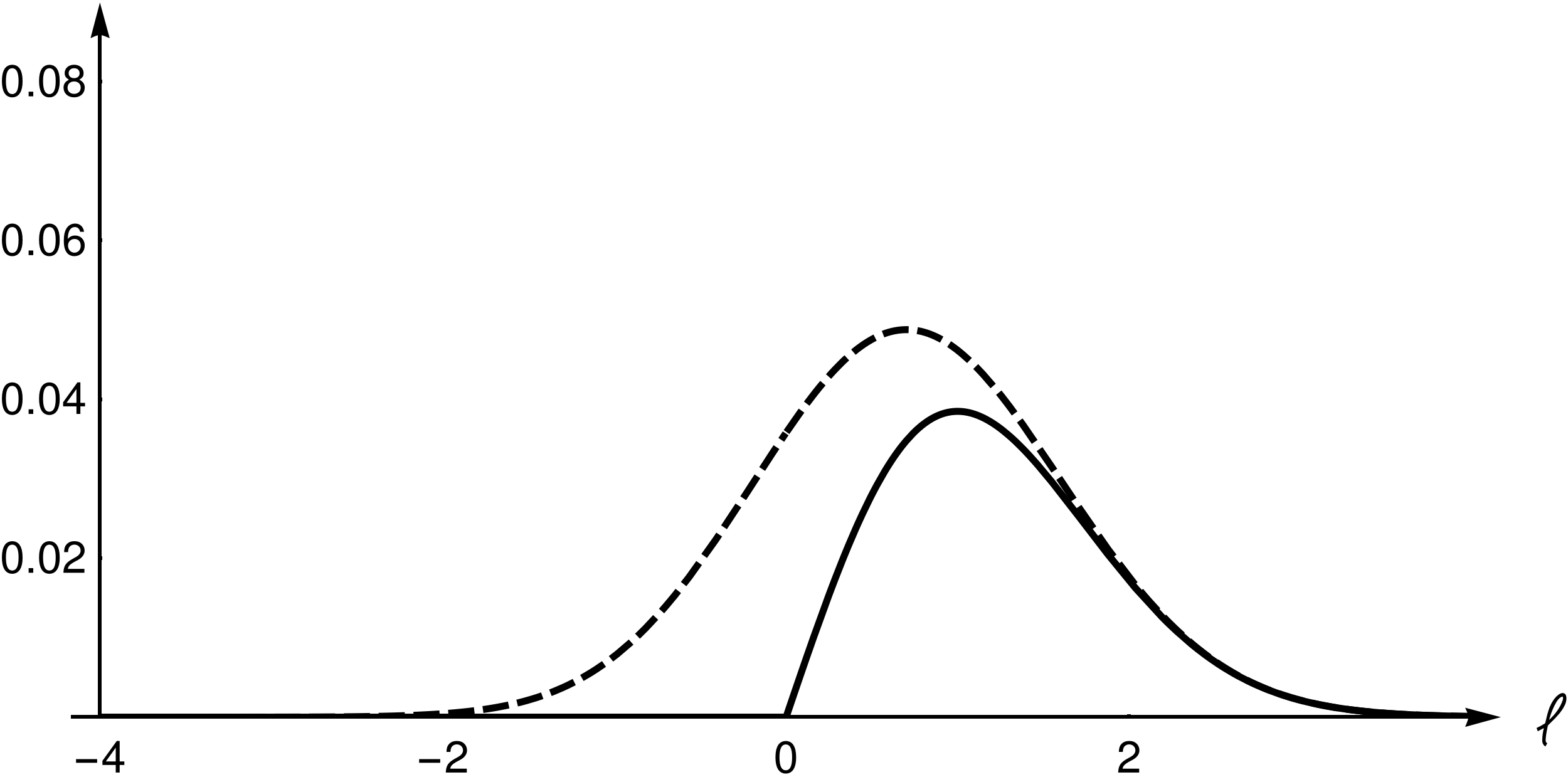}}\qquad
	\subfloat[\label{Fig_6}]{\includegraphics[scale=0.5]{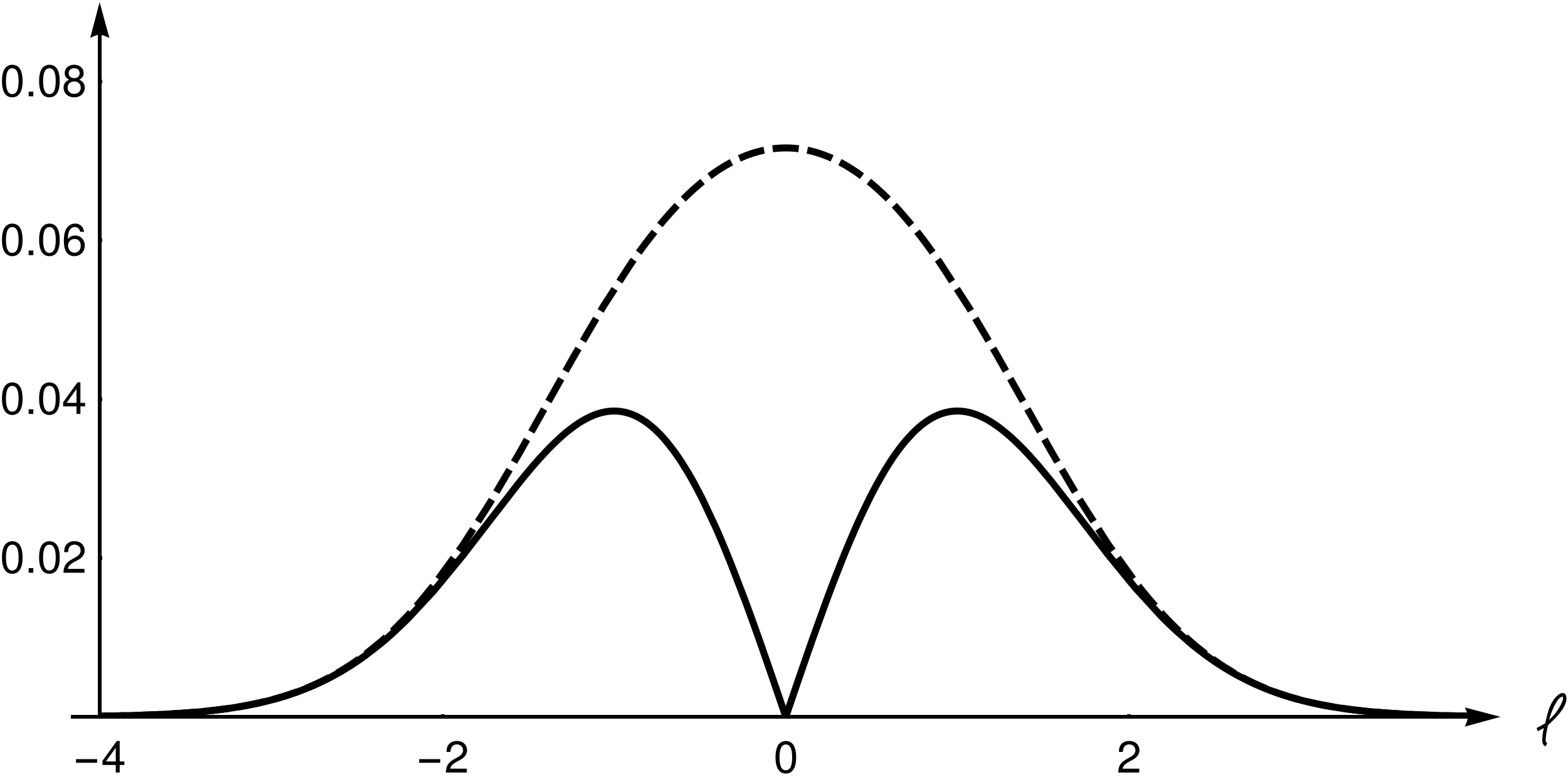}}
	\caption{Subfigures~\ref{Fig_5} and~\ref{Fig_6} show lower bounds (solid) and upper bounds (dashed) for $c_{ES}(\rho,\ell)$ and $c_{LS}(\rho,\ell)$ respectively, where $\rho$ is the density of a standard two-dimensional Gaussian random vector (i.e.\ the spectral measure of the Bargmann-Fock field), for which $\lambda=1$ and $\eta^2=2$.}\label{Fig_5+6}
\end{figure*}

\begin{remark}
	Corollary~\ref{c:isotropic bounds 1} states that $c_{ES}(\rho,\ell)>0$ for all $\ell>0$, and so $c_{LS}(\rho,\ell)>0$ for all $\ell\neq 0$, however the positivity of $c_{LS}(\rho,0)$ is not addressed by this method. In \cite{SodinNazarov2015asymptotic} Nazarov and Sodin provide sufficient conditions for the positivity of $c_{LS}(\rho,0)$ in terms of the spectral measure which cover almost all non-degenerate cases.
	
	The only current quantitative lower bound for $c_{LS}(\rho,0)$ has been proven for the RPW.  In this case it has been shown that $c_{LS}(0)\geq 1.1\times 10^{-5}$ (see \cite{ingremeau2018lower}) although numerical simulations suggest that $c_{LS}(0)\approx 0.0589/(4\pi)\approx 0.00469$ (see \cite{beliaev2013bogomolny} and references therein).
\end{remark}

Although the bounds in the previous two corollaries can be proven by local methods (i.e.\ without using our analysis of saddle points) there is no scope for improving these estimates using the same methods. However it may be possible to get tighter bounds on $c_{LS}$ and $c_{ES}$ through Theorem~\ref{t:integral equality} and a better characterisation of $p_{s^-}$.

Using the explicitly known densities $p_{m^+}$, $p_{m^-}$ and $p_s$ along with Theorem~\ref{t:integral equality} allows us to derive some monotonicity properties of $c_{ES}$ and $c_{LS}$.

\begin{corollary}\label{c:monotonicity}
	Let $f$ satisfy Conditions \ref{Conditions 1} and $\lambda$ be defined as above. Then $c_{ES}(\rho,\ell)$ and $c_{LS}(\rho,\ell)$ are strictly decreasing in $\ell$ for $\ell>\sqrt{2}/\lambda$, so that any local maxima of $c_{LS}$ must be contained in $[-\sqrt{2}/\lambda,\sqrt{2}/\lambda]$ and any local maxima of $c_{ES}$ must be contained in $(-\infty,\sqrt{2}/\lambda]$.
	
	If $\lambda=\sqrt{2}$, then $c_{ES}(\ell)$ is non-decreasing on $(-\infty,0]$ and strictly decreasing on $[1,\infty)$, so that any strict local maxima of $c_{ES}$ must be contained in $[0,1]$.
\end{corollary}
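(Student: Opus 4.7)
The plan is to differentiate the integral representations from Theorem~\ref{t:integral equality}, bound the unknown saddle densities $p_{s^\pm}$ by $p_s$, and then use the Morse identity together with the explicit form of $p_{m^-}$ to reduce the problem to a single scalar inequality for isotropic fields. By Theorem~\ref{t:integral equality} the functionals are absolutely continuous with
\begin{equation*}
c_{ES}'(\ell) = p_{s^-}(\ell) - p_{m^+}(\ell), \qquad c_{NS}'(\ell) = p_{s^-}(\ell) - p_{s^+}(\ell) - p_{m^+}(\ell) + p_{m^-}(\ell).
\end{equation*}
Using $p_{s^\pm} \in [0, p_s]$ together with $p_{s^+} + p_{s^-} = p_s$, these are dominated above by $p_s - p_{m^+}$ and $p_s - (p_{m^+} - p_{m^-})$ respectively; since $p_{m^-} \geq 0$ the latter is the smaller of the two, so it suffices to prove $p_{m^+}(\ell) > p_{m^-}(\ell) + p_s(\ell)$ for every $\ell > \sqrt{2}/\lambda$, which forces strict decrease of both $c_{NS}$ and $c_{ES}$ simultaneously.

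Applying the Morse-theoretic identity $p_{m^+}+p_{m^-}-p_s = \lambda^2(\ell^2-1)\phi(\ell)/(\pi\eta^2)$ from Corollary~\ref{c:isotropic bounds 1} rewrites $p_{m^+}-p_{m^-}-p_s$ as $-2p_{m^-}(\ell) + \lambda^2(\ell^2-1)\phi(\ell)/(\pi\eta^2)$, so the target inequality collapses to
\begin{equation*}
p_{m^-}(\ell) < \frac{\lambda^2(\ell^2-1)\phi(\ell)}{2\pi\eta^2} \quad \text{for all } \ell > \sqrt{2}/\lambda.
\end{equation*}
This is an inequality between two explicit functions, since $p_{m^-}$ has a closed-form Kac-Rice expression (see \cite{cheng2015expected}) obtained by conditioning on $f(0)=\ell$ and $\nabla f(0)=0$ and integrating the shifted-Gaussian Hessian density over the positive-definite cone; after the change of variable $V := \sqrt{3}(f_{11}+f_{22})/(2\sqrt{K^{(4)}(0)}) \sim N(-\lambda\ell/\sqrt{2}, 1-\lambda^2/2)$ (with the orthogonal Hessian components independent of $V$ and $f$), the inequality reduces to an explicit comparison between expressions in $\phi$ and $\Phi$ depending only on $\xi := \lambda\ell/\sqrt{2}$ and $\tau := \sqrt{1-\lambda^2/2}$. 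The threshold $\ell = \sqrt{2}/\lambda$ (i.e.\ $\xi = 1$) emerges naturally as the place where at $\lambda = \sqrt{2}$ the RHS vanishes simultaneously with $p_{m^-}$ (by the Helmholtz property discussed next), so the inequality is tight at that boundary parameter; for $\lambda < \sqrt{2}$ both sides are strictly positive at the boundary and one checks directly that the inequality holds strictly at $\xi=1$ and is preserved for $\xi > 1$ by monotonicity, since the RHS grows like $\ell^2\phi(\ell)$ while $p_{m^-}(\ell) \sim \ell^2\phi(\ell) e^{-\lambda^2\ell^2/(2(2-\lambda^2))}$ decays strictly faster than Gaussian.

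For the final claim, when $\lambda = \sqrt{2}$ the covariance $\kappa(x) = J_0(\sqrt{8/\eta^2}|x|)$ has spectral measure concentrated on the circle of radius $\sqrt{8/\eta^2}$, so $f$ satisfies the Helmholtz equation $\Delta f = -(8/\eta^2) f$ pointwise almost surely. At any critical point with $f(x_0) = \ell \leq 0$ the Hessian trace $-(8/\eta^2)\ell$ is non-negative, ruling out negative definiteness; hence $p_{m^+}(\ell) = 0$ on $(-\infty, 0]$, which gives $c_{ES}'(\ell) = p_{s^-}(\ell) - p_{m^+}(\ell) = p_{s^-}(\ell) \geq 0$ there. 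The main obstacle is the uniform-in-$\lambda$ verification of the pointwise bound on $p_{m^-}$: it is clean at the two parameter extremes (RPW case via the Helmholtz argument, and the asymptotic regime $\ell \to \infty$ via the Gaussian-tail decay of $p_{m^-}$), but the intermediate regime just above $\ell = \sqrt{2}/\lambda$ requires a careful boundary analysis and a monotonicity argument in $\ell$ to propagate strict inequality throughout the claimed range.
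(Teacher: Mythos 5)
Your overall strategy coincides with the paper's: differentiate the integral formulas of Theorem~\ref{t:integral equality}, use $p_{s^-}-p_{s^+}\leq p_{s^-}+p_{s^+}=p_s$ to reduce both claims to the single pointwise inequality $p_{m^+}(\ell)>p_{m^-}(\ell)+p_s(\ell)$ for $\ell>\sqrt{2}/\lambda$, and handle the $\lambda=\sqrt{2}$ case via $p_{m^+}\equiv 0$ on $(-\infty,0]$ (your Helmholtz explanation of that vanishing is a nice conceptual gloss on what the paper reads off from the explicit formula, and your reformulation via the Euler-characteristic identity $p_{m^+}+p_{m^-}-p_s=\lambda^2(\ell^2-1)\phi(\ell)/(\pi\eta^2)$, which reduces everything to an upper bound on $p_{m^-}$ alone, is correct and equivalent to the paper's setup). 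One small slip: the candidate upper bound for $c_{NS}'$, namely $p_s-p_{m^+}+p_{m^-}$, is the \emph{larger} of the two bounds, not the smaller; your conclusion that it suffices to prove $p_{m^+}>p_{m^-}+p_s$ is nevertheless the right one.

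The genuine gap is that the resulting scalar inequality $p_{m^-}(\ell)<\lambda^2(\ell^2-1)\phi(\ell)/(2\pi\eta^2)$ is asserted rather than proved, and the route you sketch for it does not work as stated. First, your asymptotic $p_{m^-}(\ell)\sim\ell^2\phi(\ell)e^{-\lambda^2\ell^2/(2(2-\lambda^2))}$ is wrong at leading order: in the explicit formula for $p_{m^-}$ the term $\lambda^2(\ell^2-1)\phi(\ell)\Phi(-\lambda\ell/\sqrt{2-\lambda^2})$ and the term $-\tfrac{\lambda\ell\sqrt{2-\lambda^2}}{2\pi}e^{-\ell^2/(2-\lambda^2)}$ cancel exactly at order $\ell\,e^{-\ell^2/(2-\lambda^2)}$, so $p_{m^-}$ is in fact $O(\ell^{-1}e^{-\ell^2/(2-\lambda^2)})$; the error is in the favourable direction, but it shows the asymptotics have not actually been computed. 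Second, and more importantly, an asymptotic comparison as $\ell\to\infty$ plus an unproven ``monotonicity in $\ell$'' does not cover the whole range $\ell>\sqrt{2}/\lambda$ --- you acknowledge this yourself for the regime just above the threshold, which is precisely where the statement is tight. The paper closes this gap with a short explicit computation: applying the Mills-ratio bound $1-\Phi(a)\leq\phi(a)/a$ to the two $\Phi$-factors in $p_{m^-}$ causes the exact cancellation above and yields $p_{m^-}(x)\leq\tfrac{1}{\pi\eta^2}\tfrac{\sqrt{2-\lambda^2}}{\pi x}\tfrac{2-\lambda^2}{2\lambda}e^{-x^2/(2-\lambda^2)}$, after which the inequality against $\lambda^2(x^2-1)\phi(x)/(2\pi\eta^2)$ can be checked directly to hold on all of $(\sqrt{2}/\lambda,\infty)$, uniformly in $\lambda\in(0,\sqrt{2})$. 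Without this (or some substitute) pointwise bound, your argument establishes the conclusion only for $\ell$ sufficiently large, not on the claimed interval.
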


\subsection{A special class of degenerate fields}
\label{s:degen}
As mentioned above, our methods do not cover the case in which $\rho$ is supported on the union of two lines through the origin. In this section we consider a certain special class of such fields for which we can give a more or less complete description of $c_{LS}$ and $c_{ES}$.

If the spectral measure $\rho$ is supported on a single line through the origin, it is well known that $f$ is almost surely constant in one direction and so in particular has no compact level domains. In this case, all of our results hold trivially with $c_{LS}$, $c_{ES}$ and the critical point densities identically equal to zero. The simplest non-trivial case of degenerate $\rho$ are spectral measures which are supported on four or five points. In this case, we can compute $c_{ES}$ and $c_{LS}$ explicitly.

Recall that a random variable $Y$ is said to be Rayleigh distributed with parameter $\sigma^2>0$, denoted $Y\sim\text{Ray}(\sigma)$, if $\mathbb{P}(Y\leq x)=1-e^{-x^2/(2\sigma^2)}$ for all $x\geq 0$.
\begin{proposition}\label{p:cilleruello c_NS}
	Let $f$ be the Gaussian field with spectral measure
	\begin{displaymath}
		\rho=\alpha\delta_0+\frac{\beta}{2}(\delta_K+\delta_{-K})+\frac{\gamma}{2}(\delta_L+\delta_{-L})
	\end{displaymath}
	where $\beta,\gamma>0$, $\alpha=1-\beta-\gamma\geq 0$ and $K,L\in\R^2$ are linearly independent. Then
	\begin{displaymath}
		\mathbb{E}[N_{ES,R}(\ell)]=c_{ES}(\ell)\cdot\pi R^2+O(R)
	\end{displaymath}
	and
	\begin{displaymath}
		\mathbb{E}[N_{LS,R}(\ell)]=c_{LS}(\ell)\cdot\pi R^2+O(R)
	\end{displaymath}
	where
	\begin{equation*}
		\begin{aligned}
			c_{ES}(\ell)&=\lvert K\times L\rvert\cdot\mathbb{P}\left(\lvert Y_1-Y_2\rvert\leq \ell+X_0\leq Y_1+Y_2\right),\\
			c_{LS}(\ell)&=\lvert K\times L\rvert\cdot\mathbb{P}\left(\lvert Y_1-Y_2\rvert\leq\lvert\ell+X_0\rvert\leq Y_1+Y_2\right),
		\end{aligned}
	\end{equation*}
	$\times$ denotes the cross product, $X_0\sim\mathcal{N}(0,\alpha)$, $Y_1\sim\textup{Ray}(\sqrt{\beta})$, $Y_2\sim\textup{Ray}(\sqrt{\gamma})$ and $X_0,Y_1,Y_2$ are independent. Moreover the constants implied by the $O(\cdot)$ notation in these expressions are independent of $\ell$.
	
	So in particular, $c_{LS}(\ell)=0$ if and only if $\ell=\alpha=0$. If $c_{LS}(\ell)\neq 0$ then $N_{LS,R}(\ell)/(\pi R^2)$ converges in $L^1$ to a non-constant random variable and hence does not converge a.s.\ to a constant, and this statement also holds for $c_{ES}$ and $N_{ES,R}(\ell)/(\pi R^2)$.
	
	Furthermore there exist functions $p_{m^+}$, $p_{s^-}$, $p_{m^-}$, $p_{s^+}$ and $p_s$ satisfying the conclusions of Proposition~\ref{p:density existence}, and these are defined by
	\begin{align*}
		p_{m^+}(x)&=p_{m^-}(-x)=\lvert K\times L\rvert\cdot p_{X_0+Y_1+Y_2}(x)\\
		p_{s^-}(x)&=p_{s^+}(-x)=\lvert K\times L\rvert\cdot p_{X_0+\lvert Y_1-Y_2\rvert}(x)
	\end{align*}
	where $p_Z$ denotes the probability density of a random variable $Z$. Therefore the equalities in the conclusion of Theorem~\ref{t:integral equality} hold for $f$.
\end{proposition}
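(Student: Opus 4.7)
The plan is to exploit the very simple form of $f$: since $\rho$ has finite support, $f$ admits the explicit representation
\begin{displaymath}
f(x) = X_0 + Y_1 \cos\bigl(\langle K, x \rangle + \Theta_1\bigr) + Y_2 \cos\bigl(\langle L, x \rangle + \Theta_2\bigr),
\end{displaymath}
with $X_0\sim\mathcal{N}(0,\alpha)$, $Y_1\sim\textup{Ray}(\sqrt\beta)$, $Y_2\sim\textup{Ray}(\sqrt\gamma)$, $\Theta_1,\Theta_2$ uniform on $[0,2\pi)$, and all five variables independent. A direct calculation of the covariance confirms that this field has spectral measure $\rho$, so by Kolmogorov's theorem it is the Gaussian field of the proposition. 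The linear change of coordinates $(u,v)=(\langle K,x\rangle+\Theta_1,\langle L,x\rangle+\Theta_2)$ is a diffeomorphism $\R^2\to\R^2$ of Jacobian $|K\times L|$, which pushes $f$ forward to the doubly $2\pi$-periodic deterministic function $g(u,v)=X_0+Y_1\cos u+Y_2\cos v$, so level and excursion sets of $f$ in $\R^2$ are pullbacks of the corresponding sets of $g$ on the torus $\mathbb T=(\R/2\pi\Z)^2$.

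Conditionally on $(X_0,Y_1,Y_2)$, and without loss of generality taking $Y_1>Y_2>0$ (which holds almost surely), $g$ is Morse on $\mathbb T$ with exactly four critical points: a maximum at $(0,0)$ of value $X_0+Y_1+Y_2$, a minimum at $(\pi,\pi)$ of value $X_0-Y_1-Y_2$, and saddles at $(0,\pi)$ and $(\pi,0)$ of values $X_0\pm(Y_1-Y_2)$. A quick inspection of the set $\{g>X_0+Y_1-Y_2\}$ on $\mathbb T$ (which shrinks to a point along the vertical line $u=0$ as $v\to\pi$) shows it is connected, so the saddle at $(0,\pi)$ is upper connected; symmetrically the saddle at $(\pi,0)$ is lower connected. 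I then classify the topology of $\{g=c\}$ and $\{g>c\}$ on $\mathbb T$ in three regimes: for $c\in(Y_1-Y_2,Y_1+Y_2)$ (resp.\ $c\in(-(Y_1+Y_2),-(Y_1-Y_2))$) the level set is a single null-homologous loop around the max (resp.\ min) and the excursion set is the disk it bounds (resp.\ its complement); for $|c|<Y_1-Y_2$ the level set is a pair of homologically non-trivial loops and the excursion set is an annular region wrapping around $\mathbb T$.

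Pulling back via the diffeomorphism, a null-homologous loop on $\mathbb T$ lifts to one compact loop per fundamental cell of the lattice $\Lambda=2\pi M^{-1}\Z^2$ (with $M$ the matrix with rows $K,L$), each cell of $x$-area $(2\pi)^2/|K\times L|$; a homologically non-trivial loop lifts to a curve whose net $(u,v)$-displacement per period is a non-zero lattice vector and hence escapes every compact subset of $\R^2$, irrespective of any arithmetic relation between $K$ and $L$. Setting $c=\ell-X_0\stackrel{d}{=}\ell+X_0$ (by symmetry of $X_0$), this shows that, conditionally on $(X_0,Y_1,Y_2)$, the number of compact excursion (resp.\ level) components of $f$ in $B(0,R)$ equals the number of $\Lambda$-cells contained in $B(0,R)$, multiplied by $\id\{Y_1-Y_2<\ell+X_0<Y_1+Y_2\}$ (resp.\ $\id\{Y_1-Y_2<|\ell+X_0|<Y_1+Y_2\}$), up to an additive error $O(R)$ from the $O(R)$ cells meeting $\partial B(0,R)$; this error depends only on the lattice, hence on $\rho$, not on $\ell$. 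Taking expectations and dividing by $\pi R^2$ yields the stated formulas for $c_{ES}$ and $c_{NS}$.

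The remaining assertions follow immediately. Conditional on $(X_0,Y_1,Y_2)$, $N_{LS,R}/(\pi R^2)$ converges almost surely (and in $L^1$ by dominated convergence) to $(|K\times L|/(2\pi)^2)\id\{|Y_1-Y_2|<|\ell+X_0|<Y_1+Y_2\}$, so unconditionally the scaled count converges to a Bernoulli-type random variable that is non-constant precisely when $c_{NS}(\ell)>0$, ruling out a.s.\ convergence to a deterministic limit in that case; the argument for $N_{ES,R}$ is identical. For the critical point densities of Proposition~\ref{p:density existence}, each fundamental cell contains exactly one critical point of each of the four types with values $X_0\pm(Y_1+Y_2)$ and $X_0\pm|Y_1-Y_2|$; the densities $p_{m^\pm}$ and $p_{s^\pm}$ are then the corresponding value-densities times the cell-density, and the identities of Theorem~\ref{t:integral equality} reduce to the elementary computation $\int_\ell^\infty p_{X_0+Y_1+Y_2}(x)-p_{X_0-|Y_1-Y_2|}(x)\,dx=\P(Y_1-Y_2\le \ell+X_0\le Y_1+Y_2)$ and its analogue for $c_{NS}$. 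The only delicate step in this scheme is the deterministic topological dichotomy between null-homologous and non-trivial loops of $g$ on $\mathbb T$ and their lifts to $\R^2$, which is where the linear independence of $K$ and $L$ is essential.
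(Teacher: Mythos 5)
Your overall strategy is the same as the paper's: the explicit representation $f(x)=X_0+Y_1\cos(\cdot)+Y_2\cos(\cdot)$ with Rayleigh amplitudes, reduction to a doubly periodic function on the torus with exactly four critical points, a direct topological classification of the level and excursion sets in the three regimes, and a cell-counting argument to pass from the torus to $B(0,R)$. Your derivation of the formulas for $c_{ES}$ and $c_{NS}$ via the homological dichotomy (null-homologous loops lift to one compact component per cell, non-trivial loops lift to unbounded curves) is sound and, if anything, more hands-on than the paper's, which infers the component counts from its Lemma~\ref{l:excursion sets manifolds}. One bookkeeping point: your limit is $\bigl(\lvert K\times L\rvert/(2\pi)^2\bigr)\id\{\cdots\}$, which does not literally match the stated constant $\lvert K\times L\rvert$; the paper's proof writes the covariance as $\alpha+\beta\cos(2\pi K\cdot x)+\gamma\cos(2\pi L\cdot x)$, so its fundamental cell has area $1/\lvert K\times L\rvert$ rather than $(2\pi)^2/\lvert K\times L\rvert$. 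You should reconcile your Fourier convention with the one the stated constant presupposes.

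The genuine error is your classification of the two saddles, which is reversed. You argue that $\{g>X_0+Y_1-Y_2\}$ is connected and conclude that the saddle at $(0,\pi)$ is upper connected; but that is the criterion of Definition~\ref{d:lower connected aperiodic}, which the paper explicitly restricts to aperiodic fields. On the torus it fails outright: at either saddle both $\{g>c\}$ and $\{g<c\}$ are connected (the complement of the pinched band is again a band), so the connectivity criterion would classify each saddle as both upper and lower connected. Under the operative Definition~\ref{d:lower connected general}, the saddle at the \emph{higher} level $X_0+\lvert Y_1-Y_2\rvert$ is \emph{lower} connected: $A_{\ell-\epsilon}$ is the non-simple band wrapping around $\mathbb{T}$, while $A_{\ell+\epsilon}$ is a simple disk around the maximum, which is exactly case (2) of that definition. (This is also forced by Lemma~\ref{l:excursion sets manifolds}: $N_{\text{simple}}$ drops from $1$ to $0$ as the level decreases through $X_0+\lvert Y_1-Y_2\rvert$.) The error propagates into your final paragraph: you take $p_{s^-}$ to be the density of $X_0-\lvert Y_1-Y_2\rvert$, whereas the proposition (correctly) has $p_{s^-}(x)=\lvert K\times L\rvert\, p_{X_0+\lvert Y_1-Y_2\rvert}(x)$, and your claimed identity $\int_\ell^\infty p_{X_0+Y_1+Y_2}(x)-p_{X_0-\lvert Y_1-Y_2\rvert}(x)\,dx=\mathbb{P}(Y_1-Y_2\le\ell+X_0\le Y_1+Y_2)$ is false: the left-hand side equals $\mathbb{P}(-\lvert Y_1-Y_2\rvert<\ell+X_0\le Y_1+Y_2)$, which is strictly larger. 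With the corrected assignment the consistency check with Theorem~\ref{t:integral equality} does go through, so the fix is local, but as written the last part of your argument establishes the wrong density identities.
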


\begin{remark} A stationary centred continuous Gaussian field is ergodic if and only if its spectral measure has no atoms  \cite[Section 6.1]{nazarov2009number}, and so the fields considered in this proposition are not ergodic. The fact that $N_{LS,R}(\ell)/(\pi R^2)$ does not generally converge to $c_{LS}(\ell)$ for these fields shows that the ergodicity requirement in the second part of Theorems~\ref{t:main level} and~\ref{t:main excursion} cannot be entirely relaxed.
\end{remark}

\begin{remark} It has previously been shown that if $\text{spt}(\rho)$ has precisely four points then $c_{LS}(0)=0$ (see \cite{kurlberg2017variation}). Proposition~\ref{p:cilleruello c_NS} shows that $c_{LS}(0)>0$ when $\text{spt}(\rho)$ has five points, and that $c_{LS}(\ell)>0$ for $\ell\neq 0$ when $\text{spt}(\rho)$ has four or five points.
\end{remark}

\begin{remark}
	Figures~\ref{Fig_7+8} and~\ref{Fig_9+10} show $c_{ES}$ and $c_{LS}$ for different values of $\alpha,\beta,\gamma$ when $K=(1,0)$ and $L=(0,1)$. Figure~\ref{Fig_7+8} suggests that $c_{LS}$ is bimodal whenever $\alpha=0$ (we know that it is at least bimodal, since $c_{LS}(0)=0$ and $c_{LS}(\ell)>0$ for $\ell\neq 0$ in this case), however for $\alpha>0$, $c_{LS}$ can be bimodal or unimodal. Combining this observation with Corollary~\ref{c:isotropic bounds 1} raises the interesting question of determining for which Gaussian fields $c_{LS}(\ell)$ is bimodal.
\end{remark}

\begin{remark} Figures~\ref{Fig_7+8} and~\ref{Fig_9+10} also suggest that the derivatives of $c_{ES}$ and $c_{LS}$ need not be continuous at zero when $\alpha=0$. This can be verified analytically since
	\begin{displaymath}
		c_{ES}^\prime(\ell)=p_{s^-}(\ell)-p_{m^+}(\ell)
	\end{displaymath}
	and these densities are explicitly known. In particular, it can be shown that $p_{Y_1+Y_2}$ is everywhere continuous, whereas $p_{\lvert Y_1-Y_2\rvert}$ has a jump discontinuity at zero (but is continuous elsewhere). So $c_{ES}^\prime$ is continuous except at zero. An analogous calculation permits the same conclusion for $c_{LS}^\prime$. On the other hand, when $\alpha\neq 0$, $c_{ES}$ and $c_{LS}$ are continuously differentiable everywhere. A major unresolved question arising from our work is to determine whether $c_{LS}$ and $c_{ES}$ are continuously differentiable more generally (outside degenerate cases).
\end{remark}

\begin{figure*}[h!]
	\centering
	\subfloat{\includegraphics[scale=0.5]{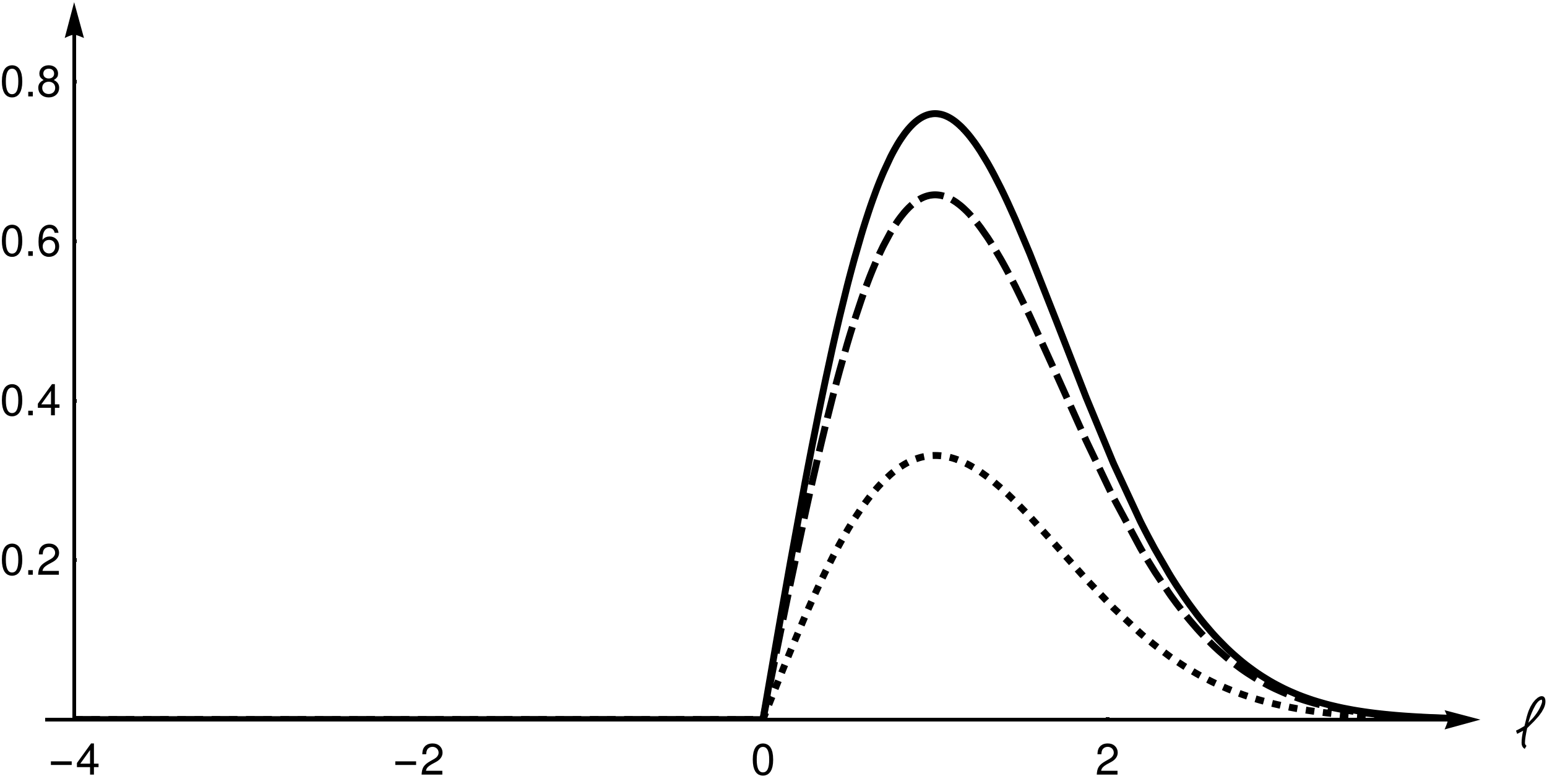}}\qquad
	\subfloat{\includegraphics[scale=0.5]{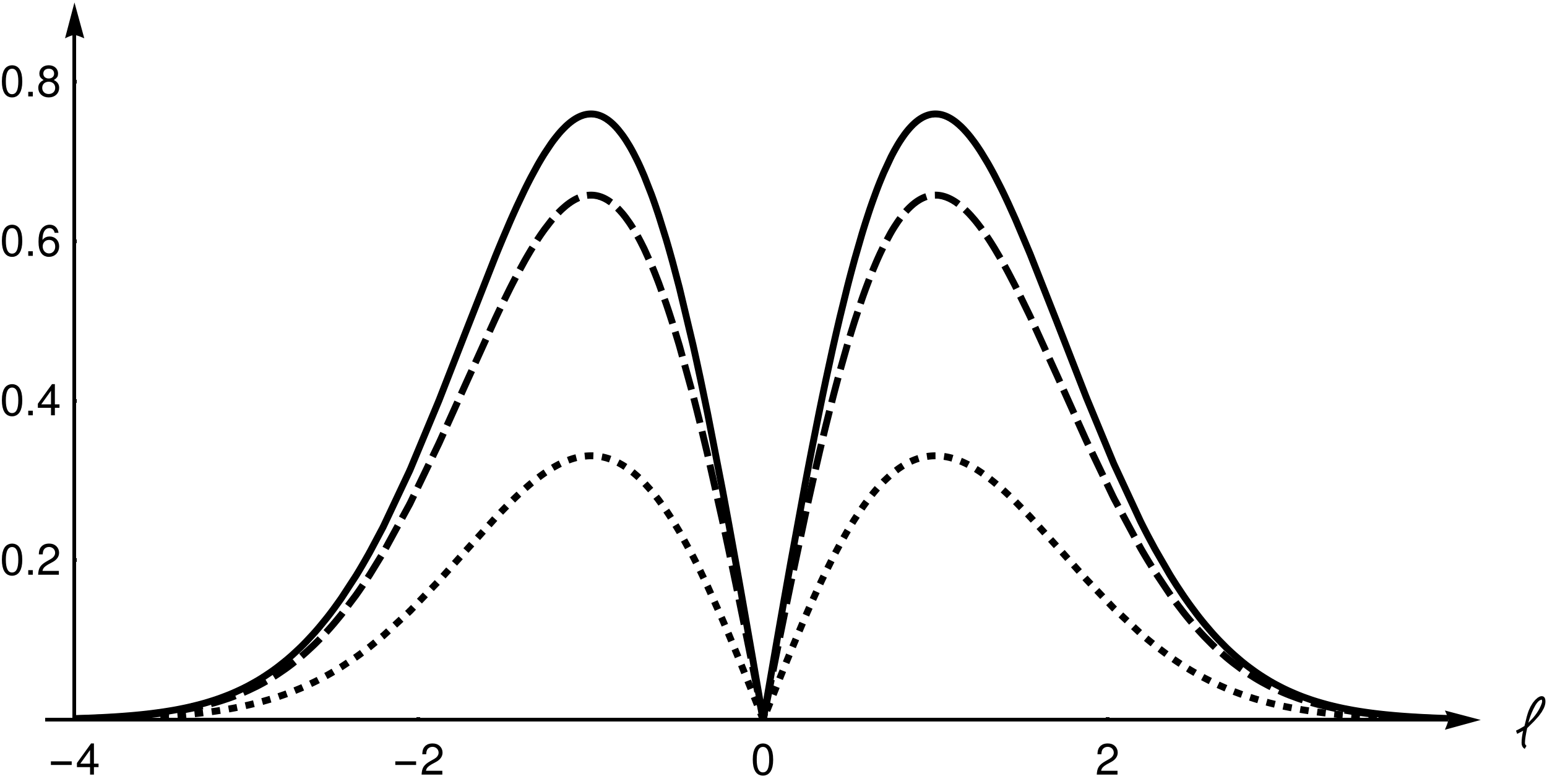}}
	\caption{The functions $c_{ES}(\ell)$ (left) and $c_{LS}(\ell)$ (right) with $\alpha=0$ for $\beta-\gamma=0$ (solid), $\beta-\gamma=0.5$ (dashed) and $\beta-\gamma=0.9$ (dotted) respectively.}\label{Fig_7+8}
\end{figure*}

\begin{figure*}[h!]
	\centering
	\subfloat[\label{Fig_9}]{\includegraphics[scale=0.5]{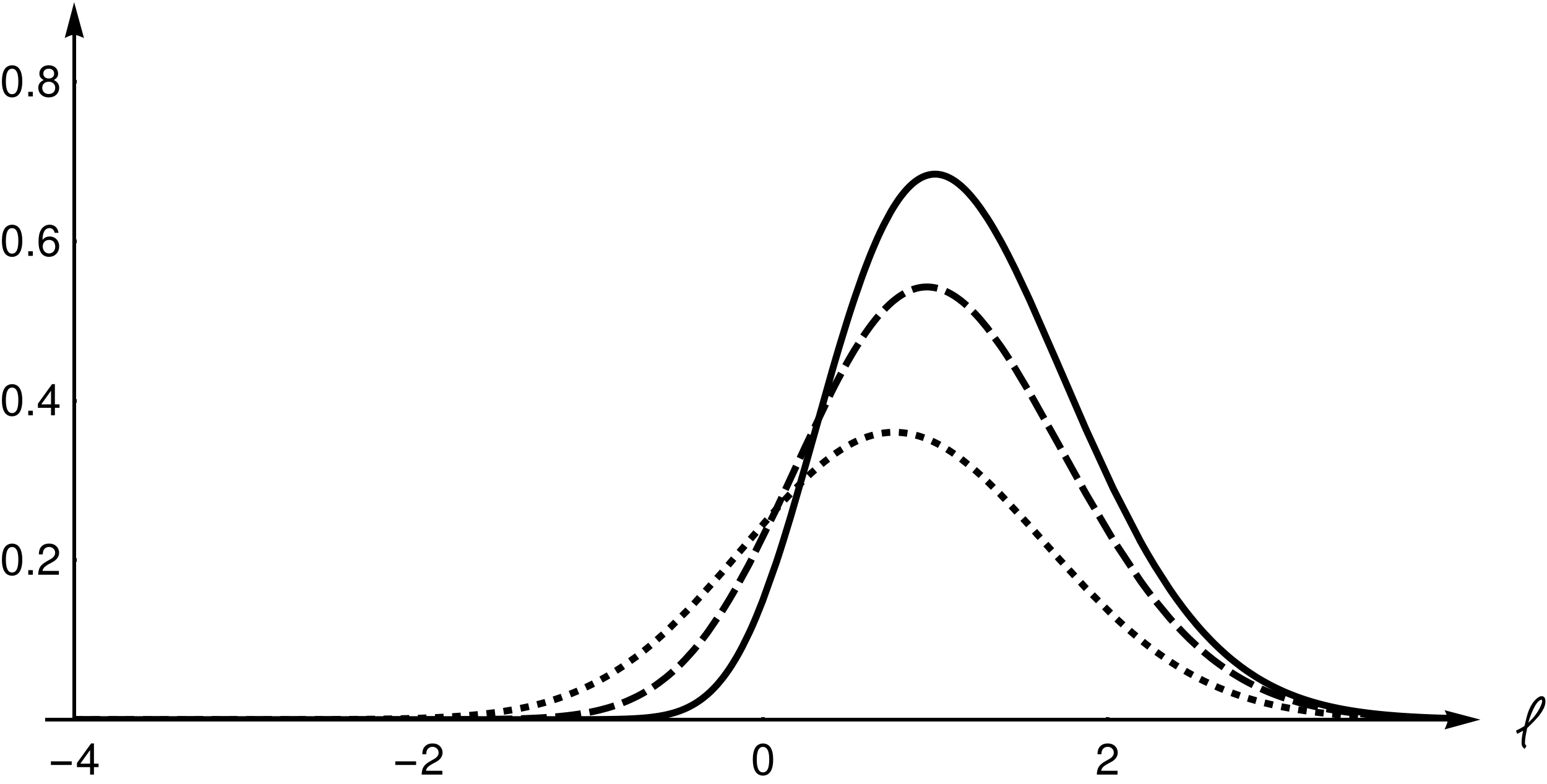}}\qquad
	\subfloat[\label{Fig_10}]{\includegraphics[scale=0.5]{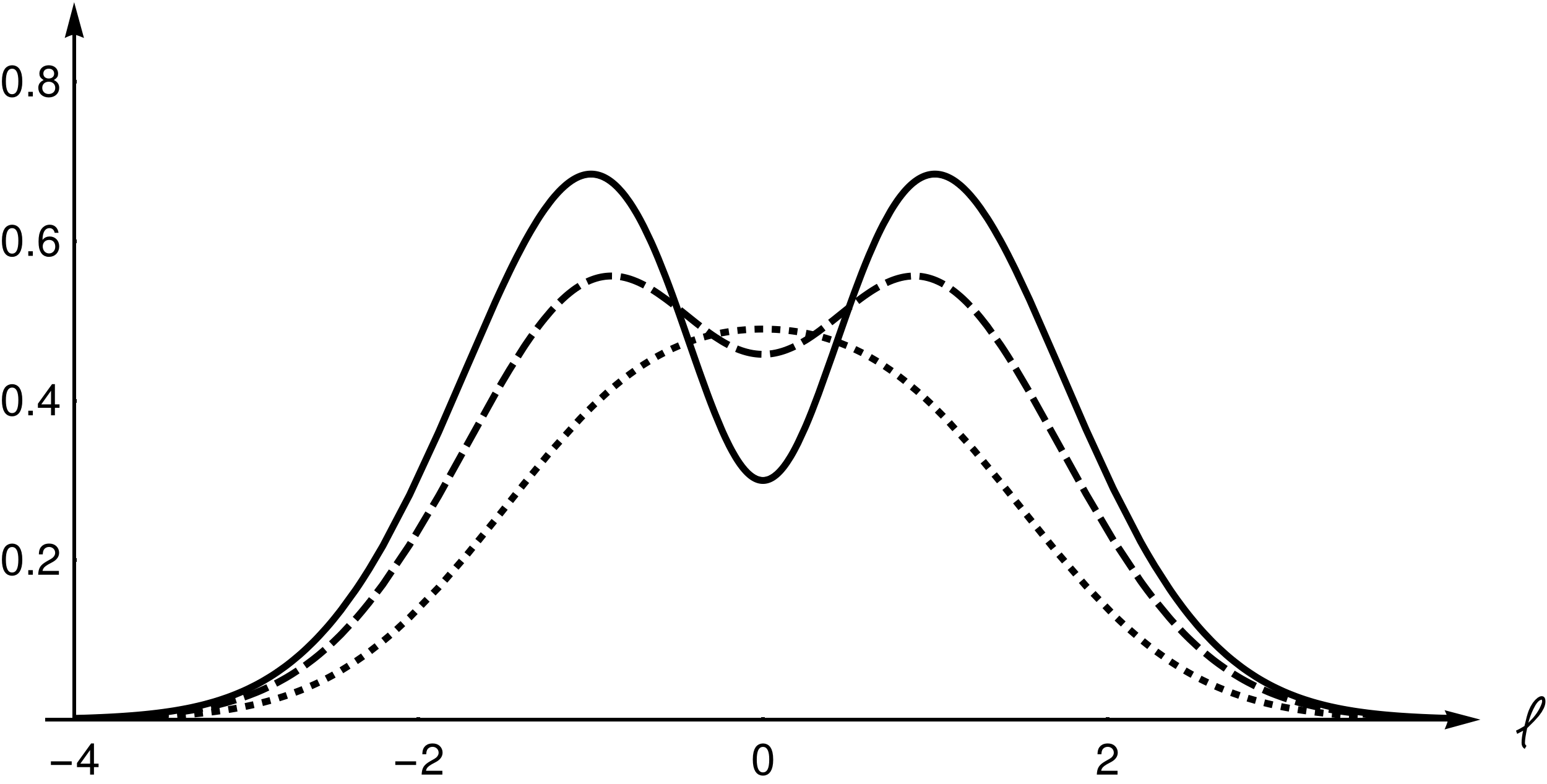}}
	\caption{The functions $c_{ES}(\ell)$ (left) and $c_{LS}(\ell)$ (right) with $\beta=\gamma$ for $\alpha=0.1$ (solid), $\alpha=0.3$ (dashed) and $\alpha=0.6$ (dotted) respectively.}\label{Fig_9+10}
\end{figure*}

\subsection{Summary of the rest of the paper}
The remainder of the paper is organised as follows. In Section~\ref{s:proof main results} we state the main (completely deterministic) topological lemma underlying our results (Lemma~\ref{l:main topological}) and combine this with Proposition~\ref{p:density existence} to prove Theorems~\ref{t:main level},~\ref{t:main excursion} and~\ref{t:integral equality} and their corollaries. In Section~\ref{s:critical points} we give the full definition of upper/lower connected saddle points and prove Proposition~\ref{p:density existence}. In Section~\ref{s:topological} we prove Lemma~\ref{l:main topological} via a series of steps, and also prove Proposition~\ref{p:cilleruello c_NS}, thereby completing the proof of all results in the paper.


\section{Proof of the main results}\label{s:proof main results}
The first step in proving Theorems~\ref{t:main level},~\ref{t:main excursion} and~\ref{t:integral equality} is to translate our assumptions on the Gaussian field $f$ into topological properties; these properties differ slightly depending on whether or not $f$ is periodic.
\begin{definition}\label{d:periodic}
	Let $g:\R^2\to\R$ be a function which is not constant in any direction; that is, there is no $v\in\R^2\backslash\{0\}$ such that for all $x\in\R^2$ and $a\in\R$, $g(x)=g(x+av)$. We say that $g$ is \textit{doubly periodic} if there exist two linearly independent vectors $y,z\in\R^2$ such that for all $x\in\R^2$, $g(x+y)=g(x)=g(x+z)$. We say that $g$ is \textit{singly periodic} if it is not doubly periodic and there exists $y\in\R^2\backslash\{0\}$ such that for all $x\in\R^2$, $g(x)=g(x+y)$. If neither of these conditions holds we say that $g$ is \textit{aperiodic}.
\end{definition}
We note that this definition applies to deterministic functions. We say that a random field $f$ is \textit{doubly periodic} if there exist linearly independent vectors $y,z\in\R^2$ such that with probability one, $f(x+y)=f(x)=f(x+z)$ for all $x\in\R^2$. We make an analogous definition for singly periodic fields, and we say that a random field is \textit{aperiodic} if it is neither doubly periodic nor singly periodic. The Gaussian field with spectral measure $\rho$ is doubly periodic if and only if there exists $A\in GL(2,\R)$ such that $\{Ax:x\in\text{spt}(\rho)\}\subset\mathbb{Z}^2$, and similarly is singly periodic if and only if it is not doubly periodic and there exists $A\in GL(2,\R)$ such that $\{Ax:x\in\text{spt}(\rho)\}\subset\mathbb{Z}\times\R$.

For our purposes, it is more natural to consider periodic functions as being defined on the torus or cylinder, and so we will specify the topological properties of Gaussian fields in terms of these domains. 

For any doubly periodic function $g:\R^2\to\R$ we can choose two linearly independent vectors $y,z\in\R^2$ satisfying the conditions in Definition~\ref{d:periodic} with minimum distance to the origin, and $g$ is then specified entirely by its values on the parallelogram
\begin{displaymath}
	P:=\left\{x\in\R^2:x=ty+sz,\;t,s\in[0,1)\right\} .
\end{displaymath}
We call this the \textit{associated parallelogram} for $g$ and call $y,z$ \textit{periodic vectors} for $g$ (they are not unique since we could also choose $-y,-z$). By identifying $P$ with the 2-dimensional torus~$\mathbb{T}$, we let $g_\mathbb{T}$ be $g$ defined on the torus. By the choice of $y,z$ above, when $g$ is a stationary random field there are no two distinct points in $P$ on which $g$ is almost surely equal.

If $g$ is singly periodic then we can choose $y\in\R^2\backslash\{0\}$ satisfying the conditions in Definition~\ref{d:periodic} with minimum distance to the origin; we call this a \textit{periodic vector} for $g$. By rotating the axes we may assume that $g$ is periodic in the direction of the $x$-axis (i.e.\ $y=(y_1,0)$) so that $g$ is specified entirely by its values on $[0,y_1]\times\R$. This strip can be identified with the infinite cylinder $\mathcal{C}:=S^1\times\R$ under the quotient relationship that $x\sim z$ if and only if $x-z=(my_1,0)$ for some $m\in\mathbb{Z}$. We will often work with the compact subset $\mathcal{C}(n):=[0,y_1]\times[-n,n]$ under the same quotient relationship. We let $g_\mathcal{C}$ and $g_{\mathcal{C}(n)}$ denote $g$ restricted to these surfaces under the previous identification.

We next introduce some elements of Morse theory, in particular the concept of a Morse function on a manifold; here we follow \cite{handron2002generalized}. We emphasise that, unless explicitly stated, a manifold $M$ may or may not have a boundary $\partial M$. We also emphasise that we will only ever work with $M$ being the plane $\mathbb{R}^2$, the closure of the disc $B(R)$, the torus $\mathbb{T}$, or the cylinders $\mathcal{C}$ and $\mathcal{C}(n)$, and so it is sufficient to bear these examples in mind.

\begin{definition}
	\label{d:morse}
	Let $M$ be an $n$-dimensional Riemannian manifold and $f\in C^2_\text{loc}(M)$. (If $\partial M\neq\emptyset$, we take this to mean that for any coordinate chart $\mathbf{x}$, the function $f\circ\mathbf{x}^{-1}$ can be extended to a $C^2$ function on an open subset of $\R^n$. In particular this means that $f|_{\partial M}$ is twice continuously differentiable.) We say that $f$ is \textit{Morse} if the following hold (with any condition depending on $\partial M$ holding implicitly if $M$ has no boundary):
	\begin{enumerate}
		\item The critical points of $f$ and $f|_{\partial M}$ are non-degenerate;
		\item None of the critical points of $f$ are contained in $\partial M$;
		\item If $\nabla f$ is tangent to $\partial M$ at $p$ and $u$ is a unit vector normal to $\partial M$ at $p$, then $\partial_{\nabla f(p)}\partial_u f(p)\neq 0$ where $\partial_{\nabla f(p)}$ and $\partial_u$ denote the directional derivatives in the directions $\nabla f(p)$ and $u$ respectively;
		\item The critical points of $f$ and $f|_{\partial M}$ all occur at distinct levels.
	\end{enumerate}
	If $M$ is a manifold without boundary this simplifies to the requirement that all critical points of $f$ are non-degenerate and occur at distinct levels. 
\end{definition}

We note that in the above definition $f|_{\partial M}$ is viewed as a function on the $(n-1)$-dimensional manifold $\partial M$ so that a critical point of $f|_{\partial M}$ is not necessarily a critical point of $f$. 

Using the definition of a Morse function, we next introduce a set of assumptions that a stationary random field must satisfy in order for our main results to hold. In the lemma that immediately follows, we will claim in particular that these assumptions are satisfied for Gaussian fields satisfying Conditions \ref{Conditions 1}, but we have chosen to isolate these assumptions to illustrate the limited role of Gaussianity in the proof of the main results.

We say that a point $t\in\partial M$ is a \emph{tangent point} of $f$ if $t$ is a critical point of the restricted function $f|_{\partial M}$ (the name comes from the fact that the level set $\{f=f(t)\}$ will be tangent to $\partial M$ at $t$). We will also use the term `tangent point' to describe a local extremum of the restriction of $f$ to a finite union of line segments. Let $N_\text{tang}(A)$ denote the number of tangent points of $f$ in $A$ (where $A$ is the boundary of a manifold or a finite union of line segments) and $N_\text{crit}(M)$ the number of critical points of $f$ in $M$. We will only use these terms when $A$ and $M$ are compact, so that by the first point in the definition of a Morse function, the number of critical and tangent points will be finite.

\begin{condition}\leavevmode\label{conditions main results}
	A stationary random field $f$ satisfies the following:
	\begin{enumerate}
		\item If $f$ is aperiodic, then for each $R>0$, $f|_{\overline{B(R)}}$ is almost surely Morse (taking a union over $R\in\mathbb{N}$, this implies also that $f$ is almost surely Morse on $\mathbb{R}^2$);
		\item If $f$ is doubly periodic, then $f_\mathbb{T}$ is almost surely Morse;
		\item If $f$ is singly periodic, then for each $n\in\mathbb{N}$, $f_{\mathcal{C}(n)}$ is almost surely Morse (taking a union over $\mathbb{N}$, this implies that $f_\mathcal{C}$ is almost surely Morse);
		\item If $f$ is periodic and $L\subset\R^2$ is a line segment of length $R$, then $\mathbb{E}(N_\text{tang}(L))=c_\theta R$ for a constant $c_\theta>0$ that depends only on the direction of $L$;
		\item $\mathbb{E}(N_\text{crit}(B(R)))=c R^2$ for a constant $c>0$;
		\item $\mathbb{E}(N_\text{tang}(\partial B(R)))=O(R)$ as $R\rightarrow\infty$.
	\end{enumerate}
\end{condition}

\begin{lemma}\label{Gaussian fields main theorem}
	Let $f$ satisfy Conditions \ref{Conditions 1}. Then $f$ satisfies Conditions~\ref{conditions main results}.
\end{lemma}

\begin{proof}
	
	All six conditions are established using variations of the Kac-Rice formula. This is an important tool in the study of Gaussian fields, and random functions more generally, which relates moments of the number of zeroes of a field to conditional moments of its gradient/derivative. See \cite{nicolaescu2014kac} for an introduction to the Kac-Rice formula or \cite[Chapter 11]{RFG} for a rigorous derivation.
	
	\textbf{(1)}-\textbf{(3)}.\ We first note that since $f\in C^2_\text{loc}(\R^2)$ almost surely by assumption, $f|_M\in C^2_\text{loc}(M)$ almost surely whenever $M$ is $\overline{B(R)}$, $\mathbb{T}$ or $\mathcal{C}(n)$. Corollary 11.3.5 of \cite{RFG} states that $f|_M$ has the first two properties of a Morse function if there exists a countable atlas $\{\mathbf{x}_i\}_{i\in I}$ such that for each $i\in I$, the covariance kernel of $f^{(i)}:=f|_M\circ\mathbf{x}_i^{-1}$ is $C^{4+}$ (on its domain of definition) and the Gaussian vectors $(\partial_xf^{(i)}(0),\partial_{xx}f^{(i)}(0),\partial_{xy}f^{(i)}(0))$ and $(\partial_yf^{(i)}(0),\partial_{yy}f^{(i)}(0),\partial_{xy}f^{(i)}(0))$ are non-degenerate (where $\partial_x$ denotes taking the derivative with respect to the first variable, $\partial_{xx}$ denotes taking the second derivative with respect to the first variable and we make analogous definitions for $\partial_y$, $\partial_{yy}$ and $\partial_{xy}$). For each choice of $M$ that we consider (i.e.\ $\overline{B(R)}$, $\mathbb{T}$ or $\mathcal{C}(n)$) it is clear that we can choose a finite atlas of charts $\mathbf{x}_i$ and that $f|_M\circ\mathbf{x}_i^{-1}$ will be a translation of $f$ (restricted to some open set) for each such chart. Therefore the covariance kernel of $f^{(i)}$ is a restriction of $\kappa$, which is $C^{4+}$ by assumption, and all that remains is to show that $(\partial_xf(t),\partial_{xx}f(t),\partial_{xy}f(t))$ and $(\partial_yf(t),\partial_{yy}f(t),\partial_{xy}f(t))$ are non-degenerate Gaussian vectors for any $t\in\R^2$. By assumption $\nabla f(t)$ and $\nabla^2 f(t)$ are non-degenerate Gaussian vectors for any $t\in\R^2$. It is a standard fact, for Gaussian fields with constant variance, that $\nabla f(t)$ and $\nabla^2 f(t)$ are independent for fixed $t$ (see e.g.\ \cite[Chapter 5]{RFG}) and this proves the necessary non-degeneracy. (This part of the result does not depend on whether $f$ is periodic.)
	
	We note that $f|_{\overline{B(R)}}$ has the third property of a Morse function provided that there is no $\theta\in[0,2\pi]$ such that
	\begin{align*}
		0=g_1(\theta)&:=\begin{pmatrix}
			\cos(\theta)\\
			\sin(\theta)
		\end{pmatrix}^T
		\nabla f(p)\\
		0=g_2(\theta)&:=
		\begin{pmatrix}
			\cos(\theta)\\
			\sin(\theta)
		\end{pmatrix}^T
		\nabla^2 f(p)
		\begin{pmatrix}
			-\sin(\theta)\\
			\cos(\theta)
		\end{pmatrix}
	\end{align*}
	where $p=(R\cos(\theta),R\sin(\theta))$ (see Figure~\ref{Fig_11}).
	
	\begin{figure*}[h!]
		\centering
		\input{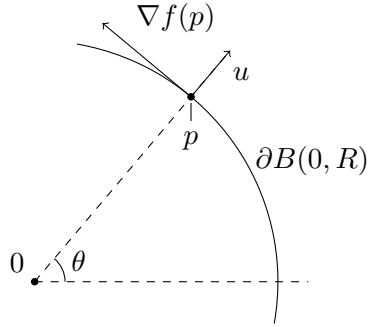}
		\caption{If $p=(R\cos(\theta),R\sin(\theta))$ and $\nabla f(p)$ is orthogonal to $u:=(\cos(\theta),\sin(\theta))$, we require that $\partial_{\nabla f(p)}\partial_u f(p)\neq 0$.}\label{Fig_11}
	\end{figure*}
	
	If we assumed that $f$ is $C^3$, then Bulinskaya's lemma would imply that no such $\theta$ exists almost surely. Since we assume only that $f$ is $C^{2+\nu}$, we use a different argument which is adapted from the proof of \cite[Lemma 11.2.11]{RFG}. Since $f\in C^{2+\nu}_\text{loc}(\R^2)$ almost surely, $g_2$ is almost surely $\nu$-H\"{o}lder continuous. Therefore for each $\delta>0$ we can find $C_\delta,D_\delta>0$ such that, with probability at least $1-\delta$, on $\partial B(R)$ the $\nu$-H\"{o}lder norm of $g_2$ is at most $C_\delta$ and each element of $\nabla^2 f$ is bounded in absolute value by $D_\delta$; we denote this event by $A_\delta$.
	
	Now let $I_{mj}$ be a collection of open intervals in $\R$ such that $[0,2\pi]\subset\cup_j I_{mj}$ for each $m$, $\sum_j\textup{Length}(I_{mj})\to 2\pi$ as $m\to\infty$, and $\textup{Length}(I_{mj})\to 0$ uniformly in $j$ as $m\to\infty$. Furthermore let $\theta_{mj}$ be the midpoint of each $I_{mj}$ and define
	\begin{displaymath}
		E_{mj}=\left\{\exists \theta\in I_{mj} : g_1(\theta)=g_2(\theta)=0\right\}.
	\end{displaymath}
	On the event $A_\delta\cap E_{mj}$ we note that
	\begin{displaymath}
		\lvert g_1(\theta_{mj})\rvert \leq c_1 D_\delta\textup{Length}(I_{mj})\quad\text{and}\quad\lvert g_2(\theta_{mj})\rvert\leq c_1 C_\delta\textup{Length}(I_{mj})^\nu
	\end{displaymath}
	for some universal constant $c_1>0$. Therefore
	\begin{equation}\label{e:Bulinskaya alternative}
		\begin{aligned}
			\mathbb{P}(&A_\delta\cap E_{mj})\\
			&\leq\int_{\lvert x\rvert
				\leq c_1 D_\delta\textup{Length}(I_{mj})}\mathbb{P}\left(\lvert g_2(\theta_{mj})\rvert\leq c_1 C_\delta\textup{Length}(I_{mj})^\nu\Big|g_1(\theta_{mj})=x\right)p_{g_1(\theta_{mj})}(x)\;dx\\
			&=\int_{\lvert x\rvert
				\leq c_1 D_\delta\textup{Length}(I_{mj})}\mathbb{P}\left(\lvert g_2(\theta_{mj})\rvert\leq c_1 C_\delta\textup{Length}(I_{mj})^\nu\right)p_{g_1(\theta_{mj})}(x)\;dx\\
		\end{aligned}
	\end{equation}
	where $p_{g_1(\theta_{mj})}$ is the probability density function of $g_1(\theta_{mj})$ and we have used the fact that $g_1(\theta_{mj})$ is independent of $g_2(\theta_{mj})$. Each $g_i(\theta)$ is a linear combination of elements of a non-degenerate Gaussian vector and so has non-zero variance, therefore on the compact region $[0,2\pi]$ these variances are bounded away from zero. Since these are Gaussian fields, their univariate densities are bounded above by a constant times the inverse of their standard deviations, so we see that the density of each $g_i(\theta_{mj})$ is bounded above uniformly in $m$ and $j$. We can therefore sum the expression on either side of \eqref{e:Bulinskaya alternative} over $j$. Since $\text{Length}(I_{mj})$ converges to zero uniformly in $j$, so does the integrand in each term of the sum, and so we conclude that
	\begin{displaymath}
		\sum_j\mathbb{P}(A_\delta\cap E_{mj})\to 0
	\end{displaymath}
	as $m\to\infty$. Since this is true for any $\delta>0$ and $\mathbb{P}(A_\delta)\geq 1-\delta$ we conclude that there is almost surely no $\theta\in[0,2\pi]$ such that $g_1(\theta)=g_2(\theta)=0$, and hence $f$ almost surely has the third property of a Morse function.

	The proof that $f_{\mathcal{C}(n)}$ satisfies the third property of a Morse function is near identical. The argument above can be repeated for the boundary $S^1\times\{-n,n\}$. Finally, $f_\mathbb{T}$ trivially satisfies the third property of a Morse function.
	
	Next we consider the fourth property of a Morse function. We first suppose $f$ is aperiodic and define $h_1:T_k\to\R^5$ where $T_k=\left\{(s,t)\in\overline{B(R)}^2\:\middle|\:\lvert s-t\rvert\geq 1/k\right\}$ by
	\begin{displaymath}
		h_1(s,t)=\begin{pmatrix}
			\nabla f(s)\\
			\nabla f(t)\\
			f(s)-f(t)
		\end{pmatrix} .
	\end{displaymath}
	Bulinskaya's lemma (Lemma~11.2.10 of \cite{RFG}) states that $\mathbb{P}(h_1^{-1}(0)=\emptyset)=1$ provided that $h_1\in C^1(T_k)$ almost surely and the univariate densities of $h_1$ are bounded uniformly in $T_k$. By assumption $h_1\in C^1_\text{loc}(\R^2)$ and so, since $h^1$ is Gaussian, we need only show that the determinant of the covariance matrix of $(\nabla f(s),\nabla f(t),f(s)-f(t))$ is bounded away from zero on $T_k$. Since $f$ is aperiodic, $f(s)-f(t)$ is a non-degenerate Gaussian variable for all $s\neq t$, and so by Conditions \ref{Conditions 1} (in particular using stationarity of $f$), the determinant of this covariance matrix is non-zero for all $s\neq t$. Since this determinant is continuous in $(s,t)$ it is bounded away from zero on the compact set $T_k$. Taking the countable union of $T_k$ for $k\in\mathbb{N}$ shows that almost surely there are no points $s,t\in B(R)$ with $s\neq t$ such that $\nabla f(s)=\nabla f(t)=0$ and $f(s)=f(t)$.
	
	Let $f_\partial(\theta)$ be a parametrisation of $f|_{\partial B(R)}$. To show that $f$ has no two tangent points at the same level and no tangent points at the level of any critical point, we apply identical arguments to the following two functions:
	\begin{displaymath}
		h_2(\theta,\omega)=\begin{pmatrix}
			f^\prime_\partial(\theta)\\
			f^\prime_\partial(\omega)\\
			f_\partial(\theta)-f_\partial(\omega)
		\end{pmatrix}
		\quad \text{and} \quad
		h_3(t,\theta)=
		\begin{pmatrix}
			\nabla f(t)\\
			f^\prime_\partial(\theta)\\
			f(t)-f_\partial(\theta)
		\end{pmatrix} .
	\end{displaymath}
	This completes the proof that $f|_{\overline{B(R)}}$ is a Morse function when $f$ is aperiodic. When $f$ is doubly periodic, we can repeat these arguments with
	\begin{displaymath}
		T_k=\left\{(s,t)\in P_k^2:\lvert s-t\rvert\geq 1/k\right\}
	\end{displaymath}
	where $P_k=\{ay+bz:a,b\in[0,1-1/k]\}$ and $y,z$ are the periodic vectors of $f$, to show that $f_\mathbb{T}$ is Morse. Finally in the singly periodic case we consider $C_k=[0,y_1-1/k]\times[-n,n]$ and $T_k=\left\{(s,t)\in C_k^2:\lvert s-t\rvert\geq 1/k\right\}$ where $y=(y_1,0)$ is the periodic vector of $f$. In both of these cases, the choice of domain ensures that $f(t)-f(s)$ is non-degenerate, so that Conditions \ref{Conditions 1} can be used. This completes the proof of the first three points of the lemma.
	
	\textbf{(4)}.\ Let $L$ be a line segment of length $R$ and $u$ a unit length vector parallel to $L$. Applying a Kac-Rice formula (for the particular form, see Corollary 11.2.2 of \cite{RFG}) to $f|_L$ and using the stationarity of $f$ along with the independence of $\nabla f(t)$ and $\nabla^2 f(t)$, we see that
	\begin{align*}
		\mathbb{E}(N_\text{tang}(L))&=\int_L\mathbb{E}(\lvert\partial_{uu} f(t)\rvert\:|\:\partial_u f(t)=0)p_{\partial_u f(t)}(0) \, dt\\
		&=R\; p_{\partial_u f(0)}(0)\mathbb{E}(\lvert\partial_{uu} f(0)\rvert) =: c_\theta R
	\end{align*}
	where $p_{\partial_u f(0)}$ is the probability density of $\partial_u f(0)$, and $c_\theta<\infty$ since $\partial_{uu} f(0)$ can be expressed as a linear combination of components of the non-degenerate Gaussian vector $\nabla^2 f(0)$ and so has finite moments of all orders.
	
	\textbf{(5)}.\ Applying the same Kac-Rice formula to $f$, and using the same arguments as above, shows that
	\begin{align*}
		\mathbb{E}(N_\text{crit}(B(R)))&=\int_{B(R)}\mathbb{E}\left(\left\lvert\det\nabla^2 f(t)\right\rvert\:\middle|\:\nabla f(t)=0\right)p_{\nabla f(t)}(0) \, dt\\
		&=\pi R^2p_{\nabla f(0)}(0)\mathbb{E}\left(\left\lvert\det\nabla^2 f(0)\right\rvert\right) =: c R^2.
	\end{align*}
	
	\textbf{(6)}.\ We now explicitly parametrise $f|_{\partial B(R)}$ as
	$g(\theta):=f(R\cos(\theta),R\sin(\theta))$ and note that
	\begin{equation}
		\begin{aligned}\label{e:tangent point bound}
			g^\prime(\theta)=&-R\sin(\theta)\partial_xf(p)+R\cos(\theta)\partial_yf(p)\\
			g^{\prime\prime}(\theta)=&-R\cos(\theta)\partial_xf(p)-R\sin(\theta)\partial_yf(p)\\
			&+R^2\sin^2(\theta)\partial_{xx}f(p)-2R^2\sin(\theta)\cos(\theta)\partial_{xy}f(p)+R^2\cos^2(\theta)\partial_{yy}f(p)
		\end{aligned}
	\end{equation}
	where $p=(R\cos(\theta),R\sin(\theta))$. Since $\nabla f(p)$ and $\nabla^2 f(p)$ are independent, a simple calculation shows that $(g^\prime(\theta),g^{\prime\prime}(\theta))$ has non-degenerate distribution, so we can apply a Kac-Rice formula (specifically, Corollary 11.2.2 of \cite{RFG}) to show that
	\begin{equation}\label{e:kac rice tangent}
		\begin{aligned}
			\mathbb{E}(N_\text{tang}(\partial B(R)))&=
			\mathbb{E}\left(\#\{\theta\in[0,2\pi]\:\big|\:g^\prime(\theta)=0\}\right)\\
			&=\int_0^{2\pi}\mathbb{E}\left(\lvert g^{\prime\prime}(\theta)\rvert\:\big|\:g^\prime(\theta)=0\right)p_{g^\prime(\theta)}(0)\, d\theta .
		\end{aligned}
	\end{equation}
	Since $f$ is stationary
	\begin{displaymath}
		\text{Var}\left(g^\prime(\theta)\right)=R^2\mathbb{E}\left(\left(-\sin\left(\theta\right)\partial_xf(0)+\cos\left(\theta\right)\partial_yf(0)\right)^2\right).
	\end{displaymath}
	The expectation on the right side of this equation is non-zero for each $\theta$, since $\nabla f(0)$ is non-degenerate, and by compactness is bounded away from zero uniformly in $\theta$. Therefore, by considering the density of a Gaussian random variable, there exists $c_0>0$ such that $p_{g^\prime(\theta)}(0)\leq c_0/R$ for all $\theta\in[0,2\pi]$ and $R>0$.

	Using this observation, along with the fact that $\nabla f(0)$ is independent of $\nabla^2 f(0)$, and substituting \eqref{e:tangent point bound} into \eqref{e:kac rice tangent}, we see that there exists a constant $c_1>0$ independent of $\theta$ and $R$ such that
	\begin{align*}
		\mathbb{E}(N_\text{tang}(\partial B(R)))&\leq c_1 R\int_0^{2\pi }\mathbb{E}\left(\lvert \partial_{xx}f(0)\rvert+\lvert \partial_{yy}f(0)\rvert+\lvert \partial_{xy}f(0)\rvert\right)d\theta\\
		&\quad+c_1\int_0^{2\pi}\mathbb{E}\left(\lvert \partial_xf(0)\rvert+\lvert \partial_yf(0)\rvert\:\big|\:\sin(\theta)\partial_xf(p)=\cos(\theta)\partial_yf(p)\right)d\theta\\
		&=O(R) . 
	\end{align*}
\end{proof}

We now introduce the deterministic relationship between level sets and critical points that is the foundation of our results. We denote the number of critical points of $f$ in $B(R)$ of type $h$ with level in $[\ell_1,\ell_2]$ by $N_{h,R}[\ell_1,\ell_2]$, where $h=m^+,m^-,s^+,s^-$ denotes local maxima, local minima, upper connected saddles and lower connected saddles respectively (for an aperiodic function that is Morse on $\R^2$, Definition~\ref{d:lower connected aperiodic} defines the upper/lower connected saddle points; the definitions in the general case will be given in Section~\ref{s:critical points}). Let us also generalise our earlier notation: for a deterministic, planar function $f$, we denote the number of components of $\{f=\ell\}$ and $\{f\geq\ell\}$ in $B(R)$ by $N_{LS,R}(\ell)$ and $N_{ES,R}(\ell)$ respectively.

In the case of a singly periodic function $f:\R^2\to\R$ we also need to define internal and external rectangular tilings of $B(x,R)$ which have a finite number of horizontal and vertical line segments as boundary. Let $(y_1, 0)$ be the periodic vector of $f$, and let $S(n_1,n_2)=[n_1y_1,(n_1+1)y_1]\times[n_2,n_2+1]$ where $n_1,n_2\in\mathbb{Z}$. We define $B_\text{int}(x,R)$ to be the union over $n_1,n_2\in\mathbb{Z}$ of all $S(n_1,n_2)$ contained in $B(x,R)$. Similarly we define $B_\text{ext}(x,R)$ to be the union over $n_1,n_2\in\mathbb{Z}$ of all $S(n_1,n_2)$ which intersect $B(x,R)$. 

\begin{lemma}\label{l:main topological}
	Let $f:\R^2\rightarrow\R$ be a deterministic function. Suppose first that $f$ is aperiodic and assume that $f$ and $f|_{\overline{B(R)}}$ are Morse. Then there exists $c>0$, independent of $f$, such that for each $\ell\in\R$,
	\begin{equation}
		\label{e:top1}
		N_{LS,R}(\ell)= N_{m^+,R}[\ell,\infty)-N_{s^-,R}[\ell,\infty) +N_{s^+,R}(\ell,\infty)-N_{m^-,R}(\ell,\infty)+\eta_{\ell,R}
	\end{equation}
	and
	\begin{equation}
		\label{e:top2}
		N_{ES,R}(\ell)= N_{m^+,R}[\ell,\infty)-N_{s^-,R}[\ell,\infty)+\gamma_{\ell,R}
	\end{equation}
	where
	\begin{equation}
		\label{e:top3}
		\max\{\lvert\eta_{\ell,R}\rvert,\lvert\gamma_{\ell,R}\rvert\}\leq c N_\text{tang}(\partial B(R)).
	\end{equation}
	Suppose instead that $f:\R^2\rightarrow\R$ is doubly periodic and assume that $f_\mathbb{T}$ is Morse and $f|_{\partial P}$ has a finite number of local extrema. Then there exists a constant $c_f > 0$ depending only on $P$, such that, for each $R > 0$ and $\ell\in\R$,
	\eqref{e:top1} and \eqref{e:top2} hold with \eqref{e:top3} replaced by
	\begin{equation*}
		\max\{\lvert\eta_{\ell,R}\rvert,\lvert\gamma_{\ell,R}\rvert\}\leq c_f\cdot\left(N_\text{crit}(P)\cdot R+N_\text{tang}(\partial B(R))\right) .
	\end{equation*}
	Finally, suppose that $f:\R^2\rightarrow\R$ is singly periodic with periodic vector $(y,0)$ and assume that $f_{\mathcal{C}(n)}$ is Morse for each $n\in\mathbb{Z}$ and that $N_\text{tang}(\{0\}\times[-2R,2R])<\infty$. Then \eqref{e:top1} and \eqref{e:top2} still hold with \eqref{e:top3} replaced by
	\begin{align*}
		\max\{\lvert\eta_{\ell,R}\rvert,\lvert\gamma_{\ell,R}\rvert\}\leq c_f\Bigg(
		N_\text{tang}&(\partial B_\text{int}(R))+N_\text{tang}(\partial B_\text{ext}(R))+N_\text{crit}(B(R+r_f)\backslash B(R-r_f))\Bigg)
	\end{align*}
	where $c_f$ and $r_f$ are constants that depend only on the periodic vector $y=(y_1,0)$ of $f$ (so in particular, they are independent of $\ell$ and $R$).
	
\end{lemma}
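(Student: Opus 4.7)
The plan is to establish \eqref{e:top1} and \eqref{e:top2} by sweeping $\ell$ from $+\infty$ downwards and tracking the jumps in $N_{ES,R}(\ell)$ and $N_{LS,R}(\ell)$. I would handle the aperiodic case first. For $\ell > \max_{\overline{B(0,R)}} f$ both counts vanish. The key initial observation is that $\ell \mapsto N_{ES,R}(\ell)$ and $\ell \mapsto N_{LS,R}(\ell)$ are piecewise constant, with jumps only at critical values of $f$ in $B(0,R)$ or at tangent values of $f$ on $\partial B(0,R)$: on any interval between consecutive such values, the restricted excursion and level sets inside $\overline{B(0,R)}$ deform continuously via the gradient flow, which is a standard consequence of $f$ and $f|_{\overline{B(0,R)}}$ being Morse.

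\textbf{Local analysis at interior critical points.} At each interior critical value, the Morse lemma together with the upper/lower connectivity dichotomy for saddles dictates the jump in each count as $\ell$ decreases through the critical level. At a local maximum a new small excursion disk appears, so both $N_{ES,R}$ and $N_{LS,R}$ jump by $+1$. At a local minimum a hole in an excursion component closes, leaving $N_{ES,R}$ unchanged while the small loop around the minimum shrinks to a point, giving $N_{LS,R}$ a jump of $-1$. At an upper connected saddle the two upper sectors already lie in the same global component of $\{f>\ell\}$, so $N_{ES,R}$ is unchanged, while the level curve passes through a figure-8 and separates into two simple loops, yielding $N_{LS,R}$ a jump of $+1$. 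At a lower connected saddle two distinct components of $\{f>\ell\}$ merge, producing in $N_{ES,R}$ a jump of $-1$, and correspondingly two level loops merge into one, giving $N_{LS,R}$ a jump of $-1$. Summing these contributions from $\ell=+\infty$ down yields precisely the main terms of \eqref{e:top1} and \eqref{e:top2}; the small $[\ell,\infty)$ vs $(\ell,\infty)$ mismatches at critical values can be absorbed into the error terms.

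\textbf{Boundary effects and periodic cases.} A level or excursion component can transition between ``contained in $B(0,R)$'' and ``not contained'' only at a tangent value $\ell = f(t)$ with $t \in \partial B(0,R)$. Enumerating the local configurations of $\{f=\ell\}$ near such a tangent point (tangent from inside versus outside, with various local intersection patterns) shows each such event changes $N_{ES,R}$ and $N_{LS,R}$ by at most a universal constant, which gives \eqref{e:top3}. For the doubly periodic case I would lift the torus Morse function to $\R^2$ and run the same sweep; the new complication is that $B(0,R)$ overlaps $O(R^2)$ translates of the fundamental parallelogram $P$ and $\partial B(0,R)$ crosses $O(R)$ of the $P$-grid lines, so in addition to the boundary tangents one picks up $O(R)$ ``artificial'' crossings on the parallelogram boundaries, yielding the claimed bound $c_f(R + N_\text{tang}(\partial B(0,R)))$. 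The singly periodic case is analogous, using the internal and external rectangular tilings $B_\text{int}(0,R)$ and $B_\text{ext}(0,R)$ to align the sweep with the cylinder's periodic structure, with the annular term $N_\text{crit}(B(0,R+r_f)\setminus B(0,R-r_f))$ absorbing the mismatch between $B(0,R)$ and its tilings.

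\textbf{Main obstacle.} The principal difficulty is making the global topological conclusions at saddle points fully rigorous, especially for level sets: distinguishing ``figure-8 splits into two loops'' from ``two loops merge into one'' requires a careful Jordan-curve argument rooted in the upper/lower connectivity definition, and must handle potentially exotic global configurations such as excursion components with several boundary loops. A secondary difficulty is pinning down the correct universal constant for the effect of a single tangent event, particularly in the periodic cases, where a single component on $\mathbb{T}$ or $\mathcal{C}$ can unwind into many disjoint plane components when restricted to $B(0,R)$; tracking this unwinding is precisely what produces the additional periodic error terms in the lemma.
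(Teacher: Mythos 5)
Your strategy for the excursion-set identity \eqref{e:top2} — sweep the level downwards from $+\infty$ and read off the jump at each critical point from the Morse-theoretic local picture — is the same as the paper's (its Lemma~\ref{l:excursion sets manifolds}), and your jump signs are correct. However, your error accounting has a genuine gap: you assert that $N_{ES,R}(\ell)$ can deviate from the main term $N_{m^+,R}[\ell,\infty)-N_{s^-,R}[\ell,\infty)$ only through events at tangent values of $f|_{\partial B(0,R)}$. That is false. A deviation also occurs at every lower connected saddle for which \emph{all} of the excursion components created just above the saddle level meet $\partial B(0,R)$ or are unbounded — the paper calls these saddles \emph{four-arm in} $\overline{B(0,R)}$. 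At such a saddle the main term jumps by $-1$ while $N_{ES,R}$ does not change, and this happens at a critical value, not a tangent value. To obtain \eqref{e:top3} you must therefore bound the \emph{number of four-arm saddles} by a constant times $N_\text{tang}(\partial B(0,R))$; this is the content of the paper's Lemma~\ref{l:four arm}, proved by an inductively constructed injection from four-arm saddles to tangent points, and it is a global combinatorial argument, not an enumeration of local configurations near tangent points. Your proposal neither supplies this step nor identifies it as missing, so as written the bound \eqref{e:top3} is not established.

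On the remaining parts your route genuinely diverges from the paper's, in a way that makes your task harder than necessary. For the level-set identity \eqref{e:top1} you track splittings and mergings of level loops directly, and you correctly flag the resulting Jordan-curve bookkeeping as the main obstacle; the paper sidesteps this entirely by forming the bipartite graph on components of $\{f\geq\ell\}$ and $\{f\leq\ell\}$ whose edges are components of $\{f=\ell\}$, noting it is acyclic, and applying Euler's formula to reduce \eqref{e:top1} to two applications of \eqref{e:top2} (to $f$ and to $-f$) at the cost of a further $O(N_\text{tang}(\partial B(0,R)))$ error. Likewise, in the periodic cases the paper does not lift to $\R^2$ and re-run the sweep over $B(0,R)$: it applies the manifold count once to $f_\mathbb{T}$ (resp.\ $f_{\mathcal{C}(m_n)}$), then tiles $B(0,R)$ by translated fundamental domains and counts tiles via a lattice-point estimate, which is what actually produces the $c_f\cdot R$ term and the $B_\text{int}/B_\text{ext}$ and annular critical-point terms in the statement. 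Your ``$O(R)$ artificial crossings'' heuristic would need to be converted into an argument of this kind to yield the stated error bounds.
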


\noindent The proof of Lemma~\ref{l:main topological} is contained in Section~\ref{s:topological}. The error term in the above estimate can be intuitively understood as the result of boundary effects from working with the domain $B(R)$ in the definition of $N_{LS, R}$ and $N_{ES, R}$, and although it appears quite complicated (especially in the case of singly periodic functions), when applied to the Gaussian field $f$ it is bounded in expectation by $O(R)$ as a result of Lemma~\ref{Gaussian fields main theorem}.

We are now ready to state the core technical results of the paper, which are versions of Theorems~\ref{t:main level},~\ref{t:main excursion} and~\ref{t:integral equality} that hold for arbitrary random fields (i.e.\ not necessarily Gaussian) using only the properties contained in Conditions~\ref{conditions main results}.

\begin{proposition}\label{NS integral equality}
	Let $f$ be a stationary random field satisfying Conditions~\ref{conditions main results}. For each $\ell\in\R$
	\begin{displaymath}
		\mathbb{E}\left(N_{LS,R}(\ell)\right)=c_{LS}(\ell)\cdot\pi R^2+O(R)
	\end{displaymath}
	as $R\to\infty$, where
	\begin{displaymath}
		c_{LS}(\ell):=\frac{1}{\pi}\mathbb{E}\left(N_{m^+,1}[\ell,\infty)-N_{s^-,1}[\ell,\infty)+N_{s^+,1}(\ell,\infty)-N_{m^-,1}(\ell,\infty)\right).
	\end{displaymath}
	The constant implied by the $O(\cdot)$ notation may depend on the distribution of $f$ but is independent of $\ell$. If in addition $f$ is ergodic, then
	\begin{displaymath}
		\frac{1}{\pi R^2}N_{LS,R}(\ell)\xrightarrow{L^1,a.s.}c_{LS}(\ell).
	\end{displaymath}
\end{proposition}

\begin{proposition}\label{ES integral equality}
	Let $f$ be a stationary random field satisfying Conditions~\ref{conditions main results}. For each $\ell\in\R$
	\begin{displaymath}
		\mathbb{E}\left(N_{ES,R}(\ell)\right)=c_{ES}(\ell)\cdot\pi R^2+O(R)
	\end{displaymath}
	as $R\to\infty$, where
	\begin{displaymath}
		c_{ES}(\ell):=\frac{1}{\pi}\mathbb{E}(N_{m^+,1}[\ell,\infty)-N_{s^-,1}[\ell,\infty)).
	\end{displaymath}
	The constant implied by the $O(\cdot)$ notation may depend on the distribution of $f$ but is independent of $\ell$. If in addition $f$ is ergodic, then
	\begin{displaymath}
		\frac{1}{\pi R^2}N_{ES,R}(\ell)\xrightarrow{L^1,a.s.}c_{ES}(\ell).
	\end{displaymath}
\end{proposition}
\begin{remark}
	Since $f$ is stationary, we could equivalently define $c_{LS}(\ell)$ in Proposition~\ref{NS integral equality} as
	\begin{displaymath}
		c_{LS}(\ell)=\frac{1}{\pi R^2}\mathbb{E}\left(N_{m^+,R}[\ell,\infty)-N_{s^-,R}[\ell,\infty)+N_{s^+,R}(\ell,\infty)-N_{m^-,R}(\ell,\infty)\right)
	\end{displaymath}
	for any $R>0$ or make an analogous definition for $c_{ES}(\ell)$ in Proposition~\ref{ES integral equality}.
\end{remark}

Together with Lemma~\ref{Gaussian fields main theorem} and Proposition~\ref{p:density existence}, these propositions imply Theorems~\ref{t:main level} and~\ref{t:main excursion}, and also the first two parts of Theorems ~\ref{t:integral equality}.

\begin{proof}[Proposition~\ref{NS integral equality}]
	As $f$ is stationary, the expected number of critical points of $f$ of a particular type in a domain is proportional to the area of the domain. Since the field satisfies Conditions~\ref{conditions main results} we can apply Lemma~\ref{l:main topological}, and taking expectations yields
	\begin{displaymath}
		\mathbb{E}(N_{LS,R}(\ell))=\pi R^2c_{LS}(\ell)+O(R) . 
	\end{displaymath}
	Sending $R\to\infty$ proves the first two statements of the proposition.
	
	The remainder of the proof follows the general roadmap of the original derivation of the existence of the Nazarov-Sodin constant in \cite{SodinNazarov2015asymptotic}. Suppose that $f$ is ergodic and let $N_{h,R}^{(u)}$ denote the number of critical points of type $h$ in $B(u,R)$ with level in $[\ell,\infty)$ or $(\ell,\infty)$ for $h=m^+,s^+$ and $h=m^-,s^-$ respectively. The `sandwich estimate' of \cite{SodinNazarov2015asymptotic} (Lemma~1) can be slightly altered to show that for any $r\in(0,R)$
	\begin{displaymath}
		\frac{1}{\pi R^2}\int_{B(R-r)}\frac{N_{h,r}^{(u)}}{\pi r^2} \, du\leq\frac{N_{h,R}^{(0)}}{\pi R^2}\leq\frac{1}{\pi R^2}\int_{B(R+r)}\frac{N_{h,r}^{(u)}}{\pi r^2} \, du .
	\end{displaymath}
	Applying Wiener's ergodic theorem (see \cite[Section 6.1]{SodinNazarov2015asymptotic}) both of these integrals converge almost surely and in $L^1$ to the limit $\mathbb{E}\left(N_{h,1}^{(0)}\right)/\pi$. So in particular, $N_{h,R}^{(0)}/(\pi R^2)$ has the same limit, and
	\begin{displaymath}
		\frac{1}{\pi R^2}\left(N_{m^+,R}^{(0)}+N_{s^+,R}^{(0)}-N_{s^-,R}^{(0)}-N_{m^-,R}^{(0)}\right)\xrightarrow{L^1}c_{LS}(\ell). 
	\end{displaymath}
	Applying Lemma~\ref{l:main topological} with the bound on $\mathbb{E}(\lvert\eta_{\ell,R}\rvert)$ implied by Conditions~\ref{conditions main results} shows that
	\begin{align*}
		\frac{1}{\pi R^2}\mathbb{E}\left(\left\lvert N_{LS,R}(\ell)-N_{m^+,R}^{(0)}-N_{s^+,R}^{(0)}+N_{s^-,R}^{(0)}+N_{m^-,R}^{(0)}\right\rvert\right)=o(1)
	\end{align*}
	as $R\rightarrow\infty$. Combining these results completes the proof of $L^1$ convergence.
	
	We now extend our notation by defining $N_{LS,r}^{(u)}$ to be the number of components of $\{f=\ell\}$ in $B(u,r)$. The original `sandwich estimate' of \cite{SodinNazarov2015asymptotic} states that
	\begin{displaymath}
		\frac{1}{\pi R^2}\int_{B(R-r)}\frac{N_{LS,r}^{(u)}}{\pi r^2}\, du\leq\frac{N_{LS,R}^{(0)}}{\pi R^2}\leq\frac{1}{\pi R^2}\int_{B(R+r)}\frac{\tilde{N}_{LS,r}^{(u)}}{\pi r^2} \, du
	\end{displaymath}
	where $\tilde{N}_{LS,r}^{(u)}$ is the number of components of $\{f=\ell\}$ which intersect $\overline{B(u,r)}$. Since 
	\begin{displaymath}
		\tilde{N}_{LS,r}^{(u)}=N_{LS,r}^{(u)}+O\left(\#\{x\in\partial B(u,r)\:|\:f(x)=\ell\}\right)
	\end{displaymath}
	we can rearrange this estimate as
	\begin{equation}\label{e:main theorem 2}
		\begin{aligned}
			\left\lvert\frac{N_{LS,R}^{(0)}}{\pi R^2}-\frac{1}{\pi R^2}\int_{B(R)}\frac{N_{LS,r}^{(u)}}{\pi r^2}\, du\right\rvert\leq &\frac{1}{\pi R^2}\int_{B(R+r)\backslash B(R-r)}\frac{N_{LS,r}^{(u)}}{\pi r^2} \, du\\
			&+\frac{c_0}{\pi R^2}\int_{B(R+r)}\frac{\#\{x\in\partial B(u,r)\:|\:f(x)=\ell\}}{\pi r^2}\, du
		\end{aligned}
	\end{equation}
	for some universal constant $c_0>0$. Applying Lemma~\ref{l:main topological} inside the integral term on the left hand side of \eqref{e:main theorem 2} shows that
	\begin{equation}
		\begin{aligned}\label{e:main theorem}
			&\left\lvert\frac{N_{LS,R}^{(0)}}{\pi R^2}-\frac{1}{\pi R^2}\int_{B(R)}\frac{N_{m^+,r}^{(u)}}{\pi r^2}+\frac{N_{s^+,r}^{(u)}}{\pi r^2}-\frac{N_{s^-,r}^{(u)}}{\pi r^2}-\frac{N_{m^-,r}^{(u)}}{\pi r^2}du\right\rvert\\
			&\quad\qquad\qquad\qquad\qquad\qquad\leq\frac{1}{\pi R^2}\int_{B(R+r)\backslash B(R-r)}\frac{N_{LS,r}^{(u)}}{\pi r^2} \, du\\
			&\quad\qquad\qquad\qquad\qquad\qquad\quad+\frac{c_0}{\pi R^2}\int_{B(R+r)}\frac{\#\{x\in\partial B(u,r)\:|\:f(x)=\ell\}}{\pi r^2} \, du\\
			&\quad\qquad\qquad\qquad\qquad\qquad\quad+\frac{c_f}{\pi R^2}\int_{B(R)}\frac{N_\text{crit}(P)\cdot r+N_\text{tang}(B(u,r))}{\pi r^2} \, du\\
			&\quad\qquad\qquad\qquad\qquad\qquad\quad+\frac{c_f}{\pi R^2}\int_{B(R)}\frac{N_\text{tang}(B_\text{int}(u,r))+N_\text{tang}(B_\text{ext}(u,r))}{\pi r^2}\, du\\
			&\quad\qquad\qquad\qquad\qquad\qquad\quad+\frac{c_f}{\pi R^2}\int_{B(R)}\frac{N_\text{crit}(B(u,r+r_f)\backslash B(u,r-r_f))}{\pi r^2} \, du .
		\end{aligned}
	\end{equation}
	(The upper bound here is a result of adding the upper bounds on the error terms in Lemma~\ref{l:main topological} in the cases that $f$ is aperiodic, doubly periodic and singly periodic respectively, so that we can deal with all three cases at once.) From Wiener's ergodic theorem, the integral term within the absolute value signs will converge almost surely to $c_{LS}(\ell)$ and the first integral term on the right hand side will converge almost surely to zero. Applying the same argument to the remaining integral terms shows that they will each converge to a constant, $\Phi_1(r)$, $\Phi_2(r)$, $\Phi_3(r)$ and $\Phi_4(r)$ respectively, where
	\begin{align*}
		\Phi_1(r)&=\frac{c_0}{\pi r^2}\mathbb{E}\left(\#\{x\in\partial B(r)\:|\:f(x)=\ell\}\right) , \\
		\Phi_2(r)&=\frac{c_f\mathbb{E}(N_\text{crit}(P))}{\pi r}+\frac{c_f}{\pi r^2}\mathbb{E}\left(N_\text{tang}(B(r))\right) , \\
		\Phi_3(r)&=\frac{c_f}{\pi r^2}\left(\mathbb{E}\left(N_\text{tang}(B_\text{int}(0,r))\right)+\mathbb{E}\left(N_\text{tang}(B_\text{ext}(0,r))\right)\right) , \\
		\Phi_4(r)&=\frac{c_f}{\pi r^2}\mathbb{E}\left(N_\text{crit}(B(u,r+r_f)\backslash B(u,r-r_f))\right) .
	\end{align*}
	We now fix $r>0$ and take the limit superior of \eqref{e:main theorem} as $R\to\infty$ to show that
	\begin{equation}\label{e:main theorem 3}
		\limsup_{R\rightarrow\infty}\left\lvert\frac{N_{LS,R}^{(0)}}{\pi R^2}-c_{LS}(\ell)\right\rvert\leq \Phi_1(r)+\Phi_2(r)+\Phi_3(r)+\Phi_4(r)
	\end{equation}
	almost surely. Since the number of boundary points of $f$ at level $\ell$ is deterministically bounded above by a constant times the number of tangent points, we see that $\Phi_1(r)\leq c_1\Phi_2(r)$ for some $c_1>0$. Conditions~\ref{conditions main results} imply that $\Phi_2(r)+\Phi_3(r)+\Phi_4(r)=o(1)$ as $r\rightarrow\infty$. Therefore we may take a countable sequence $r_n\rightarrow\infty$, such that \eqref{e:main theorem 3} holds almost surely for each $r_n$ and the right hand side of \eqref{e:main theorem 3} becomes arbitrarily small, to show that $N_{LS,R}^{(0)}/(\pi R^2)$ converges almost surely to $c_{LS}(\ell)$.
\end{proof}

\begin{proof}[Proof (Proposition~\ref{ES integral equality})]
	This follows the proof of Proposition~\ref{NS integral equality} almost exactly. Taking expectations of the second part of Lemma~\ref{l:main topological}, using the stationarity of $f$ and the bound on $\mathbb{E}(N_\text{tang}(\partial B(R)))$ implied by Conditions~\ref{conditions main results} proves the first two statements of the theorem.
	
	The proof of the remainder of the theorem follows identically by defining $N_{ES,r}^{(u)}$ as the number of components of $\{f\geq\ell\}$ contained in $B(u,r)$, $\tilde{N}_{ES,r}^{(u)}$ as the number of components of $\{f\geq\ell\}$ intersecting $\overline{B(u,r)}$ and noting that the difference in these two terms is also bounded above by the number of tangent points of $\partial B(u,r)$.
\end{proof}

To complete the proof of Theorem~\ref{t:integral equality}, it remains only to show the joint continuity of $c_{LS}$ and $c_{ES}$ with respect to the level and spectral measure.

\begin{proof}[Proof (Theorem~\ref{t:integral equality} - Joint continuity)]
	By Prokhorov's theorem the weak-$*$ topology on $\mathcal{P}_c$ is metrisable and so we can use the sequential definition of continuity for $c_{LS}$ and $c_{ES}$. Let $(\rho_n,\ell_n)$ be a sequence in $\mathcal{P}_c\times\R$ converging to $(\rho,\ell)$. By the triangle inequality
	\begin{equation}\label{Joint continuity 1}
		\lvert c_{LS}(\rho_n,\ell_n)-c_{LS}(\rho,\ell)\rvert\leq\lvert c_{LS}(\rho_n,\ell_n)-c_{LS}(\rho_n,\ell)\rvert+\lvert c_{LS}(\rho_n,\ell)-c_{LS}(\rho,\ell)\rvert . 
	\end{equation}
	Theorem 1.3 of \cite{kurlberg2017variation} states that the second term on the right hand side converges to $0$ with $n$ in the special case $\ell=0$. However the proof of this theorem can be repeated verbatim replacing the field $f_\rho$ with $f_\rho-\ell$ to show that $\lvert c_{LS}(\rho_n,\ell)-c_{LS}(\rho,\ell)\rvert\to 0$.
	
	Now assume that $\ell_n\leq\ell$. By the first part of Theorem~\ref{t:integral equality} we see that
	\begin{equation}\label{Joint continuity 2}
		\lvert c_{LS}(\rho_n,\ell_n)-c_{LS}(\rho_n,\ell)\rvert\leq (4/\pi)\mathbb{E}_{\rho_n}\left(N_\text{crit}^{(n)}[\ell_n,\ell]\right)
	\end{equation}
	where $N_\text{crit}^{(n)}[\ell_n,\ell]$ denotes the number of critical points of $f_n$ (the Gaussian field with spectral measure $\rho_n$) in the circle of radius $1$ with level in $[\ell_n,\ell]$. By the Kac-Rice theorem (Corollary 11.2.2 of \cite{RFG})
	\begin{equation}
		\begin{aligned}\label{Joint continuity 3}
			\mathbb{E}_{\rho_n}\left(N_\text{crit}^{(n)}[\ell_n,\ell]\right)&=\int_{B(1)}\mathbb{E}\left(\left\lvert\det\nabla^2 f_n(t)\right\rvert\id_{f_n(t)\in[\ell_n,\ell]}\:\middle|\:\nabla f_n(t)=0\right)p_{\nabla f_n(t)}(0) \, dt\\
			&=\pi\,\mathbb{E}\left(\left\lvert\det\nabla^2 f_n(0)\right\rvert\id_{f_n(0)\in[\ell_n,\ell]}\right)p_{\nabla f_n(0)}(0)\\
			&\leq\mathbb{E}\left(\left\lvert\det\nabla^2 f_n(0)\right\rvert^2\right)^\frac{1}{2}\mathbb{P}
			\left(f_n(0)\in[\ell_n,\ell]\right)^\frac{1}{2}\frac{1}{2}\frac{1}{\sqrt{\det\text{Var}(\nabla f_n(0))}}
		\end{aligned}
	\end{equation}
	where we have used the independence of a field and its gradient at a point along with the Cauchy-Schwarz inequality. It is well known that the covariance structure of a stationary Gaussian field along with its derivatives at a point can be expressed in terms of the spectral measure (see \cite[Chapter 5]{RFG}). For example,
	\begin{displaymath}
		\text{Var}(\partial_x f_n(0))=\int_{\R^2}\lambda_1^2\: d\rho_n(\lambda)
	\end{displaymath}
	with similar expressions for other derivatives and covariances. Since all spectral measures we are considering are supported on $\overline{B(1)}$, the definition of weak-$*$ convergence implies that the covariance structure associated with each $\rho_n$ at the origin converges to that of $\rho$. Specifically, if $f$ denotes the field with spectral measure $\rho$, then
	\begin{displaymath}
		\text{Var}(f_n(0),\nabla f_n(0),\nabla^2 f_n(0))\rightarrow\text{Var}(f(0),\nabla f(0),\nabla^2 f(0)) .
	\end{displaymath}
	Applying this to \eqref{Joint continuity 3}, we see that $\det\text{Var}(\nabla f_n(0))$ is bounded away from $0$ in $n$. Similarly $\mathbb{E}\left(\lvert\det\nabla^2 f_n(0)\rvert^2\right)$ is uniformly bounded above in $n$. Finally we note that $\text{Var}(f_n(0))$ is uniformly bounded away from $0$, so that choosing $\ell_n$ sufficiently close to $\ell$ ensures that $\mathbb{P}\left(f_n(0)\in[\ell_n,\ell]\right)$ is arbitrarily small. An identical argument works for $\ell_n\geq\ell$, and so combining these observations with\ \eqref{Joint continuity 1}--\eqref{Joint continuity 3} shows that
	\begin{displaymath}
		\lvert c_{LS}(\rho_n,\ell_n)-c_{LS}(\rho,\ell)\rvert\xrightarrow{n\to\infty}0
	\end{displaymath}
	as required. The proof of continuity of $c_{ES}(\rho,\ell)$ is almost identical. Although Theorem 1.3 of \cite{kurlberg2017variation} is stated only for level sets, the proof of this result can be adapted to apply to excursion sets with no changes of any substance.
\end{proof}

\begin{proof}[Proof (Corollary~\ref{c:monotonicity})]
	Since $f$ is isotropic we can use the explicitly derived critical point densities for local maxima, local minima and saddle points from \cite{cheng2015expected}. We recall that these are parametrised in terms of $\lambda\in (0, \sqrt{2}]$ and $\eta^2\in [0, \infty)$ as defined by \eqref{e:isotropic parameters}. If $0<\lambda<\sqrt{2}$ then
	\begin{align*}
		p_{m^+}(x)=p_{m^-}(-x)&=\frac{1}{\pi\eta^2}\Bigg(\lambda^2(x^2-1)\phi(x)\Phi\left(\frac{\lambda x}{\sqrt{2-\lambda^2}}\right)+\frac{\lambda x\sqrt{2-\lambda^2}}{2\pi}e^{-\frac{x^2}{2-\lambda^2}}\\
		&\qquad\qquad\qquad\qquad+\frac{\sqrt{2}}{\sqrt{\pi(3-\lambda^2)}}e^{-\frac{3x^2}{2(3-\lambda^2)}}\Phi\left(\frac{\lambda x}{\sqrt{(3-\lambda^2)(2-\lambda^2)}}\right)\Bigg) \\
		p_s(x)&=\frac{1}{\pi\eta^2}\frac{\sqrt{2}}{\sqrt{\pi(3-\lambda^2)}}e^{-\frac{3x^2}{2(3-\lambda^2)}}
	\end{align*}
	where $\phi$ and $\Phi$ denote the standard normal probability density function and cumulative density function respectively. If $\lambda=\sqrt{2}$ then
	\begin{align*}
		p_{m^+}(x)=p_{m^-}(-x)&=\frac{\sqrt{2}}{\pi^{3/2}\eta^2}\left((x^2-1)e^{-\frac{x^2}{2}}+e^{-\frac{3x^2}{2}}\right)\id_{x\geq 0}\\
		p_s(x)&=\frac{\sqrt{2}}{\pi^{3/2}\eta^2}e^{-\frac{3x^2}{2}} . 
	\end{align*}
	By Theorem~\ref{t:integral equality},
	\begin{align*}
		c_{LS}(\ell+\epsilon)-c_{LS}(\ell)\leq \int_\ell^{\ell+\epsilon} p_{m^-}(x)+p_s(x)-p_{m^+}(x) \, dx .
	\end{align*}
	We denote the integrand above by $I(x)$. In the case $\lambda\in(0,\sqrt{2})$, evaluating this expression using the standard Gaussian inequality $1-\Phi(x)\leq(1/x)\phi(x)$ shows that
	\begin{displaymath}
		I(x)\leq \frac{1}{\pi\eta^2}\left(\frac{\sqrt{2-\lambda^2}}{\pi}\frac{1}{x}\frac{2-\lambda^2}{\lambda}e^{-\frac{x^2}{2-\lambda^2}}-\frac{\lambda^2}{\sqrt{2\pi}}(x^2-1)e^{-\frac{x^2}{2}}\right)
	\end{displaymath}
	which is negative for $x>\sqrt{2}/\lambda$. For $\lambda=\sqrt{2}$ and $x>0$
	\begin{displaymath}
		I(x)\leq-\frac{1}{4\pi\sqrt{2\pi}}(x^2-1)e^{-\frac{x^2}{2}}
	\end{displaymath}
	which is negative for $x>1=\sqrt{2}/\lambda$. Similarly we have
	\begin{displaymath}
		c_{ES}(\ell+\epsilon)-c_{ES}(\ell)\leq\int_\ell^{\ell+\epsilon} p_s(x)-p_{m^+}(x) \, dx ,
	\end{displaymath}
	which is less than our upper bound for $c_{LS}(\ell+\epsilon)-c_{LS}(\ell)$, and the first result follows.

	When $\lambda=\sqrt{2}$ and $\ell\leq 0$ we see that $p_{m^+}(\ell)=0$ and so by Theorem~\ref{t:integral equality}
	\begin{displaymath}
		c_{ES}(\ell+\epsilon)-c_{ES}(\ell)=\int_\ell^{\ell+\epsilon}p_{s^-}(x) \, dx\geq 0 .
	\end{displaymath}
	Therefore $c_{ES}$ is weakly increasing in this case.
	
\end{proof}


\section{Critical point densities}\label{s:critical points}
In this section we prove Proposition~\ref{p:density existence}. We begin by giving the definition for lower and upper connected saddle points in full generality. Let $M$ be a manifold and let $A\subset M$. We say that $A$ is \textit{simple} if it is compact, connected and every loop in $A$ (i.e.\ every continuous map $h:S^1\to A$) is $M$-contractible. We will be solely interested in the case that $A$ is a component of an excursion set of $f:M\to\R$. For such $A$, and in the case that $M$ is simply connected (e.g.\ for $M =\mathbb{R}^2$ or $B( R)$), the condition of being simple is just the same as being bounded.

\begin{definition}\label{d:lower connected general}
	Let $M$ be a $2$-dimensional Riemannian manifold without boundary and let $f:M\to\R$ be a Morse function with a saddle point $x\in M$ such that $f(x)=\ell$. For $c\leq\ell$, let $A_c$ denote the component of $\{f\geq c\}$ containing $x$ and for $c>\ell$ let $A_c=A_\ell\cap\{f\geq c\}$. We say that $x$ is \textit{lower connected} if either of the following conditions hold for $\epsilon>0$ sufficiently small:
	\begin{enumerate}
		\item $A_{\ell-\epsilon}$ is simple and $A_{\ell+\epsilon}$ consists of two simple components; or
		\item $A_{\ell-\epsilon}$ is not simple but $A_{\ell+\epsilon}$ has a simple component.
	\end{enumerate}
	We say that $x$ is \textit{upper connected} if it is a lower connected saddle for $-f$.
\end{definition}

\begin{figure*}[h!]
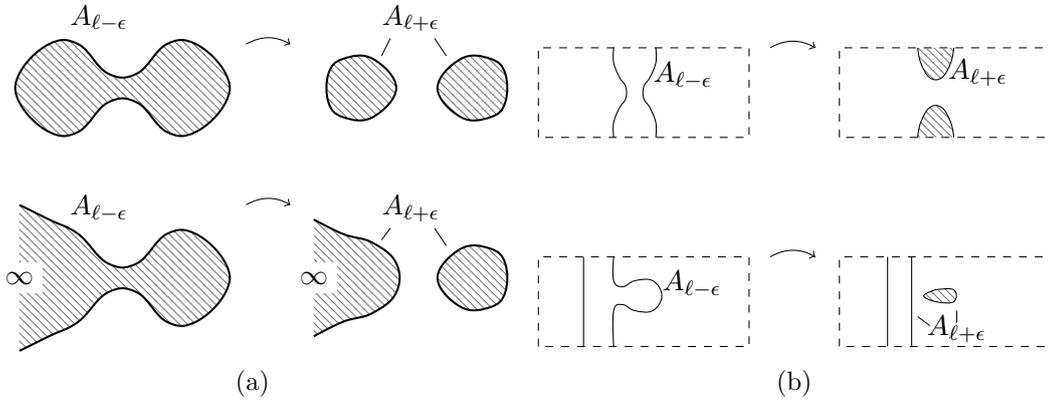

	\centering
	\subfloat[\label{Fig_12}]{\input{Fig_12.tikz}}\qquad
	\subfloat[\label{Fig_13}]{\input{Fig_13.tikz}}
	\caption{Subfigures~\ref{Fig_12} and~\ref{Fig_13} give two examples of the excursion sets (shaded) at a level below and above a lower connected saddle point in $\R^2$ and $\mathbb{T}$ respectively (the dashed rectangles in~\ref{Fig_13} are identified with the torus by the standard quotient relation).}\label{Fig_12+13}
\end{figure*}

When $f$ is an aperiodic Gaussian field, by Lemma~\ref{Gaussian fields main theorem} it will be Morse on $\R^2$ so we may take this as our Riemannian manifold and use the definition of lower/upper connected saddle points directly (we show in Section~\ref{s:topological} that this definition coincides with that given in Section~\ref{s:introduction}). If $f$ is doubly periodic, recall that it is completely specified by its values on the associated parallelogram $P$ which we identify with the torus. We then say that a saddle point of $f$ is lower connected if it corresponds to a lower connected saddle of $f_\mathbb{T}$ by the definition above (see Figure~\ref{Fig_14}). Similarly if $f$ is singly periodic we say that a saddle point of $f$ is lower connected if it corresponds to a lower connected saddle point of $f_\mathcal{C}$. Upper connected saddle points are defined analogously.

\begin{figure*}[h!]
	\centering
	\input{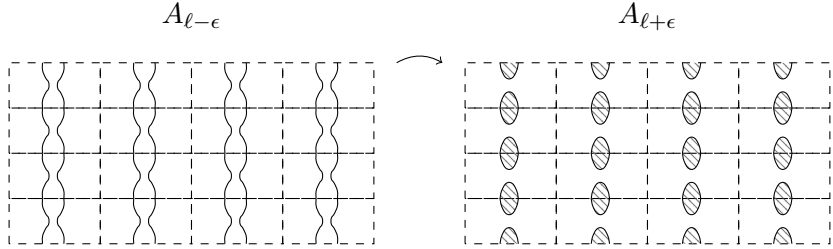}
	\caption{Passing through the lower connected saddle points of a doubly periodic function increases the number of compact excursion sets.}\label{Fig_14}
\end{figure*}

The following lemma shows that our above definitions partition the set of saddle points in all cases of interest.

\begin{lemma}\label{l:no infinite four arm}
	Let $f$ be a stationary random field satisfying Conditions~\ref{conditions main results}. Then with probability one all saddle points of $f$ are either upper connected or lower connected but not both.
\end{lemma}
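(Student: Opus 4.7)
The strategy is to split the claim into a deterministic topological dichotomy plus a probability-zero estimate for a "degenerate" configuration of the level set at a saddle.

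\textbf{Step 1 (Local picture at a saddle).} By the Morse properties guaranteed by Conditions~\ref{conditions main results}, at each saddle $x_0$ with $f(x_0)=\ell$ the level set $\{f=\ell\}$ locally consists of two smooth arcs crossing transversely at $x_0$ (the two null-directions of the Hessian), partitioning a small disk $D$ around $x_0$ into four cyclically arranged sectors $U_1,L_1,U_2,L_2$ with $U_i\subset\{f>\ell\}$ and $L_i\subset\{f<\ell\}$. Let $\alpha_1,\ldots,\alpha_4$ be the four arms of $\{f=\ell\}\setminus\{x_0\}$ emanating from $x_0$ in cyclic order. Because no other critical point lies at level $\ell$, each $\alpha_i$ continues as a smooth $1$-manifold until it either returns to $x_0$ (possibly after smoothly turning around at regular points of the level set) or escapes every compact subset of the ambient manifold $M\in\{\R^2,\mathbb T,\mathcal C\}$ (on $\mathbb T$ or $\mathcal C$ "escape" must be interpreted as wrapping into a non-contractible loop).

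\textbf{Step 2 (Deterministic XOR dichotomy).} Suppose first that at least one arm returns to $x_0$, yielding a simple closed loop $\gamma\subset\{f=\ell\}$ through $x_0$ whose complement has a component $V$ that is a topological disk in $M$. By the cyclic arrangement of the sectors and the transversality of the two branches at $x_0$, the closure of $V$ meets $x_0$ along a unique pair of adjacent arms, and the interior of $V$ near $x_0$ lies in exactly one of the four sectors, say $U_1$ (the case of a lower sector is analogous). Using the Morse property, $V\cap\{f\geq\ell+\epsilon\}$ is a simple component of $A_{\ell+\epsilon}$ for all small $\epsilon>0$, and a direct inspection of Definition~\ref{d:lower connected general} then forces exactly one of the two cases of lower connected (if $V$ encloses a lower sector) or upper connected (if $V$ encloses an upper sector) to hold at $x_0$. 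A parallel argument rules out the possibility that $x_0$ is simultaneously upper \emph{and} lower connected: such a coincidence would require two disjoint disk regions bounded by level-set loops, each enclosing a sector of opposite type, which by the cyclic ordering of sectors and the fact that the two branches intersect only at $x_0$ contradicts planarity (Jordan curve theorem) in $\R^2$ and the corresponding contractibility/separation argument on $\mathbb T$ or $\mathcal C$.

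\textbf{Step 3 (Ruling out the degenerate configuration).} The only case not covered by Step 2 is the "infinite four-arm" configuration in which all four arms escape every compact set (equivalently, on $\mathbb T$ or $\mathcal C$, all four arms form non-contractible loops). In this configuration none of the arms pair up to form a disk-bounding loop, and by the above analysis the saddle is neither upper nor lower connected. We show this event has probability zero. For the aperiodic case, the event at a saddle $x_0\in B(0,R)$ that all four arms exit $B(0,R)$ forces $\{f=\ell\}\cap\partial B(0,R)$ to contain at least four points in any neighbourhood of a tangent point of the level set at that radius; using Conditions~(5)--(6), the expected number of saddles in $B(0,R)$ exhibiting this property at scale $R$ is bounded by a constant multiple of $R$, and monotonicity together with $\sigma$-additivity along a sequence $R_n\to\infty$ rules out the event almost surely. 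In the doubly-periodic case, the compactness of $\mathbb T$ together with the Morse assumption forces every arm eventually to return to $x_0$, so the bad event is empty. The singly-periodic case is handled analogously using $\mathcal C(n)$ and letting $n\to\infty$.

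\textbf{Step 4 (Conclusion).} Combining Steps 2 and 3, with probability one every saddle of $f$ has at least one arm returning to $x_0$, and for every such saddle exactly one of upper/lower connected holds.

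\textbf{Main obstacle.} The most delicate piece is the XOR dichotomy in Step 2: although the "at least one" direction is a fairly standard Morse-theoretic handle-attachment argument combined with the Jordan curve theorem, the "at most one" direction requires carefully using the cyclic arrangement of the four sectors and the simplicity condition in Definition~\ref{d:lower connected general} to preclude two disjoint disk regions of opposite type. Step 3 is more technical in the aperiodic case, since the "infinite four-arm" event is a genuinely global event about the level set; however the finite density of tangent points on $\partial B(0,R)$ provided by Condition~(6) together with stationarity suffices to push the expected number of bad saddles to $o(R^2)$.
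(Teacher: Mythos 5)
There is a genuine gap in Step 2: the ``not both'' direction is \emph{not} purely deterministic, and your Jordan-curve argument misses the configuration that actually forces a probabilistic input. Concretely, consider a saddle $x$ at level $\ell$ for which $A_{\ell-\epsilon}$ is unbounded while $A_{\ell+\epsilon}$ is bounded and \emph{connected} for all small $\epsilon$ (an unbounded excursion component that collapses to a single bounded piece as the level crosses $\ell$; this can happen when the component escapes to infinity only through regions where $f$ is between $\ell-\epsilon$ and $\ell+\epsilon$ for every $\epsilon$). By condition (2) of Definition~\ref{d:lower connected general} such a saddle is lower connected, and nothing in the cyclic arrangement of the four sectors or planarity prevents it from simultaneously being upper connected --- your dichotomy ``some arm closes up into a disk-bounding loop vs.\ all four arms escape'' does not capture this case, because here the relevant unboundedness is an asymptotic property of the excursion component rather than of the four arms, and $A_{\ell+\epsilon}$ being connected means no second disk region of the opposite type is needed for ambiguity to arise. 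The paper excludes exactly this configuration probabilistically: each such saddle in $B(0,R)$ forces a distinct nonempty trace $A_{x_i,-\epsilon}\cap\partial B(0,R)$, hence a distinct tangent point on $\partial B(0,R)$, so their expected number is $O(R)$; stationarity would make it $c_0 R^2$, forcing $c_0=0$. Your Step 3 rules out only the all-four-arms-escape (infinite-four-arm) configuration, which is the degenerate case relevant to the ``at least one'' direction, not to the ``not both'' direction. Without the additional probabilistic exclusion your XOR claim is unproved.

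Two further points. First, your treatment of the periodic cases is too quick: on $\mathbb{T}$ (and on $\mathcal{C}$) the arms do return to $x_0$ by compactness, but the resulting loops may be non-$\mathbb{T}$-contractible, in which case no disk-bounding loop enclosing a sector exists and your Step 2 gives nothing; the claim that ``the bad event is empty'' is false. One must show that at least one of the sets $C_{\pm}\setminus\{x\}$ is simple, which in the paper requires an argument with homotopy classes of loops on the torus (two non-contractible loops with non-proportional classes must intersect), and the ``not both'' direction on $\mathbb{T}$/$\mathcal{C}$ again needs the Morse $1$-cell attachment applied to sublevel sets rather than the Jordan curve theorem. Second, the counting in your Step 3 (``at least four points in any neighbourhood of a tangent point'') is not a valid bound as stated, since distinct four-arm saddles at different levels can a priori share boundary crossings; one needs either the injective association of saddles to tangent points constructed in Lemma~\ref{l:four arm}, or the disjointness of the boundary traces $B_i^-$ used in the paper's probabilistic step. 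The overall architecture (local four-arm picture, deterministic dichotomy, probabilistic exclusion of a degenerate configuration) is the right one, but as written the degenerate configuration you exclude is not the one that threatens the exclusivity claim.
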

This result is proven in Section~\ref{s:topological}. We are now ready to prove Proposition~\ref{p:density existence}.

\begin{proof}[Proof (Proposition~\ref{p:density existence})]
	Let $f$ satisfy Conditions \ref{Conditions 1}. We fix a compact $\Omega\subset\R^2$ such that $\partial\Omega$ has finite Hausdorff-$1$ measure and consider $N_h(\ell)$, the number of critical points of $f|_\Omega$ of type $h$ with value greater than $\ell$, where $h=m^-,s,m^+$ correspond to local minima, saddle points and local maxima respectively. If $(f(0),\nabla^2 f(0))$ has a non-degenerate distribution, then by a Kac-Rice formula (specifically, Corollary 11.2.2 of \cite{RFG}, which requires that $\partial\Omega$ has finite Hausdorff-1 measure)
	\begin{equation}
		\begin{aligned}\label{Density existence}
			\mathbb{E}(N_h(\ell))&=\int_\Omega\mathbb{E}\left(\left\lvert\det\nabla^2 f(t)\right\rvert\id_{\text{Index}\nabla^2 f(t)=i,f(t)>\ell}\:\middle|\:\nabla f(t)=0\right)p_{\nabla f(t)}(0) \, dt\\
			&=\text{Area}(\Omega)p_{\nabla f(0)}(0)\mathbb{E}\left(\left\lvert\det\nabla^2 f(0)\right\rvert\id_{\text{Index}\nabla^2 f(0)=i,f(0)>\ell}\right)\\
			&=\text{Area}(\Omega)p_{\nabla f(0)}(0)\int_\ell^\infty\mathbb{E}\left(\left\lvert\det\nabla^2 f(0)\right\rvert\id_{\text{Index}\nabla^2 f(0)=i}\:\middle|\:f(0)=u\right)\phi(u) \, du
		\end{aligned}
	\end{equation}
	where $i$ is the index corresponding to $h$ and $\phi$ denotes the standard Gaussian probability density. The third equality follows from the definition of conditioning on a Gaussian variable. If $(f(0),\nabla^2 f(0))$ has a degenerate distribution then $f(0)$ can be expressed as a linear combination of the elements of $\nabla^2 f(0)$ almost surely. Substituting in this expression for $f(0)$ allows us to apply Kac-Rice and the arguments above to derive \eqref{Density existence} in this case too. Then, by Gaussian regression, $\mathbb{E}(\lvert\det\nabla^2 f(0)\rvert\id_{\text{Index}\nabla^2 f(0)=i}\:|\:f(0)=u)$ is a continuous function of $u$ (see, e.g., Proposition~1.2 of \cite{azais2009level}). This proves the existence and continuity of the densities $p_{m^+},p_{m^-}$ and $p_s$, and the fact that $p_{m^+}(x) = p_{m^-}(-x)$ follows from the symmetry of $f$.
	
	Since $\mathbb{E}(N_{s}[0,\ell,])=\int_0^\ell p_s(x)dx$, we know that $\mathbb{E}(N_{s}[0,\ell])$ is absolutely continuous in $\ell$. It is also clear that for any $\ell_1<\ell_2$, $\mathbb{E}(N_{s^+}[\ell_1,\ell_2])\leq\mathbb{E}(N_{s}[\ell_1,\ell_2])$ so $\mathbb{E}(N_{s^+}[0,\ell])$ is absolutely continuous in $\ell$. Therefore there exists a function $p_{s^+}:\R\rightarrow[0,\infty)$ such that
	\begin{displaymath}
		\mathbb{E}(N_{s^+}[0,\ell])=\int_0^\ell p_{s^+}(x) \, dx.
	\end{displaymath}
	Since $\mathbb{E}(N_s[0,\infty))$ is finite, the monotone convergence theorem shows that $p_{s^+}\in L^1(\R)$. By symmetry of the Gaussian distribution, and the definition of lower connected saddles, this also shows the existence of $p_{s^-}(x)=p_{s^+}(-x)$. The fact that $p_{s^-}+p_{s^+}=p_s$ follows from Lemma~\ref{l:no infinite four arm}.
\end{proof}


\section{Topological lemmas}\label{s:topological}
\noindent In this section we prove the deterministic Lemma~\ref{l:main topological} using topological arguments. We also establish Lemma~\ref{l:no infinite four arm} and Proposition~\ref{p:cilleruello c_NS} using similar methods, thereby completing the proof of all results in the paper.

To prove Lemmas~\ref{l:main topological} and~\ref{l:no infinite four arm} we require several results from Morse theory which we now introduce. We note that several aspects of this theory require us to work with compact manifolds, and so in the aperiodic and singly periodic cases we work with $B( R)$ and $\mathcal{C}(n)$ rather than directly with $\mathbb{R}^2$ and $\mathcal{C}$; in the doubly periodic case, by contrast, we work directly with the torus $\mathbb{T}$. We also emphasise that we only ever work with the five examples of manifolds just mentioned, so it is sufficient to have them in mind.

Recall that, for a Morse function $f$ defined on a manifold $M$, the tangent points of $f$ are defined as the critical points of~$f|_{\partial M}$.

\begin{theorem}[Theorem 7 of \cite{handron2002generalized}]\label{Morse 1}
	Let $M$ be a compact $n$-dimensional Riemannian manifold and let $f:M\to\R$ be a Morse function. If $f$ has no critical or tangent points with value in $[a,b]$ then $\{f\geq b\}$ is homotopy equivalent to $\{f\geq a\}$.
\end{theorem}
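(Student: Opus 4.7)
The plan is to prove this by constructing a deformation retraction of $\{f\geq a\}$ onto $\{f\geq b\}$ obtained by flowing along a suitable ``modified gradient'' vector field $X$. Specifically, I would aim to construct a smooth vector field $X$ on an open neighborhood of the compact strip $A:=f^{-1}([a,b])$ satisfying (i) $X\cdot f=1$ on $A$, and (ii) $X(p)\in T_p(\partial M)$ at every $p\in\partial M\cap A$. Property (ii) guarantees that the associated flow $\phi_t$ preserves $M$, while property (i) gives $f(\phi_t(p))=f(p)+t$ on $A$, so that any $p$ with $f(p)\in[a,b]$ can be pushed to $\{f=b\}$ in time $T(p)=b-f(p)\geq 0$.

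Given such an $X$, the deformation retraction is routine: define $H:\{f\geq a\}\times[0,1]\to\{f\geq a\}$ by $H(p,s)=\phi_{sT(p)}(p)$ for $p\in A$ and $H(p,s)=p$ for $f(p)\geq b$. Continuity follows from $X\cdot f=1$ (so $T$ is continuous and agrees on the overlap); $H(\cdot,0)$ is the identity, $H(\cdot,1)$ sends $\{f\geq a\}$ into $\{f\geq b\}$, and $H(p,s)=p$ whenever $f(p)\geq b$. This exhibits $\{f\geq b\}$ as a strong deformation retract of $\{f\geq a\}$, and in particular establishes the claimed homotopy equivalence.

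The real content, and the step I expect to be the main obstacle, is the construction of $X$. In the interior of $M$ one can simply take $\nabla f/|\nabla f|^2$, which is well defined on $A$ because the no-critical-point hypothesis ensures $\nabla f\neq 0$ there. At a boundary point $p\in\partial M\cap A$, the inward gradient flow would exit $M$ whenever the normal component $\nabla^N f(p)$ points outward, so one must use a tangential substitute. The key observation is that, since $p$ is not a tangent point, $p$ is not a critical point of $f|_{\partial M}$, i.e.\ the tangential component $\nabla^T f(p)$ of $\nabla f(p)$ is non-zero. Thus the vector $v:=\nabla^T f(p)/|\nabla^T f(p)|^2\in T_p(\partial M)$ satisfies $v\cdot\nabla f(p)=v\cdot\nabla^T f(p)=1$, providing a tangential candidate with $v\cdot f=1$.

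To promote these local choices to a global smooth vector field, I would cover $A$ by finitely many open sets $\{U_i\}$ such that on each $U_i$ there is a smooth local vector field $X_i$ with $X_i\cdot f>0$ and, if $U_i$ meets $\partial M$, with $X_i$ tangent to $\partial M$ on $U_i\cap\partial M$ (using the full gradient on interior patches and an extension of the tangential projection into a collar neighborhood on boundary patches). A partition of unity subordinate to $\{U_i\}$ then yields a smooth global $X_0$ with $X_0\cdot f>0$ on $A$ and $X_0$ tangent to $\partial M$ on $\partial M\cap A$; rescaling by $X_0\cdot f$ produces the desired $X$. The main subtlety is organizing the collar extension and interpolation so that tangency to $\partial M$ is preserved and the resulting field is smooth, but everything rests on the two hypotheses: $\nabla f\neq 0$ in the interior and $\nabla^T f\neq 0$ on the boundary, both of which are guaranteed by the absence of critical and tangent points in $[a,b]$.
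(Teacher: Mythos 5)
This statement is imported from the literature (Theorem~7 of the cited reference) and the paper gives no proof of its own, remarking only that it follows by ``methods very similar to those of the standard proofs for manifolds without boundary.'' Your proposal is precisely that standard argument --- rescale the gradient so that $X\cdot f=1$, replace it near $\partial M$ by the tangential component (nonzero there because the no-tangent-point hypothesis says $f|_{\partial M}$ has no critical values in $[a,b]$), glue with a partition of unity, and retract along the flow --- and it is correct and matches the approach of the cited source.
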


We define a $k$-cell to be a copy of the closed unit disc in $\R^k$ and temporarily denote this by $B_k$. If $Y$ is a topological space then we define the following operation to be `attaching a $k$-cell to $Y$'. First we find a continuous function $g:\partial B_k\to Y$, then we take the disjoint union $Y\sqcup B_k$ and identify each point in $\partial B_k$ with its image under $g$. By attaching a $0$-cell, we simply mean taking the disjoint union of $Y$ and a single point.
\begin{theorem}[Theorem 8 of \cite{handron2002generalized}]\label{Morse 2}
	Let $M$ be a compact $2$-dimensional Riemannian manifold and let $f:M\to\R$ be a Morse function. If $t$ is a critical point of $f$ of index $k$ with $f(t)=\ell$, then for $\epsilon>0$ sufficiently small, $\{f\geq\ell-\epsilon\}$ is homotopy equivalent to $\{f\geq\ell+\epsilon\}$ with a $(2-k)$-cell attached. If $t$ is a tangent point and $\ell,\epsilon$ are defined in the same way, then $\{f\geq\ell-\epsilon\}$ is homotopy equivalent to either $\{f\geq\ell+\epsilon\}$ or $\{f\geq\ell+\epsilon\}$ with a $k$-cell attached for some $k\in\{0,1,2\}$.
\end{theorem}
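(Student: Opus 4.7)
The plan is to treat interior critical points and boundary tangent points separately, deploying classical Morse theory in the first case and a local boundary-adapted analogue in the second.

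For an interior critical point $t$ of index $k\in\{0,1,2\}$, first apply the Morse lemma to obtain a chart centred at $t$ in which $f-\ell$ has the standard form $-y_1^2-\cdots-y_k^2+y_{k+1}^2+\cdots+y_2^2$ (truncated to the appropriate number of plus/minus terms for each $k$). By property~(4) of Definition~\ref{d:morse} we may shrink $\epsilon$ so that no critical or tangent value other than $\ell$ lies in $[\ell-\epsilon,\ell+\epsilon]$. Fix a small compact neighbourhood $U$ of $t$ inside the chart on which $f$ takes values in $(\ell-\delta,\ell+\delta)$ for some $\delta>\epsilon$. Outside $U$ the function $f$ has no critical or tangent values in $[\ell-\epsilon,\ell+\epsilon]$, so Theorem~\ref{Morse 1} yields a homotopy equivalence between $\{f\geq\ell-\epsilon\}\setminus U$ and $\{f\geq\ell+\epsilon\}\setminus U$, realised concretely by the flow of a cut-off multiple of $-\nabla f/|\nabla f|^2$ (further modified near $\partial M$ so that the flow preserves $M$, which is standard). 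Inside $U$, direct inspection of the local quadratic model exhibits the cell attachment: for $k=0$ one fills in an enclosed disk (a $2$-cell), for $k=1$ one attaches a strip joining the two components of $\{f\geq\ell+\epsilon\}\cap U$ (a $1$-cell), and for $k=2$ a single new point appears (a $0$-cell). In every case the attached cell has dimension $2-k$, and gluing the local and global pictures yields the first assertion.

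For a tangent point $p$, choose local coordinates $(x_1,x_2)$ in which $\partial M=\{x_2=0\}$, $M=\{x_2\geq 0\}$, and $p$ is at the origin. Since $p$ is a critical point of $f|_{\partial M}$ but not of $f$, we have $\partial_1 f(p)=0$ and $\partial_2 f(p)\neq 0$; non-degeneracy of the boundary critical point gives $\partial_{11}f(p)\neq 0$; and Condition~(3) of Definition~\ref{d:morse}, applied with $\nabla f(p)$ parallel to $e_2$ and unit normal $u=e_2$, gives $\partial_{22}f(p)\neq 0$. An implicit-function-theorem argument (first using $f-\ell$ as a replacement for $x_2$ on a neighbourhood where $\partial_2 f\neq 0$, then applying a Morse-type normalisation in the remaining $x_1$ coordinate) produces a chart in which $f-\ell=\sigma x_2+\tau x_1^2$, where $\sigma,\tau\in\{-1,+1\}$ record the signs of $\partial_2 f(p)$ and $\partial_{11}f(p)$ respectively. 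Enumerating the four sign cases, one sees that when $\sigma\tau>0$ no topology change occurs locally (so the two superlevel sets are homotopy equivalent), whereas when $\sigma\tau<0$ a cell of dimension $k\in\{0,1,2\}$ is attached, corresponding geometrically to a new isolated boundary component appearing, a strip along $\partial M$ joining existing components, or an enclosed boundary-adjacent region being filled in. Gluing this local picture with the global flow exactly as in the interior case gives the second assertion.

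The main technical obstacle I expect is the tangent point analysis: establishing the local normal form $\sigma x_2+\tau x_1^2$ rigorously from the four scalar non-degeneracy conditions, constructing the gradient-like vector field near $\partial M$ so that its flow preserves $M$ (i.e.\ is tangent to or inward-pointing along $\partial M$), and verifying in each of the four sign cases exactly which cell (if any) is attached and along what attaching map. The interior case, by contrast, reduces to classical Morse theory for sublevel sets applied to $-f$, since an index-$k$ critical point of $f$ is an index-$(2-k)$ critical point of $-f$, translating the classical $k$-cell attachment for sublevel sets of $-f$ directly into the stated $(2-k)$-cell attachment for superlevel sets of $f$.
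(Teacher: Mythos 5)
The paper does not actually prove this statement: it is imported as Theorem 8 of \cite{handron2002generalized}, with only the remark that the proof mimics the standard boundary-free arguments. So the only benchmark is the classical argument, and your overall architecture (local normal form at the critical or tangent point, glued to a gradient-like flow where Theorem~\ref{Morse 1} applies) is the right one. Your treatment of interior critical points is correct, including the index reversal $k\mapsto 2-k$ obtained by applying the classical sublevel-set statement to $-f$.

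The tangent-point case, however, contains a genuine error. Granting the normal form $f-\ell=\sigma x_2+\tau x_1^2$ on $M=\{x_2\ge 0\}$ (which is obtainable, e.g.\ via $y_2=\sigma\bigl(f(x_1,x_2)-f(x_1,0)\bigr)$ after a one-dimensional Morse normalisation of $f|_{\partial M}$), your criterion that a cell is attached precisely when $\sigma\tau<0$ is false. Whether the topology changes is governed by $\sigma$ alone: a cell is attached to the superlevel set exactly when $\nabla f(p)$ points \emph{out of} $M$ (i.e.\ $\sigma=-1$), and $\tau$ then only determines the dimension of the cell (a $0$-cell if $\tau=-1$, a $1$-cell if $\tau=+1$; a $2$-cell never occurs at a tangent point of a surface). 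Concretely, for $f-\ell=x_2-x_1^2$ one has $\sigma\tau<0$, yet $\{x_2\ge\max(x_1^2-\epsilon,0)\}$ deformation retracts onto $\{x_2\ge x_1^2+\epsilon\}$ and nothing is attached; while for $f-\ell=-x_2-x_1^2$ one has $\sigma\tau>0$, yet $\{f\ge\ell+\epsilon\}$ is locally empty and $\{f\ge\ell-\epsilon\}$ is a nonempty lens, so a $0$-cell \emph{is} attached. Two of your four cases are therefore classified incorrectly. A secondary slip: you invoke Condition~(3) of Definition~\ref{d:morse} to conclude $\partial_{22}f(p)\neq 0$, but at a tangent point $\nabla f(p)$ is \emph{normal} to $\partial M$, not tangent to it, so that condition is vacuous there; in any case $\partial_{22}f(p)\neq 0$ is neither guaranteed nor needed for the normal form. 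Because the theorem's conclusion at tangent points is only the disjunction ``no change, or some cell attached'', the statement survives your misclassification, but the proof as written does not: the local verifications you describe would come out wrong in half the cases.
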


These theorems are proven using methods very similar to those of the standard proofs for manifolds without boundary which can be found in almost any text on Morse theory. 

We now work towards a proof of Lemma~\ref{l:main topological}. First we will need another definition which, in the case of aperiodic or singly periodic fields, identifies a subset of the upper/lower connected saddle points which have unfavourable topological properties (for our purposes). Recall that if $M$ is a manifold then $A\subset M$ is said to be simple if $A$ is compact, connected and every loop in $A$ is $M$-contractible. In the case that $M = \mathbb{R}^2$ or $B( R)$ and $A$ is an excursion set component, this is just the condition that $A$ is bounded.

\begin{definition}
	Let $M_\infty$ be a Riemannian $2$-manifold without boundary, with $f_\infty\in C^2_\text{loc}(M_\infty)$ a Morse function on $M_\infty$, and let $M$ be a compact submanifold of $M_\infty$ with boundary, with $f:=f_\infty|_M$ a Morse function on $M$. Let $x_0$ be a saddle point of $f$ at level $\ell$ and let $A_c$ be defined as in Definition~\ref{d:lower connected general} for $f_\infty$. If $x_0$ is lower connected, then we say that it is \textit{four-arm in} $M$ if, for $\epsilon>0$ sufficiently small, all of the simple components of $A_{\ell+\epsilon}$ intersect $\partial M$. Similarly, we say that an upper connected saddle is \textit{four-arm in} $M$ if it satisfies the previous condition for $-f$ (and $-\ell$).
	
	We say that $x_0$ is an \textit{infinite-four-arm saddle point} if $A_{\ell+\epsilon}$ has two unbounded components for all $\epsilon>0$ sufficiently small.
\end{definition}

\begin{remark}
	In the proof of Lemma~\ref{l:no infinite four arm}, we will show that when $M_\infty=\R^2$ (the most important case for our analysis), the level set at the level of an infinite-four-arm saddle takes the form shown in Figure~\ref{Fig_16}. This corresponds to the way an `infinite-four-arm event' is typically defined in the percolation literature: as two disjoint paths joining a point to infinity which are separated by two `dual' paths joining the same point to infinity. For other choices of $M_\infty$ (such as $M_\infty=\mathcal{C}$) the level set at the level of an infinite-four-arm saddle may look quite different, so that this terminology is less intuitive.
\end{remark}

Let us explain the importance of four-arm saddles. If $f$ is aperiodic, then its lower connected saddle points are defined so as to correspond to an increase in the number of excursion set components in $\R^2$. However if such a saddle point is four-arm in $\overline{B(R)}$, then this increase cannot be observed from inside $\overline{B(R)}$ since the excursion sets that are created when passing through the saddle intersect $\partial B(R)$ (see Figure~\ref{Fig_15+16}). The case when $f$ is singly periodic is similar (in the case that $f$ is doubly periodic we do not have to worry about such saddles). Infinite-four-arm saddle points will be relevant when we prove Lemma~\ref{l:no infinite four arm}. Fortunately we can control the number of four-arm saddles which occur, in terms of the boundary behaviour of $f$. 

\begin{figure*}[h!]
	\centering
	\subfloat[An upper connected saddle that is four-arm in $\overline{B(R)}$.]{\label{Fig_15}\input{Fig_15.tikz}}\qquad
	\subfloat[An infinite-four-arm saddle point in $\R^2$.]{\label{Fig_16}\input{Fig_16.tikz}}
	\caption{Different types of four-arm saddle points.}\label{Fig_15+16}
\end{figure*}

\begin{lemma}\label{l:four arm}
	Let $M_\infty$ be a Riemannian $2$-manifold without boundary and $f_\infty\in C^2_\text{loc}(M_\infty)$. Let $M$ be a compact submanifold of $M_\infty$ with boundary and $f:=f_\infty|_M$ be Morse. Let $N_\text{4-arm}(M)$ be the number of saddle points of $f_\infty$  contained in $M$ which are four-arm in $M$ or infinite-four-arm. Then
	\begin{displaymath}
		N_\text{4-arm}(M)\leq 3 N_\text{tang}(\partial M) .
	\end{displaymath}
\end{lemma}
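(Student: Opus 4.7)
My plan is to bound $N_\text{4-arm}(M)$ by constructing a map from the saddles counted by $N_\text{4-arm}(M)$ to the tangent points on $\partial M$ with multiplicity at most three. First I would reduce to the case of lower connected four-arm saddles: upper connected four-arm saddles are handled by replacing $f$ with $-f$, and infinite-four-arm saddles fit into the same framework since on the compact manifold $M$ any unbounded component of $A_{\ell+\epsilon}$ in $M_\infty$ must meet $\partial M$.

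The key geometric step is as follows. For a lower connected four-arm saddle $x_0 \in M$ at level $\ell$, both simple components $C_1, C_2$ of $A_{\ell+\epsilon}$ intersect $\partial M$. As $\epsilon \to 0$, the boundary $\partial C_i$ in $M_\infty$ converges to a Jordan curve through $x_0$ composed of two of the four arcs of $\{f_\infty = \ell\}$ emanating from $x_0$. Since $x_0$ lies in the interior of $M$ by item (2) of Definition~\ref{d:morse}, the non-emptiness of $C_i \cap \partial M$ forces at least one of the two bounding arcs of $C_i$ to cross $\partial M$ transversely; transversality is guaranteed by item (4) of Definition~\ref{d:morse}, which places saddle levels at values distinct from those of tangent points. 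Each connected component of $C_i \cap \partial M$ is then a closed arc on which $f|_{\partial M} \geq \ell$ with equality only at its endpoints, and hence contains a local maximum of $f|_{\partial M}$, i.e.\ a tangent point of value strictly greater than $\ell$. Since $C_1 \cap \partial M$ and $C_2 \cap \partial M$ are disjoint, each four-arm saddle has at least two associated tangent points above its level.

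I would send each four-arm saddle $x_0$ to the tangent point of smallest value in $(C_1 \cup C_2) \cap \partial M$ (breaking ties arbitrarily). The remaining step---and the main obstacle---is to show the resulting map has multiplicity at most three at each tangent point. For a fixed tangent point $\tau$ of value $v_\tau$, the preimages are four-arm saddles $x_i$ of level $\ell_i < v_\tau$ with $\tau$ in one of their simple upper components $C_j^{(i)}$ and $\tau$ realising the minimum of $f|_{\partial M}$ among tangent points in $C_j^{(i)} \cap \partial M$. As $\ell_i$ varies, the components containing $\tau$ are nested, and at each four-arm split $\tau$ is carried into exactly one of the two simple children. The factor of three should emerge from a careful combinatorial analysis of the Jordan curves $\partial C_j^{(i)}$ near $\partial M$, showing that the minimality condition on $\tau$ permits at most three distinct saddles along the nested chain to map to $\tau$.
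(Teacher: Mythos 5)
Your reduction and the opening geometric observation are sound and match the spirit of the paper's argument: each simple component $C_i$ of $A_{\ell+\epsilon}$ that meets $\partial M$ does so in arcs on which $f|_{\partial M}\geq \ell+\epsilon$ with equality at the endpoints, so each contains a tangent point above the saddle level. But the entire content of Lemma~\ref{l:four arm} is the multiplicity bound for the resulting map, and this is exactly the step you leave unproved ("the factor of three should emerge\dots"). Worse, the specific map you propose --- send $x_0$ to the tangent point of \emph{smallest} value in $(C_1\cup C_2)\cap\partial M$ --- has unbounded multiplicity. Along a nested chain of four-arm saddles $x_1,\dots,x_k$ with levels $\ell_1<\dots<\ell_k$ all below $v_\tau$, the sets $(C_1^{(i)}\cup C_2^{(i)})\cap\partial M$ are decreasing in $i$, so once $\tau$ is the minimal tangent point for the lowest saddle it remains minimal for every later saddle in whose surviving branch it persists. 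A ``caterpillar'' configuration --- a long component crossing $\partial M$ in many arcs, with each saddle pinching off a small simple piece containing only high-value tangent points while the branch containing the low-value $\tau$ survives --- sends all $k$ saddles to $\tau$. (There is also a bookkeeping problem: even if your multiplicity-$3$ claim held for the lower connected class, adding the upper connected class via $-f$ and the infinite-four-arm class would give a constant exceeding $3$; in the paper the $3$ is precisely the number of classes, each contributing an \emph{injection}.)

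The paper avoids this by making the assignment dynamic rather than static. Process the four-arm saddles in increasing order of level and maintain, for each saddle processed so far, a \emph{preliminary} association to a distinct simple component of the current superlevel set that meets $\partial M$. When the component preliminarily associated to some $x_i$ splits at a new four-arm saddle $x_{n+1}$ into two simple boundary-touching children, assign one child to $x_{n+1}$ and \emph{re-assign} $x_i$ to the other child; when a component's maximum value drops below the next saddle level, \emph{finalise} its saddle to the highest tangent point on that component's intersection with $\partial M$ and retire it. Because distinct components at a given level are disjoint, the finalised tangent points are distinct, so the map is injective on each of the three classes (lower connected four-arm, upper connected four-arm via $-f$, and infinite-four-arm), which is where the factor $3$ comes from. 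To repair your proof you would need to replace your static ``minimal tangent point'' rule with some such re-assignment scheme; as written, the argument does not close.
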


Intuitively this bound follows because as we raise the level past a saddle which is four-arm in $\overline{B(R)}$ or an infinite-four-arm saddle we separate two components of $\{f\geq\ell\}\cap\partial B(R)$ (i.e.\ the boundary components are no longer in the same excursion set component). The total number of such separations which may occur is bounded above by the number of boundary excursion components at different levels, which in turn is bounded by the number of tangent points. This is formalised in the proof below.

\begin{proof}
	We describe an algorithm which will create an injective mapping from the lower connected saddle points which are four-arm in $M$ to the tangent points of $f$ in $\partial M$. Let $x_1,\dots,x_m$ denote all such saddle points arranged in order of increasing level, i.e.\ $f(x_i)<f(x_{i+1})$. We fix $\epsilon>0$ such that there are no tangent points with value in $[f(x_i)-\epsilon,f(x_i)+\epsilon]$ for any $i$ and these intervals are disjoint for different $i$. For each $i=1,\dots,m$, we define $S_1^{(i)},\dots,S_{m_i}^{(i)}$ to be the simple components of $\{f\geq f(x_i)+\epsilon\}$ which intersect $\partial M$.
	
	We proceed by induction. Let $A_-$ denote the component of $\{f\geq f(x_1)-\epsilon\}$ containing~$x_1$. Since $x$ is lower connected and four-arm in $M$, $A_-$ will contain some $S_j^{(1)}$ (it may contain two such elements: see Figure~\ref{Fig_17}), and so we make a preliminary association between $x_1$ and $S_j^{(1)}$.
	
	\begin{figure*}[h!]
		\centering
		\input{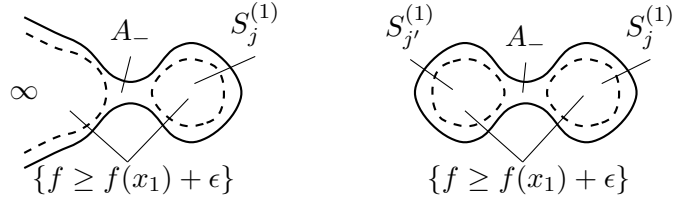}
		\caption{Since $x_1$ is a lower connected saddle point, $A_-$ may contain either one (left) or two (right) simple component of $\{f\geq f(x_1)+\epsilon\}$.}\label{Fig_17}
	\end{figure*}
	
	We now assume inductively that for each $x_1,\dots,x_{n}$ we have either a preliminary association with some $S_j^{(n)}$ (which differs across points) or a final association to some tangent point with value less than $f(x_{n})$.
	
	We consider $x_{n+1}$ and let $A_-$ denote the component of $\{f\geq f(x_{n+1})-\epsilon\}$ containing $x_{n+1}$. First suppose that $A_-$ is contained in some $S_j^{(n)}$ which has a preliminary association with some $x_i$ (where $i\leq n$). Then $A_-$ must be simple, as it is a subset of $S_j^{(n)}$, and so, by the definition of a lower connected four-arm saddle point, this means that $A_-$ must contain two different components $S_p^{(n+1)}$ and $S_q^{(n+1)}$. We then make a preliminary association between $x_{n+1}$ and $S_p^{(n+1)}$ and make a new preliminary association between $x_i$ and $S_q^{(n+1)}$ (so we remove the old association between $x_i$ and $S_j^{(n)}$). If $A_-$ is contained in some $S_j^{(n)}$ which does not have a preliminary association or if $A_-$ is not contained in any $S_j^{(n)}$ then we choose some $S_p^{(n+1)}$ contained in $A_-$ (which must exist since $x_{n+1}$ is four-arm in $M$) and make a preliminary association between this and $x_{n+1}$.
	
	Now suppose that $x_i$, where $i\leq n$, has a preliminary association with $S_j^{(n)}$. If the maximum level of any point in $S_j^{(n)}$ is less than $f(x_{n+1})$ then we make a final association between $x_i$ and the highest such point in $\partial M\cap S_j^{(n)}$, which is by definition a tangent point. We also remove the preliminary association between $x_i$ and $S_j^{(n)}$. Otherwise $S_j^{(n)}$ must be a superset of $S_k^{(n+1)}$ for some $k$, and we then make a new preliminary association between $x_i$ and $S_k^{(n+1)}$. We perform this process for all preliminary associations which remain after the step described in the previous paragraph, which completes the inductive step. This algorithm will cease after the $m$-th step, at which stage if a point $x_i$ has a preliminary association with some $S_j^{(n+1)}$ we make a final association between $x_i$ and the highest tangent point of $S_j^{(n+1)}$.
	
	The end result of this algorithm is an injective mapping, defined by the final associations, from the set of lower connected saddles which are four-arm in $M$ to the tangent points of $f$. An identical argument applied to $-f$ creates such a map for the the upper connected saddles which are four-arm in $M$. A very similar algorithm can be applied to the infinite-four-arm saddles of $f$, except that the inductive step is slightly simpler. Combining these results proves the lemma.
\end{proof}

Our next result is a preliminary version of Lemma~\ref{l:main topological} for Morse functions on manifolds without boundary. This setting allows us to prove the desired relationship between the number of excursion set components and critical points in the simpler case that all critical points have different levels. 

\begin{lemma}\label{l:excursion sets manifolds}
	Let $M_\infty$ be a Riemannian $2$-manifold without boundary and $f\in C^2_\text{loc}(M_\infty)$ be a Morse function. Let $M$ be a compact submanifold of $M_\infty$ and $f:=f_\infty|_M$ be Morse. For $\ell\in\R$, let $N_\text{simple}(\ell)$ be the number of simple components of $\{f\geq\ell\}$ which do not intersect the boundary of $M$ and let $N_{m^+}(\ell)$ and $N_{s^-}(\ell)$, be the number of local maxima and lower connected saddle points respectively of $f$ with level in $[\ell,\infty]$. Then
	\begin{displaymath}
		N_\text{simple}(\ell)=N_{m^+}(\ell)-N_{s^-}(\ell)+\zeta
	\end{displaymath}
	where $\lvert\zeta\rvert\leq 5N_\text{tangent}$. So in particular, if $M$ has no boundary then $\zeta=0$.
\end{lemma}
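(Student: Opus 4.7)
The plan is to sweep the level $\ell$ downward from $+\infty$ and track how $N_{\text{simple}}(\ell)$ changes as $\ell$ crosses critical and tangent values of $f$. Since $f$ is Morse, these values are distinct and finitely many on the compact manifold $M$; for $\ell$ above all of them, $\{f\geq\ell\}=\emptyset$, so $N_{\text{simple}}(\ell)=0$. By Theorem~\ref{Morse 1}, $N_{\text{simple}}(\ell)$ is constant on each open interval free of critical and tangent values (using the fact that the homotopy equivalence there comes from a gradient-flow deformation retract inside $M$, which respects both the property of touching $\partial M$ and the $M$-contractibility of loops). It therefore suffices to compute the jump $\Delta:=N_{\text{simple}}(v-\epsilon)-N_{\text{simple}}(v+\epsilon)$ at each critical or tangent value $v$ and sum the results.

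I would use Theorem~\ref{Morse 2} to identify the local topological change at each crossing and match it to the behaviour of simple interior components. At a local maximum (index $2$), a $0$-cell is attached: locally a small contractible (hence simple) disc appears, and by the Morse condition the maximum is interior, so the new component is counted and $\Delta=+1$. At a local minimum (index $0$), a $2$-cell is attached along a small $M$-contractible loop; this fills in a small disc-shaped hole without changing the component count, and since both the attached disc and its attaching loop lie in $M$, no loop in any component alters its $M$-contractibility status, giving $\Delta=0$.

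The core of the argument is the saddle case. A $1$-cell $h$ is attached to $\{f\geq v+\epsilon\}$ across the saddle $x_0$, connecting the two local lobes of $\{f>v\}$, which sit in components $Q_1,Q_2$ of $\{f\geq v+\epsilon\}$. Either $Q_1\ne Q_2$ (Case~A, producing the merged component $Q'=Q_1\cup h\cup Q_2$) or $Q_1=Q_2=Q$ (Case~B, producing $Q'=Q\cup h$ with an added handle). A perturbation argument shows that $A_{v+\epsilon}=A_v\cap\{f\geq v+\epsilon\}$ consists of exactly these one or two pieces: any other component of $\{f\geq v+\epsilon\}$ meeting $A_v$ would be linked to $x_0$ by a path in $\{f\geq v\}$ that, after perturbing away from regular level-set crossings (where $\nabla f\ne 0$ allows pushing up slightly), enters $x_0$ through one of the two lobes. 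Two structural facts then pin down the saddle's contribution: (i) by a van Kampen argument, in Case~A with $Q_1,Q_2$ both simple the merged $Q'$ is automatically simple, since any loop in $Q'$ is $Q'$-homotopic to a concatenation of loops in $Q_1$, loops in $Q_2$, and $h$-roundtrips, all of which are $M$-contractible; (ii) attaching a $1$-cell never converts a non-simple component into a simple one, since any $M$-non-contractible loop in $Q_i$ remains the same non-contractible loop in $Q'$. A short case analysis over the simple/non-simple statuses of $Q_1,Q_2,Q'$ then shows $\Delta=-1$ precisely when the saddle satisfies Case~1 or Case~2 of Definition~\ref{d:lower connected general}, i.e.\ is lower connected, and $\Delta=0$ in all other saddle configurations — including upper connected saddles and saddles merging two non-simple components.

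Finally, at each tangent value Theorem~\ref{Morse 2} gives either no change or the attachment of a $k$-cell for some $k\in\{0,1,2\}$; such an attachment alters the component count by at most one and the simplicity status of at most a handful of components, so the crude bound $|\Delta|\leq 5$ per tangent value is comfortably sufficient. Summing the jumps over $v>\ell$ yields $N_{\text{simple}}(\ell)=N_{m^+}(\ell)-N_{s^-}(\ell)+\zeta$ with $|\zeta|\leq 5\,N_{\text{tang}}$, and when $\partial M=\emptyset$ there are no tangent values and $\zeta=0$. The main obstacle will be the saddle analysis: showing that the local cell-attachment description together with the simplicity of the individual pieces exhausts all possibilities, and in particular ruling out the anomalous situation of two simple components merging into a non-simple one — which is precisely where the van Kampen observation is essential.
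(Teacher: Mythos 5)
Your overall sweep-and-count structure, the use of Theorems~\ref{Morse 1} and~\ref{Morse 2}, and the treatment of maxima, minima and tangent points all match the paper's proof. But there is a genuine gap in the saddle case, and it is exactly the point where the constant $5$ comes from. You conclude that $\Delta=-1$ \emph{precisely} when the saddle is lower connected and $\Delta=0$ otherwise. This is false when $\partial M\neq\emptyset$: lower-connectedness (Definition~\ref{d:lower connected general}) is a property of $f_\infty$ on the ambient boundaryless manifold $M_\infty$, whereas $N_\text{simple}$ counts only simple components \emph{disjoint from} $\partial M$. A lower connected saddle whose newly created simple components all meet $\partial M$ --- what the paper calls a saddle that is \emph{four-arm in} $M$ (see Figure~\ref{Fig_NSCP_14}) --- is counted in $N_{s^-}(\ell)$ but contributes $\Delta=0$ to $N_\text{simple}$. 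This mismatch must be absorbed into $\zeta$, and nothing in your argument bounds the number of such saddles. The paper needs a separate combinatorial lemma for this (Lemma~\ref{l:four arm}), which constructs an injective assignment of four-arm saddles to tangent points of $\partial M$, yielding $N_\text{4-arm}(M)\leq 3N_\text{tang}(\partial M)$; that lemma is not a routine consequence of the cell-attachment picture and cannot be omitted.

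Relatedly, your error accounting does not close: you allow $\lvert\Delta\rvert\leq 5$ per tangent value, which already exhausts the budget $5N_\text{tang}$ before the four-arm saddles are paid for. The paper's bookkeeping is $\lvert\Delta\rvert\leq 2$ per tangent point (attaching a single $k$-cell changes the count of simple interior components by at most $2$) plus $3N_\text{tang}$ for the four-arm saddles, totalling $5N_\text{tang}$. Your local arguments at the saddle --- the van Kampen observation that merging two simple components along a $1$-cell yields a simple component, and that a $1$-cell cannot make a non-simple component simple --- are correct and essentially the same as the paper's, as is the constancy of $N_\text{simple}$ between critical/tangent values (including preservation of disjointness from $\partial M$). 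One further small omission: when $\ell$ is itself a critical or tangent value you need the observation that $\{f\geq\ell\}=\bigcap_{\epsilon>0}\{f\geq\ell-\epsilon\}$ has the same number of simple components as $\{f\geq\ell-\epsilon\}$ for small $\epsilon$, since the lemma counts critical points with level in the \emph{closed} half-line $[\ell,\infty]$.
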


The proof involves applying the Morse theorems to each critical/tangent point to derive the change in the number of simple components at the corresponding levels and then summing these changes.

\begin{proof}
	First suppose that $f$ has no critical or tangent points with level in $[a,b]$. By Theorem~\ref{Morse 1},~$\{f\geq b\}$ is a deformation retract of $\{f\geq a\}$ under some map $h:\{f\geq a\}\times[0,1]\to\{f\geq a\}$. In particular, this is also true for each component of $\{f\geq b\}$ and the component of $\{f\geq a\}$ in which it is contained. Since $h$ is a homotopy, the number of simple components is the same in each set. If $A$ is a component of $\{f\geq a\}$ which intersects $\partial M$, then we claim that $A\cap\{f\geq b\}$ also intersects $\partial M$. To see why, suppose $A$ intersects $\partial M$ but $A\cap\{f\geq b\}$ does not. Then by considering the infimum $\ell^*$ of $\ell\in[a,b]$ such that $A\cap\{f\geq\ell\}$ does not intersect $\partial M$ and taking a sequence of points in $A\cap\{f\geq\ell^*-1/n\}\cap\partial M$ we see that $A\cap\{f\geq\ell^*\}$ contains a tangent point at level $\ell^*\in[a,b]$, which is a contradiction. Combining all these observations, we see that $N_\text{simple}$ is constant on intervals which contain no critical or tangent points.
	
	Now let $x$ be a critical or tangent point of $f$ at level $c$ and take $\epsilon>0$ small enough to apply Lemma~\ref{Morse 2}. We consider in turn the different types of critical or tangent point and calculate $N_\text{simple}(c-\epsilon)-N_\text{simple}(c+\epsilon)$. Let $A_{c-\epsilon}$ denote the component of $\{f\geq c-\epsilon\}$ containing $x$ and $A_{c+\epsilon}=A_{c-\epsilon}\cap\{f\geq c+\epsilon\}$. We note that to determine $N_\text{simple}(c-\epsilon)-N_\text{simple}(c+\epsilon)$ it is enough to consider $A_{c-\epsilon}$ and $A_{c+\epsilon}$, since by the arguments in the previous paragraph, the number of simple components of $\{f\geq\ell\}$ in $M\backslash A_{c-\epsilon}$ which do not intersect $\partial M$ will be constant as $\ell$ varies in $[c-\epsilon,c+\epsilon]$.
	
	If $x$ is a local maximum, then by Lemma~\ref{Morse 2}, $A_{c-\epsilon}$ is homotopy equivalent to $A_{c+\epsilon}$ with a $0$-cell attached. So in particular $\{f\geq c-\epsilon\}$ has one more component than $\{f\geq c+\epsilon\}$, and this extra component is $M$-contractible. Since the extra component contains a local maximum, (which cannot be in $\partial M$) for $\epsilon>0$ sufficiently small, this component is disjoint from $\partial M$ and so $N_\text{simple}(c-\epsilon)-N_\text{simple}(c+\epsilon)=1$.
	
	If $x$ is a local minimum, then $A_{c-\epsilon}$ is homotopy equivalent to $A_{c+\epsilon}$ with a $2$-cell attached. Attaching a $2$-cell does not change the number of components of $\{f\geq c+\epsilon\}$, and does not affect whether the component it is attached to is simple or not. Once again since the local minimum is not in $\partial M$, restricting $\epsilon$ sufficiently small ensures that the attached $2$-cell does not intersect $\partial M$ and so $N_{\text{simple}}(c-\epsilon)-N_{\text{simple}}(c+\epsilon)=0$.
	
	Next we suppose that $x$ is a saddle point, so that by Lemma~\ref{Morse 2}, $A_{c-\epsilon}$ is homotopy equivalent to $A_{c+\epsilon}$ with a $1$-cell attached. So in particular, $A_{c+\epsilon}$ consists of either one or two components. First we note that if $A_{c-\epsilon}$ is simple and does not intersect $\partial M$, then any components of $A_{c+\epsilon}$ have both these properties, therefore $N_{\text{simple}}(c-\epsilon)-N_{\text{simple}}(c+\epsilon)\leq 0$. If $A_{c-\epsilon}$ is simple but intersects $\partial M$, then by repeating the sequential argument in the first paragraph of this proof we see that $A_{c+\epsilon}$ must intersect $\partial M$ and so cannot consist of two simple components disjoint from the boundary. Furthermore, if $A_{c-\epsilon}$ is not simple, then by the above homotopy, $A_{c+\epsilon}$ cannot consist of two simple components. Combining these two observations show that $N_{\text{simple}}(c-\epsilon)-N_{\text{simple}}(c+\epsilon)\geq -1$. So the number of simple components disjoint from $\partial M$ is either constant or increases by one on passing through the saddle point. If $x$ is not a lower connected saddle, then by definition $N_{\text{simple}}(c-\epsilon)-N_{\text{simple}}(c+\epsilon)=0$, if $x$ is a lower connected saddle which is not four-arm in $M$, then $N_{\text{simple}}(c-\epsilon)-N_{\text{simple}}(c+\epsilon)=-1$ and if $x$ is a lower connected saddle which is four-arm in $M$, then $N_{\text{simple}}(c-\epsilon)-N_{\text{simple}}(c+\epsilon)=0$.
	
	Finally let $x$ be a tangent point, so that $A_{c-\epsilon}$ is homotopy equivalent to $A_{c+\epsilon}$ with a $k$-cell attached for some $k\in\{0,1,2\}$. In each case, $A_{c+\epsilon}$ has at most two simple components disjoint from $\partial M$, so $\lvert N_{\text{simple}}(c-\epsilon)-N_{\text{simple}}(c+\epsilon)\rvert\leq 2$.
	
	Now suppose that there are no critical or tangent points at level $\ell$. Since $N_\text{simple}(a)\to 0$ as $a\to\infty$, we see that $N_\text{simple}(\ell)$ equals the sum of the finite number of jumps $N_{\text{simple}}(c-\epsilon)-N_{\text{simple}}(c+\epsilon)$ at each level $c$ with a critical/tangent point. This sum equals the number of local maxima of $f$ above level $\ell$ minus the corresponding number of lower connected saddles with an error $\zeta$ bounded in absolute value by $N_\text{4-arm}(M)+2N_\text{tang}(M)$. By Lemma~\ref{l:four arm}, $\lvert\zeta\rvert\leq 5N_\text{tang}(M)$.
	
	Finally, we note that $\{f\geq\ell\}=\cap_{\epsilon>0}\{f\geq\ell-\epsilon\}$ and that the intersection of a decreasing family of compact, connected sets is connected, so that $\{f\geq\ell\}$ and $\{f\geq\ell-\epsilon\}$ have the same number of components for $\epsilon$ sufficiently small. Repeating the earlier argument for tangent points shows that these sets have the same number of components which do not intersect $\partial M$. Then, if $f$ has a critical point $x$ at level $\ell$, applying the Morse lemma on a neighbourhood of $x$ and Theorem \ref{Morse 1} outside this neighbourhood shows that $N_{\text{simple}}(\ell-\epsilon)=N_{\text{simple}}(\ell)$. (If no such point exists, we simply apply Theorem \ref{Morse 1} to $M$.) Therefore we can apply the above arguments to the level $\ell-\epsilon$ such that there are no critical or tangent points with level in $[\ell-\epsilon,\ell)$ to prove the lemma.
\end{proof}

With the above preliminary lemma, the proof of Lemma~\ref{l:main topological} in the case of aperiodic functions is straightforward.

\begin{proof}[Proof (Lemma~\ref{l:main topological} in the case of aperiodic functions)]
	Let $f$ be an aperiodic function satisfying the assumptions of Lemma~\ref{l:main topological}. Applying Lemma~\ref{l:excursion sets manifolds} with $M=\overline{B(R)}$ and $M_\infty=\R^2$ gives exactly the stated relationship for excursion sets. To complete the proof, we prove a corresponding relationship for level sets. We fix a level $\ell$ and let $f_R=f|_{\overline{B(R)}}$, we construct a graph on the vertex set
	\begin{displaymath}
		V:=\{\text{Components of }\{f_R\geq\ell\}\}\cup\{\text{Components of }\{f_R\leq\ell\}\}
	\end{displaymath}
	by declaring two vertices to be joined by an edge if they have non-empty intersection. Clearly the graph is bipartite and each edge corresponds to a component of $\{f_R=\ell\}$. This graph is acyclic, and so by Euler's formula
	\begin{align*}
		\#\{\text{Components of }\{f_R=\ell\}\}=&\#\{\text{Components of }\{f_R\geq\ell\}\}\\
		&+\#\{\text{Components of }\{f_R\leq\ell\}\}-1
	\end{align*}
	The number of components of $\{f_R=\ell\}$ which intersect $\partial B(R)$ is bounded above by the number of tangent points of $f$ in $\partial B(R)$, and the same bound holds for the components of $\{f_R\geq\ell\}$ and $\{f_R\leq\ell\}$. Therefore we can express the equation above as
	\begin{displaymath}
		N_{LS,R}(f,\ell)=N_{ES,R}(f,\ell)+N_{ES,R}(-f,-\ell)+\eta^{(1)}(\ell)
	\end{displaymath}
	where $\lvert\eta^{(1)}(\ell)\rvert\leq 4 N_\text{tang}(\partial B(R))$. Applying the first part of this lemma to each of the $N_{ES,R}$ terms here then completes the proof.
\end{proof}

For the periodic cases, the argument is a little more technical since we cannot apply Lemma~\ref{l:excursion sets manifolds} to $B(R)$ directly. Instead, we tile $B(R)$ with translated parallelograms or rectangles, apply Lemma~\ref{l:excursion sets manifolds} to $f_\mathbb{T}$ or $f_{\mathcal{C}(n)}$ on each translated domain and aggregate the results.

\begin{proof}[Proof (Lemma~\ref{l:main topological} in the case of periodic functions)]
	Let $f$ be a doubly or singly periodic function satisfying the assumptions of Lemma~\ref{l:main topological}. It suffices to prove the excursion set relationship, since the level set relationship will follow by the same argument as in the aperiodic case.
	
	Suppose $f$ is doubly periodic with periodic vectors $y,z\in\R^2$ and associated parallelogram $P$. By assumption, $f_\mathbb{T}$ is a Morse function almost surely. Suppose that $\{f_\mathbb{T}\geq\ell\}$ has $N_\text{simple}$ simple components.
	
	Let $A$ be a component of $\{f\geq\ell\}$ and $A^\prime$ be the corresponding component of $\{f_\mathbb{T}\geq\ell\}$. If $A$ is compact, then clearly $A^\prime$ cannot contain a non-$\mathbb{T}$-contractible loop, and so it is simple. Since $f|_{\partial P}$ has a finite number of local extrema, $\{f\geq\ell\}\cap\partial P$ has a finite number of components. Therefore if $A$ intersects the boundary of a translated parallelogram $P+n_1y+n_2z$ where $n_1,n_2\in\mathbb{Z}$ then it must contain the translation of one of these boundary components. So if $A$ is unbounded, it must contain a path joining two translated versions of the same boundary component, which implies that $A^\prime$ is not simple.
	
	We can tile $\R^2$ with the translated parallelograms $P+n_1y+n_2z$ where $n_1,n_2\in\mathbb{Z}$. We associate each bounded component $A$ of $\{f\geq\ell\}$ to a particular translation $P+n_1y+n_2z$ of $P$ such that $(P+n_1y+n_2z)\cap A\neq\emptyset$, $N_\text{simple}$ components are mapped to each translated parallelogram and if $A$ is associated with $P$, then $A+n_1y+n_2z$ is associated with $P+n_1y+n_2z$. If $A$ is a component of $\{f\geq\ell\}$ contained in $B(R)$ then it must be associated with a parallelogram which intersects $B(R)$, therefore
	\begin{equation}\label{e:tiling circle 1}
		N_{ES,R}(\ell)\leq N_\text{simple}\cdot\#\{\text{Translations of $P$ in }B(R+d)\}
	\end{equation}
	where $d=\text{diam}(P)$. Similarly, each compact component of $\{f\geq\ell\}$ which is associated with a parallelogram inside $B(R)$ must be in $B(R)$ unless it intersects $\partial B(R)$, but the number of such intersections is bounded above by the number of tangent points of $f$ to $B(R)$. Therefore
	\begin{equation}\label{e:tiling circle 2}
		N_\text{simple}\cdot\#\{\text{Translations of $P$ in }B(R)\}-N_\text{tang}(\partial B(R))\leq N_{ES,R}(\ell)
	\end{equation}
	The number of translated parallelograms contained in the ball of a given radius can be approximated by a generalisation of Gauss' circle problem. It is shown in \cite{lax1982asymptotic} that
	\begin{displaymath}
		\#\{\text{Translations of $P$ in }B(R)\}=\frac{\pi R^2}{\text{Area}(P)}+o(R)
	\end{displaymath}
	as $R\to\infty$. We also note that $N_\text{simple}\leq N_\text{crit}(P)$ since each component must contain
	a critical point. Combining these two facts with \eqref{e:tiling circle 1} and \eqref{e:tiling circle 2}
	\begin{displaymath}
		N_{ES}(R,\ell)=N_\text{simple}\cdot\frac{\pi R^2}{\text{Area}(P)}+O(N_\text{crit}(P)\cdot R+N_\text{tang}(\partial B(R)))
	\end{displaymath}
	as $R\to\infty$.
	
	Now suppose that $\{f_\mathbb{T}\geq \ell\}$ contains $m$ local maxima. Reasoning as above, it is clear that
	\begin{align*}
		m\cdot\#\{\text{Translations of $P$ in }B(R)\}&\leq N_{m^+,R}[\ell,\infty)\\
		&\leq m\cdot\#\{\text{Translations of $P$ in }B(R+d)\}
	\end{align*}
	and so
	\begin{displaymath}
		N_{m^+,R}[\ell,\infty)=m\cdot\frac{\pi R^2}{\text{Area}(p)}+O(N_\text{crit}(P)\cdot R)
	\end{displaymath}
	with an analogous result holding for lower connected saddles. Applying Lemma~\ref{l:excursion sets manifolds} to $f_\mathbb{T}$ (i.e.\ with the setting $M_\infty = M = \mathbb{T}$) then shows that
	\begin{displaymath}
		N_{ES,R}(\ell)=N_{m^+,R}[\ell,\infty)-N_{s^-,R}[\ell,\infty))+O(N_\text{crit}(P)\cdot R+N_\text{tang}(\partial B(R)))
	\end{displaymath}
	as $R\to\infty$ where the constant in the $O(\cdot)$ notation is independent of $\ell$, as required (although it may depend on $d$).
	
	Now suppose that $f$ is singly periodic with periodic vector $y=(y_1,0)$ and define $\mathcal{R}(n)=[ny_1,(n+1)y_1)\times\R$. By assumption $f$ almost surely has a finite number of tangent points in $[0,y_1]\times\{n_1\}$ or $\{n_2y_1\}\times[-R,R]$ for any $n_1,n_2\in\mathbb{Z}$ or $R>0$. Repeating the argument from the doubly periodic case then shows that the compact components of $\{f\geq\ell\}$ correspond precisely to the simple components of $\{f_\mathcal{C}\geq\ell\}$.
	
	As before, we can associate each compact component of $\{f\geq\ell\}$ with a translated rectangle $\mathcal{R}(n)$ for $n\in\mathbb{Z}$, in such a way that if $A$ is a compact component of $\{f\geq\ell\}$ which is mapped to $\mathcal{R}(n)$ then $A$ intersects $\mathcal{R}(n)$, and if $A^\prime=A+y$ then $A^\prime$ maps to $\mathcal{R}(n+1)$.
	
	We recall our earlier definitions: if $S(n_1,n_2)=[n_1y_1,(n_1+1)y_1]\times[n_2,n_2+1]$ where $n_1,n_2\in\mathbb{Z}$, then we define $B_\text{int}(x,R)$ to be the union over $n_1,n_2\in\mathbb{Z}$ of all $S(n_1,n_2)$ contained in $B(x,R)$. Similarly we define $B_\text{ext}(x,R)$ to be the union over $n_1,n_2\in\mathbb{Z}$ of $S(n_1,n_2)$ which intersect $B(x,R)$. We now fix $R>0$ and define $m_n$ for $n\in\mathbb{Z}$ to be the largest integer such that
	\begin{displaymath}
		[ny_1,(n+1)y_1]\times[-m_n,m_n]\subset B(R)
	\end{displaymath}
	and note that under the standard quotient relationship, this rectangle can be identified with $\mathcal{C}(m_n)$.
	
	Let $N_\text{simple}(f_{\mathcal{C}(m_n)},\ell)$ denote the number of simple components of $\{f|_{\mathcal{C}(m_n)}\geq\ell\}$ which do not intersect $\partial\mathcal{C}(m_n)$. By the mapping described above, there will be at least $N_\text{simple}(f_{\mathcal{C}(m_n)},\ell)$ compact components of $\{f\geq\ell\}$ which are associated with $\mathcal{R}(n)$ and intersect $[ny_1,(n+1)y_1]\times[-m_n,m_n]$. If any of these components intersect $\partial B_\text{int}(0,R)$, then they will do so at distinct components of $\{f|_{\partial B_\text{int}(0,R)}\geq\ell\}$ and so in particular the number of such components intersecting the boundary is at most the number of tangent points on $\partial B_\text{int}(0,R)$. Therefore
	\begin{displaymath}
		N_{ES}(R,\ell)\geq\sum_n N_\text{simple}(f_{\mathcal{C}(m_n)},\ell)-N_\text{tang}(\partial B_\text{int}(0,R))
	\end{displaymath}
	We can then apply Lemma~\ref{l:excursion sets manifolds} to $f_{\mathcal{C}(m_n)}$ (i.e.\ with the setting $M_\infty = \mathcal{C}$ and $M = \mathcal{C}(m_n)$) for each $n$ to get this inequality in terms of local maxima and lower connected saddles points. Let $r=\max\{\sqrt{2},y_1\}$. Then since $B_\text{int}(0,R)$ covers $B(R-r)$ we see that
	\begin{align*}
		N_{ES}(R,\ell)\geq& N_{m^+}(R,\ell)-N_{s^-}(R,\ell)-6N_\text{tang}(\partial B_\text{int}(R))-N_\text{crit}(B(R)\backslash B(R-r)).
	\end{align*}
	This gives the required lower bound for $N_{ES}(R,\ell)$. The upper bound follows from a very similar argument. First we define $M_n$ to be the largest integer such that
	\begin{displaymath}
		[ny_1,(n+1)y_1]\times[M_n-1,M_n]\subset B_\text{ext}(0,R)
	\end{displaymath}
	so that $B(R)$ is covered by the finite union of rectangles $[ny_1,(n+1)y_1]\times[-M_n,M_n]$. Then 
	\begin{displaymath}
		N_{ES}(R,\ell)\leq\sum_n N_\text{simple}(f_{\mathcal{C}(M_n)},\ell)
	\end{displaymath}
	and by applying Lemma~\ref{l:excursion sets manifolds} to each cylinder we see that
	\begin{align*}
		N_{ES}(R,\ell)\leq& N_{m^+}(R,\ell)-N_{s^-}(R,\ell)+5N_\text{tang}(\partial B_\text{ext}(R))+N_\text{crit}(B(R+r)\backslash B(R))
	\end{align*}
	as required.
\end{proof}

We next complete the proof of Lemma~\ref{l:no infinite four arm}. Again we shall separate the argument into the aperiodic case and the periodic cases, so that the reader interested only in the aperiodic case can access the simplest version of the argument. Since in this case we work only with the manifolds $\mathbb{R}^2$ and $B( R)$, we replace the condition that an excursion set component $A$ is simple with the equivalent condition that it is bounded.

\begin{proof}[Proof (Lemma~\ref{l:no infinite four arm} in the case of aperiodic fields)]
	Let $f$ be an aperiodic stationary field satisfying Conditions~\ref{conditions main results}. Let $g:\R^2\to\R$ be a Morse realisation of $f$ and assume that $g|_{\overline{B(n)}}$ is Morse for all $n\in\mathbb{N}$. Let $x$ be a lower connected saddle of $g$ at level $\ell$.
	
	The first step is to show that, with probability one, $x$ is not also an upper connected saddle of $g$. For $\epsilon>0$ sufficiently small, let $A_{\ell-\epsilon}$ be the component of $\{g\geq\ell-\epsilon\}$ containing $x$ and $A_{\ell+\epsilon}=A_{\ell-\epsilon}\cap\{g\geq\ell+\epsilon\}$. From the definition of a lower connected saddle point, we have three cases to consider. In the first two cases we use the fact that $g$ is Morse to (deterministically) rule out the possibility of $x$ being upper connected; we show that the third case occurs with zero probability and can therefore be neglected.
	
	\textbf{1)$A_{\ell-\epsilon}$ is unbounded and $A_{\ell+\epsilon}$ has one bounded and one unbounded component.}\\
	Let $S$ denote the bounded component of $A_{\ell+\epsilon}$. We start by choosing $n$ sufficiently large that $\overline{B(n)}$ contains a neighbourhood of $S$ and a neighbourhood of $x$. We let $A^\prime_{\ell-\epsilon}$ be the component of $\{g|_{\overline{B(n)}}\geq\ell-\epsilon\}$ containing $x$ and $A^\prime_{\ell+\epsilon}$ denote $A^\prime_{\ell-\epsilon}\cap\{g\geq\ell+\epsilon\}$. We can apply Theorem~\ref{Morse 2} to $-g$ to deduce that $\overline{B(n)}\cap\{g\leq\ell+\epsilon\}$ is homotopy equivalent to $\overline{B(n)}\cap\{g\leq\ell-\epsilon\}$ with a $1$-cell, denoted $\gamma$, attached. By reducing $\epsilon$ we can ensure that $A^\prime_{\ell+\epsilon}$ has two components, one of which is bounded, and so both end-points of $\gamma$ must be contained in the same component of $\overline{B(n)}\cap\{g\leq\ell-\epsilon\}$. This will in turn be contained in a component of $\{g\leq\ell-\epsilon\}$ which we denote by $B$. Clearly $B\cup\gamma$ is bounded if and only if $B$ is, therefore $\overline{B(n)}\cap\{g\leq\ell-\epsilon\}$ and $\overline{B(n)}\cap\{g\leq\ell+\epsilon\}$ have the same number of bounded components. This holds for any $n$ sufficiently large (possibly after reducing $\epsilon$) but if $x$ were upper connected we could find $m$ large enough that $\overline{B(m)}\cap\{g\leq\ell-\epsilon\}$ has one more bounded component than $\overline{B(m)}\cap\{g\leq\ell+\epsilon\}$ (this argument is stated more formally in the proof of Lemma~\ref{l:excursion sets manifolds}). Therefore $x$ is not an upper connected saddle point.
	
	\textbf{ 2) $A_{\ell-\epsilon}$ is bounded and $A_{\ell+\epsilon}$ has two bounded components.} \\
	The arguments in the previous case are also valid in this case where $S$ is chosen to be either of the two components of $A_{\ell+\epsilon}$.
	
	\textbf{3)$A_{\ell-\epsilon}$ is unbounded and $A_{\ell+\epsilon}$ is bounded.}\\
	We will show that when $f$ is aperiodic, it almost surely has no saddle points of this type. We fix $R>0$ and let $g_R=g|_{B(R)}$ which we assume is Morse. For $y\in\R^2$ we define $A_{y,-\epsilon}$ to be the component of $\{g\geq g(y)-\epsilon\}$ containing $y$ and $A_{y,\epsilon}=A_{y,-\epsilon}\cap\{g\geq g(y)+\epsilon\}$. Let $x_1,\dots,x_n$ be the saddle points of $g_R$ for which $A_{x_i,-\epsilon}$ is unbounded but $A_{x_i,\epsilon}$ is bounded for all $\epsilon>0$ sufficiently small. We then choose a fixed $\epsilon$ sufficiently small that this condition holds for each $i$ and such that $g_R$ has no other critical points and no tangent points in $\{g_R(x_i)-3\epsilon\leq g_R\leq g_R(x_i)+3\epsilon\}$ for any $i$. This ensures the intervals $[g(x_i)-\epsilon,g(x_i)+\epsilon]$ are non-overlapping for $i=1,\dots,n$. Finally we assume that the $x_i$ are ordered so that $g(x_i)<g(x_j)$ for $i<j$.
	
	Since $A_{x_i,-\epsilon}$ is unbounded for each $i$, we see that
	\begin{displaymath}
		B_i^-:=A_{x_i,-\epsilon}\cap\partial B(R)
	\end{displaymath}
	is non-empty for each $i$. Since there are no tangent points with level in $[g_R(x_i)-\epsilon,g_R(x_i)+\epsilon]$, we can repeat the arguments in the proof of Lemma~\ref{l:excursion sets manifolds} to show that
	\begin{displaymath}
		B_i^+:=A_{x_i,\epsilon}\cap\partial B(R)
	\end{displaymath}
	is non-empty and has the same number of components as $B_i^-$ for each $i=1,\dots,n$ (see Figure~\ref{Fig_18}). Suppose there exists a point $y\in B_i^-\cap B_j^-$ for some $i<j$. Since $A_{x_j,-\epsilon}$ is unbounded and path connected, there exists an unbounded path started at $y$ and contained in $\{g\geq g(x_j)-\epsilon)\}$. Since $y\in A_{x_i,\epsilon}$ which is path connected, this path must also be contained in $A_{x_i,\epsilon}$ which is bounded. This contradiction implies that $B_1^-,\dots,B_n^-$ are disjoint. Since each of these sets must have a local maximum, we see that $n\leq N_\text{tang}(\partial B(R))$. Since $f$ is stationary, we know that the expected number of lower connected saddle points $x$ contained in $B(R)$ for which $A_{x,-\epsilon}$ is unbounded and $A_{x,\epsilon}$ is bounded for $\epsilon>0$ sufficiently small is $c_0R^2$ for some $c_0\geq 0$. However by the argument just given, this number is bounded above by the expected number of tangent points of $f$ to $\partial B(R)$ which is $O(R)$ by Conditions~\ref{conditions main results}. Hence $c_0=0$ so $f$ almost surely has no saddle points of this type.
	
	\begin{figure*}[h!]
		\centering
		\input{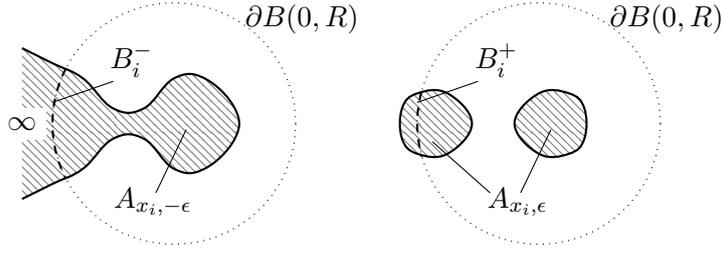}
		\caption{We show that almost surely an unbounded excursion set component of $f$ cannot shrink into a bounded excursion set by passing through a saddle point.}\label{Fig_18}
	\end{figure*}
	
	We have shown that the set of lower connected saddles of $f$ is almost surely disjoint from the set of upper connected saddles. Using the same notation as above, we will now show that any saddle point of $g$ must be lower connected or upper connected, completing the proof of the lemma.
	
	For a small neighbourhood $N$ of $x$, $\{g>\ell\}\cap N$ and $\{g<\ell\}\cap N$ each have two components. Since $g$ is Morse, it has no other critical points at level $\ell$ and so the level set $\{g=\ell\}$ consists of non-intersecting curves except for the component which contains $x$. Since $g$ has no infinite-four-arm saddles there are two mutually exclusive possibilities to consider; either there exists a path in $\{g<\ell\}$ joining the two components of $\{g<\ell\}\cap N$ or there exists a path in $\{g>\ell\}$ joining the two components of $\{g>\ell\}\cap N$.
	
	We now show that in the first case $x$ will be lower connected. By symmetry, this will imply that in the second case, $x$ is upper connected and so complete the proof for aperiodic fields. We fix a path $\gamma$ in $\{g<\ell\}$ connecting the components of $\{g<\ell\}\cap N$. From the existence of $\gamma$ it is clear that $A_\ell\backslash\{x\}$ has two components $B_1,B_2$ and that without loss of generality, $B_1$ is bounded (see Figure~\ref{Fig_19}).
	
	\begin{figure*}[h!]
		\centering
		\input{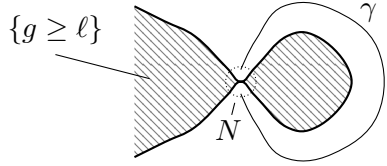}
		\caption{The existence of the path $\gamma$ implies that $x$ is a lower connected saddle point.}
		\label{Fig_19}
	\end{figure*}
	
	For $\epsilon>0$ sufficiently small, $A_{\ell+\epsilon}$ therefore has at least one bounded component $B_1\cap\{g\geq\ell+\epsilon\}$. If $A_{\ell-\epsilon}$ is compact for some $\epsilon>0$ then clearly $B_2$ is also bounded and so $A_{\ell+\epsilon}$ has two bounded components so $x$ is lower connected. If $A_{\ell-\epsilon}$ is unbounded for arbitrarily small $\epsilon$ we can assume that $A_{\ell+\epsilon}$ has an unbounded component, by the argument given above, and so $x$ is again lower connected. We note that this also proves the equivalence of Definition~\ref{d:lower connected general} in the aperiodic case and Definition~\ref{d:lower connected aperiodic}.
\end{proof}

\begin{proof}[Proof (Lemma~\ref{l:no infinite four arm} in the case of periodic fields)] The arguments here are similar to those in the aperiodic case (i.e.\ replacing the condition of boundedness with the more general condition of simplicity). Let $f$ be a stationary field satisfying Conditions~\ref{conditions main results}.
	Let $g:M\to\R$ be a Morse realisation of $f$ restricted to $M$, where $M$ is one of $\mathbb{T}$ or $\mathcal{C}$ depending on whether $f$ is doubly periodic or singly periodic. We also assume that $g|_{\mathcal{C}(n)}$ is Morse for all $n\in\mathbb{N}$ in the singly periodic case. Let $x$ be a lower connected saddle of $g$ at level $\ell$.
	
	Again the first step is to show that, with probability one, $x$ is not an upper connected saddle of $g$. For $\epsilon>0$ sufficiently small, let $A_{\ell-\epsilon}$ be the component of $\{g\geq\ell-\epsilon\}$ containing $x$ and $A_{\ell+\epsilon}=A_{\ell-\epsilon}\cap\{g\geq\ell+\epsilon\}$. Again we have three cases to consider, the first two of which are proven deterministically:
	
	\textbf{1)$A_{\ell-\epsilon}$ is not simple and $A_{\ell+\epsilon}$ has one simple and one non-simple component.}\\
	Let $S$ denote the simple component of $A_{\ell+\epsilon}$. If $f$ is singly periodic, we set $M_c=\mathcal{C}(n)$ where $n$ is sufficiently large that $M_c$ contains a neighbourhood of $S$ and a neighbourhood of $x$. If $f$ is doubly periodic we take $M_c=M=\mathbb{T}$. We define $A^\prime_{\ell-\epsilon}$ to be the component of $\{g|_{M_c}\geq\ell-\epsilon\}$ containing $x$ and $A^\prime_{\ell+\epsilon}:=A^\prime_{\ell-\epsilon}\cap\{g|_{M_c}\geq\ell+\epsilon\}$. By Theorem~\ref{Morse 2} applied to $-g$ we deduce that $M_c\cap\{g\leq\ell+\epsilon\}$ is homotopy equivalent to $M_c\cap\{g\leq\ell-\epsilon\}$ with a $1$-cell, denoted $\gamma$, attached. By reducing $\epsilon$ we can ensure that $A^\prime_{\ell+\epsilon}$ has two components, one of which is simple, and so both end-points of $\gamma$ must be contained in the same component of $M_c\cap\{g\leq\ell-\epsilon\}$. This will then be contained in a component of $\{g\leq\ell-\epsilon\}$ which we denote by $B$. If $B$ is not simple, then clearly $B\cup\gamma$ is not simple. Suppose that $B$ is simple but $B\cup\gamma$ is not simple, then $B\cup\gamma$ must contain a loop, denoted $\eta$, which is not $M$-contractible and in particular $\eta$ must intersect $\gamma$. However $B$ is path connected and $B\cup\gamma$ `surrounds' a region which is homotopy equivalent to $S$ and so must be simple (See Figure~\ref{Fig_20} for an example of these sets in the doubly periodic case).
	
	\begin{figure*}[h!]
		\centering
		\input{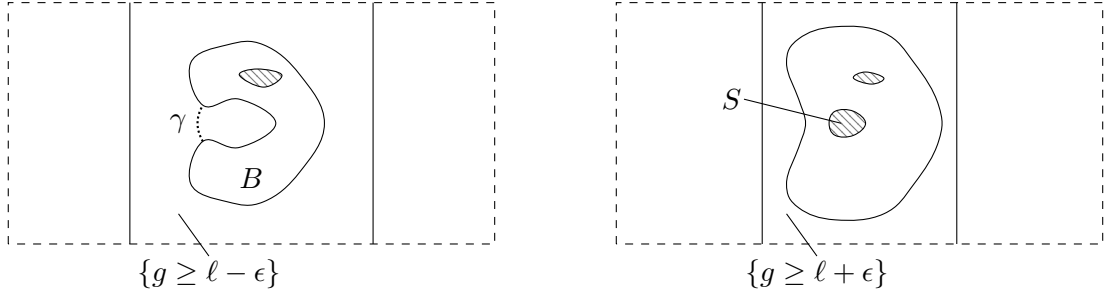}
		\caption{An example of the excursion sets at a level below (left) and above (right) a saddle point at level $\ell$ for which $A_{\ell-\epsilon}$ is not simple and $A_{\ell+\epsilon}$ has one simple and one non-simple component when $M=\mathbb{T}$.}
		\label{Fig_20}
	\end{figure*}
	
	Therefore $\eta$ is homotopy equivalent to a loop which is contained in $B$ and so $B$ is not simple, which is a contradiction. We have shown that $B\cup\gamma$ is simple if and only if $B$ is, therefore $M_c\cap\{g\leq\ell-\epsilon\}$ and $M_c\cap\{g\leq\ell+\epsilon\}$ have the same number of simple components. This holds for any $M_c$ sufficiently large (possibly after reducing $\epsilon$) but if $x$ were upper connected we could find $M_c$ such that $M_c\cap\{g\leq\ell-\epsilon\}$ has one more simple component than $M_c\cap\{g\leq\ell+\epsilon\}$ (again, we note that this argument is stated more formally in the proof of Lemma~\ref{l:excursion sets manifolds}). We conclude that $x$ is not an upper connected saddle point.
	
	\textbf{ 2) $A_{\ell-\epsilon}$ is simple and $A_{\ell+\epsilon}$ has two simple components.} \\
	Once again, the arguments in the previous case are also valid in this case where $S$ is chosen to be either of the two components of $A_{\ell+\epsilon}$.
	
	\textbf{3)$A_{\ell-\epsilon}$ is not simple and $A_{\ell+\epsilon}$ is simple (in particular, connected).}\\
	We can repeat the argument given in this section of the proof for aperiodic fields to show that if $f$ is singly periodic then $A_{\ell-\epsilon}$ must be bounded. Clearly in the doubly periodic case, $A_{\ell-\epsilon}\subset\mathbb{T}$ must also be bounded. We therefore choose a compact domain $M_c\subset M$ of the form $\mathbb{T}$ or $\mathcal{C}(n)$ for some $n$ which contains a neighbourhood of $A_{\ell-\epsilon}$. Let $B_1,\dots,B_n$ denote the components of $\{g\leq\ell+\epsilon\}\cap M_c$ which intersect $A_{\ell+\epsilon}$. Since $A_{\ell+\epsilon}$ is simple, we know that at most one of these sets is not simple and the remainder are. (One of the sets will `surround' $A_{\ell+\epsilon}$ which in turn will `surround' the remaining sets.) Without loss of generality, we assume $B_1$ is the `surrounding' set, which may not be simple. By Theorem~\ref{Morse 2}, $A_{\ell-\epsilon}$ is homotopy equivalent to $A_{\ell+\epsilon}$ with a $1$-cell, denoted $\gamma$ attached. Clearly $\gamma$ is contained in $B_1$ and $B_1$ is not simple, otherwise $A_{\ell+\epsilon}\cup\gamma$ would be simple. If $M=\mathcal{C}$ then $B_1\backslash\gamma$ has two components and each of these components contains a loop which is not $\mathcal{C}$-contractible (since $A_{\ell+\epsilon}\cup\gamma$ is compact and so separated from $\infty$ and $-\infty$ by $B_1\backslash\gamma$). If $M=\mathbb{T}$ then $B_1\backslash\gamma$ may have one or two components, in either case each component will contain a non-$\mathbb{T}$-contractible loop (to see this, consider a path on either side of $\gamma$ which then traverses the boundary of $A_{\ell+\epsilon}$). This means that passing through the saddle point $x$ can only create non-simple components of $\{g\leq\ell-\epsilon\}$, so $x$ is not upper connected.
	
	This completes the proof that the sets of upper and lower connected saddle points of $f$ are almost surely disjoint. We continue to use the notation defined above, and we now show that any saddle point of $g$ must be lower connected or upper connected. As in the aperiodic case, Lemma~\ref{l:four arm} and Conditions~\ref{conditions main results} allow us to conclude that $f$ almost surely has no infinite four-arm saddles.
	
	Suppose that $f$ is doubly periodic so that $g$ is defined on the torus $\mathbb{T}$. We fix a small neighbourhood $N$ of $x$ such that $\{g>\ell\}\cap N$ has precisely two components denoted $N_1,N_2$. If $\gamma:[0,1]\to\mathbb{T}$ is a path contained in $\{g>\ell\}\cup\{x\}$, we say that $\gamma$ \textit{cuts through} $x$ if $x\in\gamma[0,1]$ and for every $t\in(0,1)$ such that $\gamma(t)=x$, for $\epsilon>0$ sufficiently small $\gamma((t-\epsilon,t))\subset N_1$ and $\gamma((t,t+\epsilon))\subset N_2$ (intuitively, this means that the image of $\gamma$ just before hitting $x$ is always on the same side of the saddle point; see Figure~\ref{Fig_21}). We make an analogous definition for paths contained in $\{g<\ell\}\cup\{x\}$.
	
	\begin{figure*}[h!]
		\centering
		\input{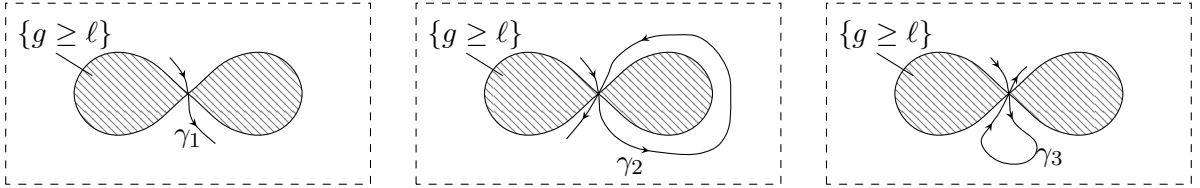}
		\caption{The paths $\gamma_1$ and $\gamma_2$ cut through $x$ but $\gamma_3$ does not.}
		\label{Fig_21}
	\end{figure*}
	
	First we suppose that there exists a $\mathbb{T}$-contractible loop $\gamma$ in $\{g<\ell\}\cup\{x\}$ which cuts through $x$. Since $x$ is a saddle point, it is clear that $A_{\ell+\epsilon}$ must have two components, one of which is surrounded by $\gamma$ and so is simple. If $A_{\ell-\epsilon}$ is simple, then both components of $A_{\ell+\epsilon}$ must be simple. If $A_{\ell-\epsilon}$ is not simple, we know from Lemma~\ref{l:excursion sets manifolds} that $A_{\ell+\epsilon}$ can contain at most one simple component. In either case, we see that $x$ is lower connected. By symmetry, $x$ is upper connected if there exists a $\mathbb{T}$-contractible loop in $\{g>\ell\}\cup\{x\}$ cutting through $x$.
	
	Now we suppose that all loops cutting through $x$ are non-$\mathbb{T}$-contractible. We define $C_+$ to be the union of $\{x\}$ and the components of $\{g>\ell\}$ which $x$ is in the closure of. We define $C_-$ to be the analogous set for $\{g<\ell\}$. At least one of $C_+$ or $C_-$ must contain a non-contractible loop cutting through $x$ (in order to stop any paths in $C_+$ cutting through $x$ from joining to form a loop, there must be a loop in $\mathbb{T}\backslash C_+$ cutting through $x$ which blocks them, and since $x$ is the only critical point at level $\ell$, such a `blocking' loop can be found in $C_-$). We also note that both $C_+$ and $C_-$ must contain non-$\mathbb{T}$-contractible loops (if $C_+$ was simple, then we could find a $\mathbb{T}$-contractible loop in $C_-$ cutting through $x$, and this argument is symmetric).
	
	Suppose that $\gamma_1$ is a loop contained in $C_+$ which cuts through $x$. If every non-$\mathbb{T}$-contractible loop in $C_+$ intersects $x$, then $C_+\backslash\{x\}$ is simple, so $A_{\ell+\epsilon}$ is simple and hence $x$ is lower connected. Therefore we may assume that $C_+$ contains a non-$\mathbb{T}$-contractible loop $\gamma_2$ which does not intersect $x$. Let $\eta_1$ and $\eta_2$ be two loops which generate the fundamental group of $\mathbb{T}$, we denote the concatenation of $n$ copies of $\eta_1$ and $m$ copies of $\eta_2$ by $n\eta_1+m\eta_2$. We suppose $\gamma_1\simeq n_1\eta_1+m_1\eta_2$ and $\gamma_2\simeq n_2\eta_1+m_2\eta_2$ where $\simeq$ denotes being $\mathbb{T}$-homotopic. It is known (see \cite[Section 1.2.3]{farb2011primer}) that any loop homotopic to $n_1\eta_1+m_1\eta_2$ must intersect any loop homotopic to $n_2\eta_1+m_2\eta_2$ unless $n_1m_2=n_2m_1$.
	
	If $n_1m_2\neq n_2m_1$ then we know that any non-contractible loop in $C_-$ must intersect either $\gamma_1$ or $\gamma_2$. Since $C_-\cap C_+=\{x\}$ clearly such a loop must intersect $\gamma_1$ at $x$. By the previous paragraph this means that $C_-\backslash\{x\}$ is simple and so $x$ is upper connected.
	
	If $n_1m_2=n_2m_1$ then by choosing a path $\xi$ in $C_+\backslash\{x\}$ which joins $\gamma_1$ to $\gamma_2$ we consider the concatenated loop
	\begin{displaymath}
		n_2\gamma_1+\xi+ n_1(-\gamma_2)+(-\xi)
	\end{displaymath}
	where $-$ denotes inverting the direction of the path. This path is contained in $C_+$ and cuts through $x$ (since $\gamma_1$ does but $\gamma_2$ and $\xi$ do not hit $x$) and since $n_1m_2=n_2m_1$ this loop is $\mathbb{T}$-contractible. However this contradicts the above supposition, so this case is not possible. This completes the proof in the doubly periodic case. The proof in the singly periodic case is simply a repetition of parts of the proof for aperiodic and doubly periodic fields, so we omit it.
\end{proof}

All that remains is to complete the calculations for the special class of degenerate fields. 

\begin{proof}[Proposition~\ref{p:cilleruello c_NS}]
	We recall that a random variable $Y$ is Rayleigh distributed with parameter $\sigma>0$ if
	\begin{displaymath}
		\mathbb{P}(Y\leq x)=1-e^{-x^2/(2\sigma^2)}
	\end{displaymath}
	for all $x\geq 0$, and we denote this as $Y\sim\text{Ray}(\sigma)$. If $Y\sim\text{Ray}(\sigma)$ and $\theta$ is an independent uniform-$[0,2\pi]$ random variable then it is well known that $Y\cos(\theta)\sim\mathcal{N}(0,\sigma^2)$.
	
	Let $f$ be the Gaussian field with spectral measure
	\begin{displaymath}
		\rho=\alpha\delta_0+\frac{\beta}{2}(\delta_K+\delta_{-K})+\frac{\gamma}{2}(\delta_L+\delta_{-L})
	\end{displaymath}
	where $\alpha+\beta+\gamma=1$ so that the covariance function of $f$ is
	\begin{displaymath}
		\kappa(x)=\alpha+\beta\cos(2\pi K\cdot x)+\gamma\cos(2\pi L\cdot x).
	\end{displaymath}
	Then $f$ has the representation
	\begin{equation}\label{e:rayleigh representation}
		f(x)=X_0+Y_1\cos(2\pi K\cdot x+\theta_1)+Y_2\cos(2\pi L\cdot x+\theta_2)
	\end{equation}
	where $X_0\sim\mathcal{N}(0,\alpha)$, $Y_1\sim\text{Ray}(\sqrt{\beta})$, $Y_2\sim\text{Ray}(\sqrt{\gamma})$, the $\theta_i$ are uniformly distributed on $[0,2\pi]$ and all of these random variables are independent. (By well known properties of the Rayleigh distribution, the field defined by the right hand side of \eqref{e:rayleigh representation} will have Gaussian finite dimensional distributions and simple calculations show that the covariance function of this field is $\kappa$.)
	
	Let 
	\begin{displaymath}
		u=\frac{1}{K_1L_2-K_2L_1}\begin{pmatrix}
			L_2\\
			-L_1
		\end{pmatrix}
		\qquad
		v=\frac{1}{K_1L_2-K_2L_1}\begin{pmatrix}
			K_2\\
			-K_1
		\end{pmatrix}
	\end{displaymath}
	and $P=\{tu+sv:t,s\in[0,1]\}$. Then $P$ is the parallelogram associated with $f$ and we consider $f_\mathbb{T}$ to be the restriction of $f$ to $P$ when $P$ is identified with the two dimensional torus. By rotating the axes, we may assume that $K_1>0$ (since this does not affect the definition of $c_{LS}$). Some basic calculations show that, on the event
	\begin{displaymath}
		\{Y_1\neq 0\}\cap\{Y_2\neq 0\}\cap \left\{Y_1\pm Y_2\neq 0\right\},
	\end{displaymath}
	$f_\mathbb{T}$ has four critical points which occur at different levels and are all non-degenerate. Therefore $f_\mathbb{T}$ is almost surely a Morse function. Moreover, the critical points of $f_\mathbb{T}-X_0$ occur at levels $-Y_1-Y_2,-\lvert Y_1-Y_2\rvert,\lvert Y_1-Y_2\rvert,Y_1+Y_2$. Clearly the critical points at levels $Y_1+Y_2$ and $-Y_1-Y_2$ are a local maximum and local minimum respectively. We can use Lemma~\ref{l:excursion sets manifolds} to characterise the other two critical points and the number of simple components of $\{f_\mathbb{T}-X_0\geq\ell\}$, denoted $N_\text{simple}(\ell)$, at different levels. Specifically, since $\{f_\mathbb{T}-X_0> Y_1+Y_2\}=\emptyset$ and $\{f_\mathbb{T}-X_0\geq -Y_1-Y_2\}=\mathbb{T}$, we see that $N_\text{simple}(\ell)=0$ whenever $\lvert\ell\rvert>Y_1+Y_2$. Then, since the critical points at level $Y_1+Y_2$ and $-Y_1-Y_2$ must be a local maximum and local minimum respectively, applying Lemma~\ref{l:excursion sets manifolds} shows that $N_\text{simple}(\ell)$ does not change as $\ell$ passes through $-Y_1-Y_2$ and decreases by one as $\ell$ passes through $Y_1+Y_2$. Therefore
	\begin{displaymath}
		N_\text{simple}(\ell)=\begin{cases}
			0, &\text{if }\ell>Y_1+Y_2,\\
			1, &\text{if }\ell\in(\lvert Y_1-Y_2\rvert,Y_1+Y_2),\\
			0, &\text{if }\ell\in(-Y_1-Y_2,-\lvert Y_1-Y_2\rvert),\\
			0, &\text{if }\ell<-Y_1-Y_2.
		\end{cases}
	\end{displaymath}
	Now assume that $\theta_1=\theta_2=0$. Then by traversing the parallelogram $P$ across one of its edges (depending on which of $Y_1,Y_2$ is bigger) we can find a closed path which is not $\mathbb{T}$-contractible, on which $f_\mathbb{T}-X_0$ is bounded below by $\lvert Y_1-Y_2\rvert$, so in particular is positive. This shows that $\{f_\mathbb{T}-X_0\geq 0\}$ has a non-simple component (for general values of $\theta_1,\theta_2$, such a path will also exist, but it will be translated). Since $\{f_\mathbb{T}-X_0\geq \lvert Y_1-Y_2\rvert\}$ consists of a single simple component, this implies that the critical point at level $\lvert Y_1-Y_2\rvert$ must be a lower connected saddle point. Then by Lemma~\ref{l:excursion sets manifolds} we see that
	\begin{displaymath}
		N_\text{simple}(\ell)=\begin{cases}
			1, &\text{if }\ell\in(\lvert Y_1-Y_2\rvert,Y_1+Y_2],\\
			0, &\text{otherwise.}
		\end{cases}
	\end{displaymath}
	It is then clear that
	\begin{displaymath}
		p_{m^+}(x)=\frac{1}{\text{Area}(P)}p_{X_0+Y_1+Y_2}(x)\quad \text{and} \quad p_{s^-}(x)=\frac{1}{\text{Area}(P)}p_{X_0+\lvert Y_1-Y_2\rvert}(x) .
	\end{displaymath}
	Arguments given in the proof of Lemma~\ref{l:main topological} for periodic functions show that
	\begin{displaymath}
		N_{ES,R}(\ell)=\id_{\ell-X_0\in(\lvert Y_1-Y_2\rvert,Y_1+Y_2]}\cdot\frac{\pi R^2}{\text{Area}(P)}+O(R+N_\text{tang}(\partial B(R))) 
		.\end{displaymath}
	Arguing as in the proof of Lemma~\ref{Gaussian fields main theorem} it can be shown that $\mathbb{E}(N_\text{tang}(\partial B(R))=O(R)$ as $R\to\infty$. Then for $\ell\in\R$
	\begin{align*}
		\mathbb{E}\left(\left\lvert\frac{N_{ES,R}(\ell)}{\pi R^2}-\frac{1}{\text{Area}(P)}\id_{\ell-X_0\in(\lvert Y_1-Y_2\rvert,Y_1+Y_2]}\right\rvert\right)=O\left(\frac{1}{R}\right) ,
	\end{align*}
	so that $N_{ES,R}(\ell)/(\pi R^2)$ converges in $L^1$ to the random variable $\frac{1}{\text{Area}(P)}\id_{\ell-X_0\in(\lvert Y_1-Y_2\rvert,Y_1+Y_2]}$ which is non-deterministic provided $\alpha>0$ or $\ell>0$. In particular, this convergence shows that
	\begin{align*}
		c_{ES}(\ell):=&\lim_{R\to\infty}\frac{1}{\pi R^2}\mathbb{E}(N_{ES,R}(\ell))
		=\frac{1}{\text{Area}(P)}\mathbb{P}(\ell-X_0\in(\lvert Y_1-Y_2\rvert,Y_1+Y_2]) .
	\end{align*}
	We can repeat these arguments for the sum of upper and lower excursion sets to derive the corresponding expression for $c_{LS}(\ell)$.
\end{proof}

\bigskip
\printbibliography

\end{document}